\theoremstyle{plain}
\newtheorem{Theorem}{Theorem}[section]
\newtheorem{Lemma}{Lemma}[section]
\newtheorem{Proposition}{Proposition}[section]
\theoremstyle{definition}
\theoremstyle{Remark}
\newtheorem{Remark}{Remark}[section]
\numberwithin{equation}{section}
\ifpdf \usepackage[colorlinks=true, citecolor=blue, linkcolor=blue, urlcolor=blue]{hyperref} \fi
\newcommand{\ga}{\gamma}
\newcommand{\Om}{\Omega}
\newcommand{\eps}{\epsilon}
\newcommand{\va}{\varepsilon}
\newcommand{\IT}{\int_0^t}
\newcommand{\IX}{\int_0^\xi}
\newcommand{\IXX}{\int_{\mathbb{R}_+}}
\begin{document}
\title[The non-viscosity Navier-Stokes equations]
{ Asymptotic stability   for the inflow problem of
the heat-conductive ideal gas  without viscosity}

\author{Meichen Hou$^{1,2}$}
\address{1. School of mathematical sciences, University of Chinese Academy of Sciences, Beijing,
100049, China.\\
   2. Institute of Applied Mathematics, AMSS, Beijing 100190, China}
\email{meichenhou@amss.ac.cn}

\author{Lili Fan$^{3}$}
\address{3. School of Mathematics and Computer Science, Wuhan Polytechnic University, Wuhan 430023, China}
\email{fll810@live.cn}

\date{} \maketitle

%--------abstract----------------------------------------------------------------------------------------

\begin{abstract}
This paper is devoted to  studying
 the inflow problem for an ideal polytropic model with non-viscous gas in one-dimensional half space. We showed the existence of the boundary layer in different areas. By employing the energy method, we also proved the
unique global-in-time solution existed and the  asymptotic stability of both the boundary layer and  the superposition with
 the $3-$rarefaction wave under some smallness conditions.

\bigbreak
 \noindent {\bf Keywords}: non-viscous; inflow problem; boundary layer;
 rarefaction wave.
\end{abstract}

\tableofcontents

\section{Introduction }
\setcounter{equation}{0}

In this paper, we consider  the system of heat-conductive ideal gas without viscosity in one-dimensional:
\begin{equation}\label{1.1}
\left\{
\begin{aligned}
&\rho_t+(\rho u)_x=0,\\
&(\rho u)_t+(\rho u^2+p)_x=0,\\
&(\rho(e+\frac{u^2}{2}))_t+(\rho u(e+\frac{u^2}{2})+pu)_x=\kappa\theta_{xx},\\
\end{aligned}
\right.
\end{equation}
where $x\in\mathbb{R}_+,t>0$ and
 $\rho(t,x)>0,u(t,x),\theta(t,x)>0,e(t,x)>0$ and $p(t,x)>0$
 are density, fluid velocity, absolute temperature, internal energy, and pressure respectively, while $\kappa>0$ is the coefficient of the heat conduction. Here we study the ideal and polytropic fluids so that $p$ and $e$ are given by the state equations
\begin{equation}\label{1.2}
p=R\rho\theta=A\rho^\ga\exp(\frac{\ga-1}{R}s),\quad e=C_v\theta
\quad( C_v=\frac{R}{\ga-1}),
\end{equation}
where $s$ is the entropy, $\ga>1$ is the adiabatic exponent and A,R are both positive constants.
 The
solution of (\ref{1.1})  satisfies the following initial data and the far field states that
\begin{equation}\label{1.3}
\left\{
\begin{aligned}
&(\rho,u,\theta)(0,x)=(\rho_0,u_0,\theta_0)(x), \\
&(\rho,u,\theta)(t,+\infty)=(\rho_+,u_+,\theta_+)=:z_+,
\end{aligned}
\right.
\end{equation}
where $\inf_{x\in \mathbb{R}_+}(\rho_0,\theta_0)(x)>0$ and $\rho_+>0, u_+, \theta_+>0$ are given constants.

As far as we know, there are very few results on the well-posed
problem for (1.1) due to the complexity and nonlinearity. Almost all the results are related to the analysis of
the global in time stability of the viscous Riemann solutions. More precisely, if
the heat effect  is also neglected, the Riemann solution consists of elementary waves
such as shock waves, rarefaction waves and contact discontinuities, which are dilation
invariant solutions of the Riemann problem (Euler system):
\begin{equation}\label{1.4}
\left\{
\begin{aligned}
&\rho_t+(\rho u)_x=0,\\
&(\rho u)_t+(\rho u^2+p)_x=0,\\
&\{\rho (e +\frac{u^{2}}{2})\}_t+\{\rho u(e +\frac{u^{2}}{2})+pu\}_x=0.
\end{aligned}
\right.
\end{equation}
The sound speed and Mach number  are
\begin{equation}\label{1.5}
c_s(\theta):=\sqrt{\frac{\gamma p}{\rho}}=\sqrt{ \gamma R \theta},\quad  M(\rho,u,\theta):=\frac{|u|}{{ c_s(\theta)}}.
\end{equation}
Then the inviscid Euler system (\ref{1.4}) has three characteristic speeds,
 they are
\begin{eqnarray}\label{1.6}
\lambda_1=u-c_s(\theta),\quad \lambda_2=u,\quad \lambda_3=u+c_s(\theta).
\end{eqnarray}
The system (\ref{1.4}) is  a typical example of the hyperbolic conservation laws,
it is of great importance to study the corresponding viscous system, such as isentropic or non-isentropic case. There are many works on the large-time behavior of the solutions to the Cauchy problem of the compressible Navier-Stokes equations. We refer to (\cite{F-M},\cite{Huang-L-M}, \cite{Huang-Matsumura},\cite{Huang-Matsumura-Xin},
\cite{H-T-X},\cite{T.P.Liu-1},
\cite{N-T-Z},\cite{Wang-Zhao}) and some references therein.

  Many authors also studied the initial boundary value problem
for the viscous and heat-conductive gas, which is modelled by
\begin{equation}\label{1.7}
\left\{
\begin{aligned}
&\rho_t+(\rho u)_x = 0, \\
&(\rho u)_t+(\rho u^2+p)_x = \mu u_{xx}, \\
&(\rho (e+\frac{u^2}{2}))_t+(\rho u(e+\frac{u^2}{2})+pu)_x = \kappa\theta_{xx}+(\mu uu_x)_x,
\end{aligned}
\right.
\end{equation}
where $\mu>0$ stands for the coefficients of viscosity. For the system (\ref{1.7}), we divide the phase space into following regions:
\begin{eqnarray*}
 \begin{aligned}
& {\Omega}_{sub}^+:=\left\{(\rho,u,\theta);\ 0<u<{c}_s(\theta)\right\},\quad
{\Omega}_{sub}^-:=\left\{(\rho,u,\theta);\ -{c}_s(\theta)<u<0\right\},\\
&{\Omega}^+_{supper}:=\left\{(\rho,u,\theta);\ u>{c}_s(\theta)\right\},\quad
{\Omega}^-_{supper}:=\left\{(\rho,u,\theta);\ u<-{c}_s(\theta)\right\},\\
&{\Gamma}^\pm_{trans}:=\left\{(\rho,u,\theta);\ |u|={c}_s(\theta)\right\},\quad
{\Gamma}_{sub}^0:=\left\{(\rho,u,\theta);\ u=0\right\}.
 \end{aligned}
 \end{eqnarray*}
For the inflow problem of (\ref{1.7}), Huang-Li-Shi \cite{Huang-L-S} studied the asymptotic stability of
 boundary layer  and its superposition with $3-$rarefaction wave.  Nakamura-Nishibata\cite{Naka-Nishi5} proved the existence and stability of boundary layer solution of (\ref{1.7}) in half space. Qin-Wang (\cite{Qin-Wang},\cite{Qin-Wang-2011}) proved the stability of
 the combination of BL-solution, rarefaction wave and viscous contact wave. For other interesting works, we refer to (\cite{F-H-W-Z}, \cite{F-X-R-2018}, \cite{H-M-S-1},\cite{H-M-S},\cite{Huang-Qin},
\cite{F-Z}-\cite{Kawashima-Zhu},\cite{Matsumura-Mei}-\cite{Nakamura},\cite{Nakamura2},\cite{Qin},\cite{Wan-Wang-Zou}).

Therefore, there is a natural question that how about the asymptotic stability of the
composite wave consisting of the boundary layer  and  $3-$rarefaction wave for the initial boundary value problems of the
non-viscous system (\ref{1.1}). We will give a positive answer to this problem
in this paper.  To do this,  we should define  proper boundary conditions.
Thus, we change the system (\ref{1.1}) in an equivalent form as
\begin{equation}\label{1.8}
\left\{
\begin{aligned}
&\rho_t+u\rho_x+\rho u_x=0,\\
& u_t+uu_x+\frac{p}{\rho^2}\rho_x=-R\theta_x,\\
&C_v\theta_t -\frac{\kappa }{\rho}\theta_{xx}=-C_v u \theta_x-R\theta u_x,
\end{aligned}
\right.
\end{equation}
then the two eigenvalues of the hyperbolic part are
\begin{equation}\label{1.9}
\widetilde{\lambda}_1=u-\widetilde{c}_s(\theta), \quad \widetilde{\lambda}_2=u+\widetilde{c}_s(\theta),
\end{equation}
 where
 \begin{equation}\label{1.10}
\widetilde{c}_s(\theta):=\sqrt{\frac{ p}{\rho}}=\sqrt{  R \theta}.
\end{equation}
Denote
\begin{equation}\label{1.11}
M_+=\frac{|u_+|}{\sqrt{\gamma R\theta_+}},\quad \tilde{M}(\rho,u,\theta)=
\frac{|u|}{\sqrt{R\theta}},\quad\tilde{M}_+=\frac{|u_+|}{\sqrt{R\theta_+}}
\end{equation}
for clear expression later.

By \cite{MPPT}, the boundary conditions of (\ref{1.1}) depend on the sign of $\widetilde{\lambda}_1$ and  $\widetilde{\lambda}_2$.
We consider that the global solution of (\ref{1.1}) is  in a small neighborhood
$\Omega(z_+)$ of $z_+$,  such that $\widetilde{\lambda}_i(i=1,2)$ at the boundary $x=0$ keeps the same sign with  $\widetilde{\lambda}_i(i=1,2)$ at the far field $x=+\infty,$
which are determined by the right state $z_+.$
Hence, we divide the phase space into new  regions
\begin{eqnarray*}
\begin{aligned}
   & \widetilde{\Omega}^+_{sub}:=\left\{(\rho,u,\theta);\ 0<u<\widetilde{c}_s(\theta)\right\},\quad
     \widetilde{\Omega}^-_{sub}:=\left\{(\rho,u,\theta);\ -\widetilde{c}_s(\theta)<u<0\right\};\\[2mm]
     & \widetilde{\Omega}^+_{supper}:=\left\{(\rho,u,\theta);\ u>\widetilde{c}_s(\theta)\right\},\quad
     \widetilde{\Omega}^-_{supper}:=\left\{(\rho,u,\theta);\ u<-\widetilde{c}_s(\theta)\right\};\\[2mm]
    & \widetilde{\Gamma}^+_{trans}:=\left\{(\rho,u,\theta);\ u=\widetilde{c}_s(\theta)\right\},\quad
    \widetilde{\Gamma}^-_{trans}:=\left\{(\rho,u,\theta);\ u=-\widetilde{c}_s(\theta)\right\},\\[2mm]
    &\widetilde{\Gamma}^0_{sub}:=\left\{(\rho,u,\theta);\ u=0\right\}.
    \end{aligned}
   \end{eqnarray*}
Then the boundary conditions are listed as follows:\\
Case (1):\ If  $z_+=(\rho_+,u_+,\theta_+) \in  \widetilde{\Omega}^-_{supper}$,  in the neighborhood of $U(z_+)$, $\widetilde{\lambda}_1<0$,
$\widetilde{\lambda}_2<0$, the boundary condition of $(\ref{1.1})$ is
\begin{eqnarray}\label{1.12}
\theta(t,0)=\theta_-.
\end{eqnarray}
Case (2):\ If  $z_+=(\rho_+,u_+,\theta_+) \in  \widetilde{\Omega}_{sub}^-\bigcup\widetilde{\Omega}_{sub}^+
\bigcup \widetilde{\Gamma}_{sub}^0$,  in the neighborhood of $U(z_+)$, $\widetilde{\lambda}_1<0$,
$\widetilde{\lambda}_2>0$, the boundary condition of $(\ref{1.1})$ is
 \begin{eqnarray}\label{1.13}
u(t,0)=u_-,\quad \theta(t,0)=\theta_-.
\end{eqnarray}
Case (3):\ If  $z_+=(\rho_+,u_+,\theta_+) \in  \widetilde{\Omega}^+_{supper}$,  in the neighborhood of $U(z_+)$, $\widetilde{\lambda}_1>0$,
$\widetilde{\lambda}_2>0$, the boundary condition of $(\ref{1.1})$ is
 \begin{eqnarray}\label{1.14}
\rho(t,0)=\rho_-,\quad u(t,0)=u_-,\quad \theta(t,0)=\theta_-.
\end{eqnarray}

 \begin{figure}[htp]\label{figuer 1}
 \centering
\includegraphics[width=4in,height=2.5in]{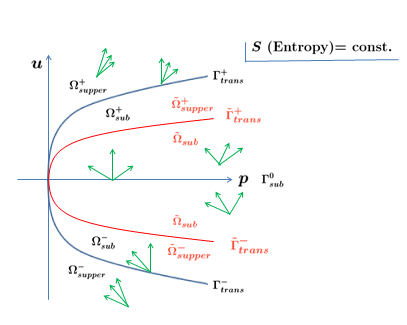}
\caption{ }
\end{figure}
Figure (1.1) comes from (\cite{MPPT}).

 Motivated by (\cite{Huang-L-S},\cite{Naka-Nishi5},\cite{Qin-Wang},\cite{Qin-Wang-2011}),  we are interested in studying
the inflow problem of   (\ref{1.1}),  (\ref{1.3}) and  (\ref{1.14}). we firstly discussed the existence of boundary layer solution
to system (\ref{1.1}) for $u_+>0$.
Precisely speaking,
if $(\rho_+,u_+,\theta_+)\in\tilde{\Omega}_{supper}^+\bigcap
\Omega_{sub}^+$, the  boundary layer solution is non-degenerate;
if  $(\rho_+,u_+,\theta_+)\in \Gamma_{trans}^+,$ the  boundary layer solution is degenerate.  Then we  proved the
unique global-in-time existence and the  asymptotic stability of both the boundary layer and  the superposition with
 the $3-$rarefaction wave  in supersonic case, that is, $u_+>\sqrt{R\theta_+}$, under some smallness conditions.
 We should mention that Nishibata and his group recently proved the existence and stability of boundary layer solution for a class of symmetric hyperbolic-parabolic systems, see \cite{Naka-Nishi3}. We occasionally know this excellent result by his lecture.  Our main analysis is  on the stability of combination of boundary layer solution and rarefaction wave,  which extended the result of \cite{Naka-Nishi3}.  There are also other interesting works for symmetric hyperbolic-parabolic system, see (\cite{Naka-Nishi4}-\cite{Naka-Nishi2}).

Our analysis is based on the  energy method. Since the fact that the non-viscous  system \eqref{1.1} is less dissipative,
 we need more subtle estimates to recover the regularity and dissipativity for the hyperbolic part.
 Precisely to say,
for cauchy problem of (\ref{1.1}),  \cite{F-M}
tell us that the perturbed solution  should be
in $C(H^2)$. However, in this paper,  the perturbed solution space (\ref{2.46}) implies that not only the  diameter derivatives need to be in
 $C(H^2),$  the  normal derivatives need to be in $C(H^1)$ specially.
 The second  main difficulty is how to control the higher order derivatives of boundary terms ( see $J(\tau,0)$  in Lemma 3.6).
 To do this, we should use the interior relations between functions  on the boundary, that's very helpful.
Moreover, some energy estimates on the normal direction besides the diameter direction should be needed.
  As far as we know, seldom works  use estimates on derivative of the normal direction
to study the asymptotic stability of the elementary waves. This method here maybe also helpful to other related
problems with similar analytical diffculties.

The present paper is organized as follows.  in section 2,
we obtain the existence of the boundary layer and some properties of the boundary layer and rarefaction wave,
 then we state our main results.
In section 3, we establish a priori estimates and  prove our main Theorem.\\

\noindent\emph{Notations.}
Throughout this paper, $c$ and $C$ denote some positive constants (generally large). $A\lesssim B$ means that there is a generic constant $C>0$ such that $A\leq CB$ and $A\sim B$ means $A\lesssim B$ and $B\lesssim A$.
For function spaces,~$L^p(\mathbb{R}_+)(1\leq p\leq \infty)$~denotes
the usual Lebesgue space on~$\mathbb{R}_+$~with norm~$\|{\cdot}\|_{L^p}$
and~$H^k(\mathbb{R}_+)$~the usual Sobolev space in the $L^2$ sense with norm~$\|\cdot\|_k$.
We note~$\|\cdot\|=\|\cdot\|_{L^2}$~for simplicity.
And $C^k(I; H^p)$ is the space of $k$-times continuously
differentiable functions on the interval $I$ with values in
$H^p(\mathbb{R}_+)$ and~$L^2(I; H^p)$ the space of~$L^2$-functions
on $I$ with values in~$H^p(\mathbb{R}_+)$.

\section{Boundary layer, Rarefaction wave and Main Results}
\setcounter{equation}{0}

\subsection{The existence of boundary layer}
In this section, we mainly discuss the existence of boundary layer solution
to system (\ref{1.1}) for $u_+>0$.
The boundary layer solution $(\bar{\rho},\bar{u},\bar{\theta})(x)$ to $(1.1)$ should satisfy
\begin{equation}\label{2.1}
\left\{
\begin{aligned}
&(\bar{\rho} \bar{u})_x=0,\\[2mm]
&(\bar{\rho}  \bar{u}^2+ \bar{p})_x=0,\\[2mm]
&(\bar{\rho} \bar{u}(\bar{e}+\frac{\bar{u}^2}{2})+\bar{p}\bar{u})_x=\kappa\bar{\theta}_{xx},\\[2mm]
\end{aligned}
\right.
\end{equation}
and
\begin{equation}\label{2.2}
\bar{\theta}(0)=\theta_-, \quad \quad \lim_{x\rightarrow+\infty}(\bar{\rho},\bar{u},\bar{\theta})(x)=(\rho_+,u_+,\theta_+).
\end{equation}
Integrating (\ref{2.1}) over $[x,+\infty)$, we have
\begin{equation}\label{2.3}
\left\{
\begin{aligned}
&\bar{\rho}\bar{u}=\rho_+ u_+,\\
&\bar{\rho}\bar{u}^2+ \bar{p}=\rho_+u_+^2+p_+,\\
&[\bar{\rho}\bar{u}(C_v\bar{\theta}+\frac{\bar{u}^2}{2})-\rho_+u_+(C_v\theta_++\frac{u_+^2}{2})]
+(\bar{p}\bar{u}-p_+u_+)=\kappa\bar{\theta}_{x},\\
\end{aligned}
\right.
\end{equation}
From $(\ref{2.3})_1$, we see that
\begin{equation*}
u_-=\frac{\rho_+u_+}{\bar{\rho}(0)}>0
\end{equation*}
is a  necessary condition.
Dividing both sides of $(\ref{2.3})_2$ by $\bar{\rho}\bar{u}(\rho_+u_+)$, we get
\begin{equation}\label{2.4}
\bar{u}-u_+=\frac{Ru_+(\theta_+-\bar{\theta})}{u_+\bar{u}-R\theta_+}.
\end{equation}
Substituing $(\ref{2.3})_1$, (\ref{2.4}) into $(\ref{2.3})_3$, system (\ref{2.3})
can be simplified as

\begin{equation}\label{2.5}
\kappa\bar{\theta}_{x}=R\rho_+u_+(\frac{(\bar{u}+u_+)u_+}{2(u_+\bar{u}-R\theta_+)}-\frac{\gamma}{\gamma-1})(\theta_+-\bar{\theta}).
\end{equation}

For convenience of discussion later, we  introducing

\begin{equation*}
 \tilde{w}_1(x)=\frac{\bar{u}(x)}{u_+}=\frac{\rho_+}{\bar{\rho}(x)}>0,\quad\quad\tilde{w}_2(x)=\frac{\bar{\theta}(x)}{\theta_+}>0.
\end{equation*}
Then (\ref{2.4}) tell us that
\begin{equation}\label{2.6}
u_+^2\tilde{w}_1^2(x)-(R\theta_++u_+^2)\tilde{w}_1(x)+R\theta_+\tilde{w}_2(x)=0.
\end{equation}

By (\ref{2.2}) $\lim_{x\rightarrow+\infty}\tilde{w}_1(x)=\lim_{x\rightarrow+\infty}\tilde{w}_2(x)=1$, therefore the relationship between $\tilde{w}_1(x)$ and $\tilde{w}_2(x)$ can be divided into two cases
from (\ref{2.6}),
\begin{equation}\label{2.7}
\left\{
\begin{aligned}
&\tilde{w}_1=\frac{(R\theta_++u_+^2)-\sqrt{(R\theta_++u_+^2)^2-4R\theta_+u_+^2\tilde{w}_2}}{2u_+^2}\quad\quad
0<u_+<\sqrt{R\theta_+},\\
&\tilde{w}_1=\frac{(R\theta_++u_+^2)+\sqrt{(R\theta_++u_+^2)^2-4R\theta_+u_+^2\tilde{w}_2}}{2u_+^2}\quad\quad
\sqrt{R\theta_+}<u_+.
\end{aligned}
\right.
\end{equation}
Above equation implies that $\tilde{w}_2(x)$ should satisfy
\begin{equation}\label{2.8}
\tilde{w}_2(x)\leq \frac{(R\theta_++u_+^2)^2}{4R\theta_+u_+^2}=\tilde{w}_{2\sup},
\end{equation}
where $\tilde{w}_{2\sup}\geq 1$, and $\tilde{w}_{2\sup}=1$ if and only if $u_+^2=R\theta_+$.
By the definitions of $\tilde{w}_1(x),\tilde{w}_2(x)$ and the relationship (\ref{2.7}),
again the symplified system (\ref{2.5}) becomes

\begin{equation}\label{2.9}
\kappa\tilde{w}_{2x}=L(\tilde{w}_2)(1-\tilde{w}_2)=H(\tilde{w}_2),
\end{equation}
where $u_+\gtrless\sqrt{R\theta_+}$,

\begin{equation}\label{2.10}
\begin{aligned}
L(\tilde{w}_2)=\frac{R\rho_+u_+}{2}(\frac{R\theta_++3u_+^2\pm\sqrt{(R\theta_++u_+^2)^2-4R\theta_+u_+^2\tilde{w}_2}}
{(u_+^2-R\theta_+)\pm\sqrt{(R\theta_++u_+^2)^2-4R\theta_+u_+^2\tilde{w}_2}}-\frac{2\gamma}{\gamma-1}),\\
\end{aligned}
\end{equation}
and the boundary condition of (\ref{2.9}) is derived from (\ref{2.2}),
\begin{equation}\label{2.10-1}
\tilde{w}_2(0)=\frac{\theta_-}{\theta_+},\quad\quad \tilde{w}_2(+\infty)=1.
\end{equation}
It is easy to check that when $u_+\gtrless \sqrt{R\theta_+}$
\begin{equation}\label{2.11}
\begin{aligned}
\frac{d}{d\tilde{w}_2}H(\tilde{w}_2)&=\frac{R\rho_+u_+}{2}
\{\frac{\mp G_2'(\tilde{w}_2)(2R\theta_++2u_+^2)}{((u_+^2-R\theta_+)\pm G_2(\tilde{w}_2))^2}(1-\tilde{w}_2)\\
&-(\frac{R\theta_++3u_+^2\pm G_2(\tilde{w}_2)}{(u_+^2-R\theta_+)\pm G_2(\tilde{w}_2)}-\frac{2\gamma}{\gamma-1})\}\\
\frac{d^2}{d^2\tilde{w}_2}H(\tilde{w}_2)=&\frac{R\rho_+u_+(2R\theta_++2u_+^2)G_2'(\tilde{w}_2)}{G_2(\tilde{w}_2)^2[(u_+^2-R\theta_+)\pm G_2(\tilde{w_2})]^3}\{\pm(u_+^2-R\theta_+)^3\\
&\pm3R\theta_+u_+^2(u_+^2-R\theta_+)(1-\tilde{w}_2)
+G_2(\tilde{w}_2)(u_+^2-R\theta_+)^2\}
\end{aligned}
\end{equation}
Where
\begin{equation}\label{2.12}
\begin{aligned}
&G_2(\tilde{w}_2)=\sqrt{(R\theta_++u_+^2)^2-4R\theta_+u_+^2\tilde{w}_2}\geq 0,\\
&\frac{dG_2(\tilde{w}_2)}{d\tilde{w}_2}=-\frac{2R\theta_+u_+^2}{G_2(\tilde{w}_2)}< 0.\\
\end{aligned}
\end{equation}
Hence the existence of the boundary layer solution to (\ref{2.1})-(\ref{2.2})
is equivalent to (\ref{2.9})-(\ref{2.10-1}). Now we start to study the latter.

Through our definition of $\tilde{w}_{2}(x)$ and (\ref{2.8}), it is obvious that the region of $\tilde{w}_2(x)$ for which the
boundary layer solution maybe exists should be $(0,\tilde{w}_{2sup}]$ and all the cases we considered below is under this premise. Here
we have known that $\tilde{w}_2=1$ is a solution of $H(\tilde{w}_2)=0$. If it has another solution $\tilde{w}_{2\ast}$, from (\ref{2.9}), it should satisfy
\begin{equation}\label{2.13}
\begin{aligned}
\frac{R\theta_++3u_+^2\pm G_2(\tilde{w}_{2\ast})}{(u_+^2-R\theta_+)\pm G_2(\tilde{w}_{2\ast})}
-\frac{2\gamma}{\gamma-1}=0, \quad\quad \text{if} \ u_+\gtrless\sqrt{R\theta_+}.
\end{aligned}
\end{equation}

Besides that, we denote the zero point of $\frac{d}{d\tilde{w}_2}H(\tilde{w}_2)$ by $\tilde{w}_{20},$
from $(\ref{2.11})_1$ ,  when $u_+\gtrless\sqrt{R\theta_+},$
\begin{equation}\label{2.14}
\begin{aligned}
&\frac{\mp G_2'(\tilde{w}_{20})(2R\theta_++2u_+^2)}
{((u_+^2-R\theta_+)+G_2(\tilde{w}_{20}))^2}(1-\tilde{w}_{20})
-(\frac{R\theta_++3u_+^2\pm G_2(\tilde{w}_{20})}{(u_+^2-R\theta_+)\pm G_2(\tilde{w}_{20})}
-\frac{2\gamma}{\gamma-1})=0
\end{aligned}
\end{equation}

We get following cases
\begin{itemize}
  \item (1)  If $u_+\in\tilde{\Omega}_{sub}^+$, that is
  $0<u_+<\sqrt{R\theta_+}(\tilde{M}_+<1)$. In this case,
  $\frac{d^2}{d^2\tilde{w}_2}H(\tilde{w}_2)>0$ for $\tilde{w}_2\in(0,\tilde{w}_{2sup}]$ from $(\ref{2.11})_2$. And
  \begin{equation}\label{2.14-1}
  \frac{d}{d\tilde{w}_2}H(\tilde{w}_2=1)>0.
  \end{equation}
  Then the convexity of $H(\tilde{w}_2)$ with $H(\tilde{w}_2=1)=0$ and (\ref{2.14-1}) tell us that there exists
  a small positive constant $\sigma.$ If $\tilde{w}_2(x)\in[1-\sigma,1], H(\tilde{w}_2)\leq 0,$ i.e., $\tilde{w}_{2x}\leq 0$ due to (\ref{2.9}). Thus, $\tilde{w}_2(x)$ is decreasing in $[1-\sigma,1].$ So when $\tilde{w}_2(0)<1,$
  $\tilde{w}_2(x)$ can not approach to $1$ as $x\rightarrow+\infty$. If $\tilde{w}_2(x)\in[1,1+\sigma], H(\tilde{w}_2)\geq 0,$ i.e., $\tilde{w}_{2x}\geq 0$. Thus, $\tilde{w}_2(x)$ is increasing in $[1,1+\sigma]$.
  Therefore when $\tilde{w}_2(0)>1,$ $\tilde{w}_2(x)$ can not approach to $1$ as $x\rightarrow+\infty.$ Consequently, there
  does not exist a solution to (\ref{2.9})-(\ref{2.10-1}) in this case.

 \item (2)If $u_+\in\tilde{\Omega}_{supper}^+\bigcap{\Omega}_{sub}^{+},$ that is $\sqrt{R\gamma\theta_+}>u_+>\sqrt{R\theta_+}(\tilde{M}_+>1$ and $M_+<1)$ .  In this case, $\frac{d^2}{d^2\tilde{w}_2}H(\tilde{w}_2)<0$ for $\tilde{w}_2\in(0,\tilde{w}_{2sup}]$ from $(\ref{2.11})_2$ and
 \begin{equation}\label{2.14-2}
 \begin{aligned}
 \lim_{\tilde{w}_2\rightarrow 0}H(\tilde{w}_2)=\frac{R\rho_+u_+}{2}(\frac{R\theta_++2u_+^2}
 {u_+^2}-\frac{2\gamma}{\gamma-1}).
 \end{aligned}
 \end{equation}
 There are two subcases.

 (2.1) When $(1<\gamma\leq 3)$ or $(\gamma>3$ and $u_+^2>\frac{\gamma-1}{2}R\theta_+),$
 \begin{equation}\label{2.14-3}
 \frac{d}{d\tilde{w}_2}H(\tilde{w}_2=1)<0,\quad\quad
 \lim_{\tilde{w}_2\rightarrow 0}H(\tilde{w}_2)<0.
 \end{equation}
 Combining the concavity of $H(\tilde{w}_2)$ with
 $H(\tilde{w}_2=1)=0$ and (\ref{2.14-3}), it tell us $\exists!\tilde{w}_{2\ast}\in(0,1)$
 such that (\ref{2.13}) holds in this subcase. Moreover, we could get $ 0<\tilde{w}_{2\ast}<\tilde{w}_{20}<1$.
 If $\tilde{w}_{2}(x)\leq\tilde{w}_{2\ast},$ then $H(\tilde{w}_2)\leq 0,$ i.e., $\tilde{w}_{2x}\leq 0$. That is, $\tilde{w}_{2}(x)$ is decreasing. So when  $\tilde{w}_{2}(0)\leq\tilde{w}_{2\ast},$
 $\tilde{w}_{2}(x)$ can not approach to 1 as $x\rightarrow+\infty.$ Consequently, there does not exist the solution to (\ref{2.9})-(\ref{2.10-1}). If $\tilde{w}_{2\ast}<\tilde{w}_{2}(x)<1,$ then $H(\tilde{w}_2)>0,$ i.e., $\tilde{w}_{2x}>0$.
 That is, $\tilde{w}_2(x)$ is increasing. Hence, when $ \tilde{w}_{2\ast}<\tilde{w}_{2}(0)<1,$ there exists a monotonically increasing solution $\tilde{w}_{2}(x)$ to (\ref{2.9})-(\ref{2.10-1}). Lastly,
 if $1<\tilde{w}_{2}(x)\leq\tilde{w}_{2\sup},$ then $H(\tilde{w}_2)<0$, i.e., $\tilde{w}_{2x}<0.$  That is ,$\tilde{w}_2(x)$ is decreasing.
 Therefore when
  $1<\tilde{w}_{2}(0)\leq\tilde{w}_{2\sup},$ there exists a monotonically decreasing solution $\tilde{w}_{2}(x)$ to (\ref{2.9})-(\ref{2.10-1}).
 Thus, we have proved that there exists a solution if and only if $\tilde{w}_{2\ast}<\tilde{w}_{2}(0)\leq\tilde{w}_{2\sup}$. And the decay estimates of the solution are obtained from (\ref{2.9}),

 \begin{equation}\label{2.14-4}
 \begin{aligned}
  |\frac{d^n}{dx^n}(\tilde{w}_{2}(x)-1)|\leq& C|\tilde{w}_2(0)-1|e^{-c_0x} \\
  &\quad\quad\quad\text{for}\quad
  n=1,2,3,...,\quad c_0=\frac{L(1)}{\kappa},
 \end{aligned}
 \end{equation}
 where $L(1)=L(\tilde{w}_2)\mid_{\tilde{w}_2=1}>0.$

 (2.2) When $(\gamma>3$ and $u_+^2\leq\frac{\gamma-1}{2}R\theta_+),$
 \begin{equation}\label{2.14-5}
 \frac{d}{d\tilde{w}_2}H(\tilde{w}_2=1)<0,\quad\quad
 \lim_{\tilde{w}_2\rightarrow 0}H(\tilde{w}_2)\geq 0,
 \end{equation}
 and $\lim_{\tilde{w}_2\rightarrow 0}H(\tilde{w}_2)=0$ holds if and only if $u_+^2=\frac{\gamma-1}{2}R\theta_+$. Combining the concavity of $H(\tilde{w}_2)$ with
 $H(\tilde{w}_2=1)=0$ and (\ref{2.14-5}), it tell us that $\tilde{w}_{2\ast}=0$ when $u_+^2=\frac{\gamma-1}{2}R\theta_+$, otherwise, $\tilde{w}_{2\ast}$ does not exist. If
 $0<\tilde{w}_2(x)<1, H(\tilde{w}_2)>0,$ i.e., $\tilde{w}_{2x}>0$. Therefore, when $0<\tilde{w}_2(0)<1$, there exists a monotonically increasing solution $\tilde{w}_{2}(x)$ to (\ref{2.9})-(\ref{2.10-1}). If $1<\tilde{w}_2(x)\leq\tilde{w}_{2sup}, H(\tilde{w}_2)<0,$ i.e., $\tilde{w}_{2x}<0$. So when $1<\tilde{w}_2(0)\leq\tilde{w}_{2sup}$, there exists a monotonically decreasing solution $\tilde{w}_{2}(x)$ to (\ref{2.9})-(\ref{2.10-1}).
 Hence for any $\tilde{w}_2(0)\in(0,\tilde{w}_{2\sup}],$ the solution to (\ref{2.9})-(\ref{2.10-1}) exists in this subcase. Moreover, the decay estimates of the solution are same as (\ref{2.14-4}).

 \item (3) If $u_+\in\Gamma_{trans}^+$, that is $u_+=\sqrt{R\gamma\theta_+}(M_+=1).$ In this case, $\frac{d^2}{d^2\tilde{w}_2}H(\tilde{w}_2)<0$ for $\tilde{w}_2\in(0,\tilde{w}_{2sup}]$  from $(\ref{2.11})_2$ and $\tilde{w}_{20}=\tilde{w}_{2\ast}=1.$
If $\tilde{w}_{2}(x)\leq 1,$ then $H(\tilde{w}_{2})\leq 0$, i.e, $\tilde{w}_{2x}\leq 0.$ Same as above discussion, when $\tilde{w}_2(0)\leq 1$,
$\tilde{w}_{2}(x)$ can not tends to 1 as $x\rightarrow+\infty.$ There does not exists a solution to
(\ref{2.9})-(\ref{2.10-1}). If $\tilde{w}_{2}(x)>1,$ then $H(\tilde{w}_2)<0$, i.e., $\tilde{w}_{2x}<0$. Therefore when $1<\tilde{w}_{2}(0)\leq \tilde{w}_{2sup}$, there exists a monotonically decreasing solution $\tilde{w}_{2}(x)$ to (\ref{2.9})-(\ref{2.10-1}). So in this case, the solution exists only for $\tilde{w}_2(0)\in (1,\tilde{w}_{2sup}].$
Moreover, by (\ref{2.11}), $\frac{d}{d\tilde{w}_2}H(\tilde{w}_2=1)=0$ and $\frac{d^2}{d^2\tilde{w}_2}H(\tilde{w}_2)\leq\frac{d^2}{d^2\tilde{w}_2}H(\tilde{w}_2=1)<0$ for $\tilde{w}_2\in[1,\tilde{w}_2(0)].$ Hence, the decay estimates of the solution
are obtained from (\ref{2.9}),
\begin{equation}\label{2.17}
|\frac{d^n}{dx^n}(\tilde{w}_{2}(x)-1)|\lesssim \frac{(\tilde{w}_2(0)-1)^{n+1}}
{(1+\tilde{c}_0(\tilde{w}_2(0)-1)x)^{n+1}} \quad\text{for}\quad
n=1,2,3,...,
\end{equation}
where
\begin{equation}\label{2.18}
\tilde{c}_0=-\frac{\frac{d^2}{d^2\tilde{w}_2}H(1)}{2\kappa}>0.
\end{equation}

 \item (4) If $u_+\in\Omega_{supper}^{+},$ that is $u_+>\sqrt{R\gamma\theta_+} (M_+>1).$ In this case, $\frac{d^2}{d\tilde{w}_2^2}H(\tilde{w}_2)<0$ for $\tilde{w}_2\in(0,\tilde{w}_{2sup}]$ from $(\ref{2.11})_2$. And
 \begin{equation}\label{2.19}
 \frac{d}{d\tilde{w}_2}H(\tilde{w}_2=1)>0.
 \end{equation}
Similiar as (1), there exists a small positive constant $\sigma$ such that when $\tilde{w}_2(x)\in[1-\sigma,1], \tilde{w}_{2x}\leq 0$, when $\tilde{w}_2(x)\in[1,1+\sigma],
\tilde{w}_{2x}\geq 0$. Therefore, there does not exist a solution to (\ref{2.9})-(\ref{2.10-1}) in this case.
\end{itemize}

Summarizing $(1)-(4)$, we have the following existence theorem of BL solution.

%%%%%%%%%%%%%%%%%%%%%%%%%%%%Proposition 2.1
\begin{Proposition}\label{t1}
For $\gamma\in(1,+\infty),$ the boundary value problem (\ref{2.9})-(\ref{2.10-1}) has a unique smooth solution $\tilde{w}_2(x)$ if and only if $\tilde{M}_+>1$ and $M_+\leq1$ .
Precisely to say, for $\tilde{M}_+>1$ and $M_+<1$, there are two subcase:

(\romannumeral1) If $1<\gamma\leq 3$ or $\gamma>3$ and $u_+^2>\frac{\gamma-1}{2}R\theta_+$, there exists a unique smooth solution to (\ref{2.9})-(\ref{2.10-1}) when $\tilde{w}_2(0)\in(\tilde{w}_{2\ast},\tilde{w}_{2\sup}].$ Moreover, if
$\tilde{w}_2(0)\lessgtr 1,$ then $\tilde{w}_{2x}\gtrless 0$.

(\romannumeral2) If
$\gamma>3$ and $u_+^2\leq\frac{\ga-1}{2}R\theta_+$,  there exists a unique smooth solution
to (\ref{2.9})-(\ref{2.10-1})  when $\tilde{w}_2\in(0,\tilde{w}_{2sup}].$ Moreover,
 if $\tilde{w}_2(0)\lessgtr 1,$ then $\tilde{w}_{2x}\gtrless 0$. \\
And the decay estimates of the solution to both (\romannumeral1)  and (\romannumeral2) satisfy
(\ref{2.14-4}).

For $M_+=1,$ there exists a unique decreasing solution to (\ref{2.9})-(\ref{2.10-1}) when
$\tilde{w}_2(0)\in (1,\tilde{w}_{2sup}].$ And the decay estimates of this solution satisfy
(\ref{2.17}).
\end{Proposition}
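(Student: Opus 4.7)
The plan is to recognize that the proposition is essentially a summary of the case-by-case ODE analysis presented in items (1)--(4) above it, so my job is to organize that analysis into a clean proof and to pin down the uniqueness and decay estimates that were not fully justified there. The existence of the boundary layer is equivalent to solving the autonomous first-order ODE $\kappa \tilde{w}_{2x} = H(\tilde{w}_2)$ with $\tilde{w}_2(0) = \theta_-/\theta_+$ and $\tilde{w}_2(+\infty) = 1$; since $\tilde{w}_2 = 1$ is always a root of $H$, the question reduces entirely to the sign structure of $H$ on $(0,\tilde{w}_{2\sup}]$ and to whether orbits can asymptote to this equilibrium.

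First I would collect the qualitative information: from \eqref{2.11} and \eqref{2.12}, together with $G_2(1) = |u_+^2 - R\theta_+|$, I read off the sign of $H'(1)$ and the concavity/convexity of $H$ in terms of $\tilde{M}_+$ and $M_+$. This gives four regimes matching cases (1)--(4). In the two regimes where $\tilde{w}_2=1$ is a repeller (namely $\tilde{M}_+ < 1$ and $M_+ > 1$), I would note that on each one-sided neighborhood of $1$ the sign of $H$ forces $\tilde{w}_2(x)$ to move away from $1$, so no solution of the boundary value problem exists; this is the ``only if'' direction. In the remaining regimes, $\tilde{w}_2 = 1$ is an attractor: concavity of $H$ together with $H(1)=0$ and $H'(1)\le 0$ localizes the other candidate zero $\tilde{w}_{2\ast}$ precisely as described in (2.1), (2.2), (3), and the sign of $H$ then determines the basin of attraction. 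Separating variables, $x = \kappa\int_{\tilde{w}_2(0)}^{\tilde{w}_2}\frac{d\xi}{H(\xi)}$, produces a unique, strictly monotone, smooth solution on $[0,\infty)$, with uniqueness guaranteed by the local Lipschitz character of $H$ away from $1$ and by monotonicity of the map $\tilde{w}_2(0) \mapsto$ (resulting orbit).

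The technical step requiring the most care is the decay estimate, which splits into the non-degenerate and degenerate cases. In cases (i)--(ii) one has $H'(1) = -L(1) < 0$, so linearizing $H$ at $1$ gives $\kappa (\tilde{w}_2-1)_x = H'(1)(\tilde{w}_2-1) + O((\tilde{w}_2-1)^2)$; a Gr\"onwall-type comparison then yields exponential decay $|\tilde{w}_2(x) - 1| \lesssim |\tilde{w}_2(0)-1| e^{-c_0 x}$ with $c_0 = L(1)/\kappa$, and higher derivative bounds \eqref{2.14-4} follow inductively by differentiating \eqref{2.9} and plugging in the previous-order bound. In the transonic case $M_+ = 1$, however, $H'(1) = 0$ and one must expand to second order: using $H(\tilde{w}_2) \approx \tfrac{1}{2} H''(1)(\tilde{w}_2-1)^2$ with $H''(1) < 0$, the equation becomes the Bernoulli-type ODE $\kappa (\tilde{w}_2-1)_x \approx \tfrac{1}{2}H''(1)(\tilde{w}_2-1)^2$, which integrates to the algebraic rate $|\tilde{w}_2(x)-1| \sim (\tilde{w}_2(0)-1)/(1+\tilde{c}_0(\tilde{w}_2(0)-1)x)$; differentiating \eqref{2.9} repeatedly propagates this to the bound \eqref{2.17}.

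The main obstacle I anticipate is the degenerate case: because the standard linear exponential decay fails, one must carefully control the quadratic ODE and then verify, via induction on the order of differentiation of \eqref{2.9}, that the higher derivatives inherit the correct powers of the algebraic decay rate $(1+\tilde{c}_0(\tilde{w}_2(0)-1)x)^{-1}$. All the remaining pieces---monotonicity assertions in (i), (ii), existence/uniqueness via separation of variables, and the sharpness statements delineating $(\tilde{w}_{2\ast},\tilde{w}_{2\sup}]$ versus $(0,\tilde{w}_{2\sup}]$ versus $(1,\tilde{w}_{2\sup}]$---are then direct readouts of the sign diagrams of $H$ already catalogued in cases (2.1), (2.2), and (3).
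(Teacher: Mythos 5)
Your proposal is correct and follows essentially the same route as the paper: the paper's proof of Proposition 2.1 is precisely the preceding sign/convexity analysis of $H(\tilde{w}_2)$ in cases (1)--(4), with the exponential decay \eqref{2.14-4} read off from $L(1)>0$ and the algebraic decay \eqref{2.17} from the quadratic expansion $H(\tilde{w}_2)\approx\tfrac12 H''(1)(\tilde{w}_2-1)^2$ in the transonic case. Your additional remarks on separation of variables and the inductive propagation of decay to higher derivatives only make explicit what the paper leaves implicit.
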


\subsection{The properties of boundary layer solution and main result}

In this section, we construct the boundary layer, rarefaction wave for the initial boundary value problem (\ref{1.1}),(\ref{1.3}) and (\ref{1.14}) and  then state our main results. At first, change the Euler coordinates into Lagrange coordinates
\begin{equation}\label{2.20}
\left\{
\begin{aligned}
&v_t-u_x=0, \quad\quad x>s_-t,t>0,\\[2mm]
&u_t+p_x=0,\\
&(C_v\theta+\frac{u^2}{2})_t+(pu)_x=k(\frac{\theta_x}{v})_x,\\[2mm]
&(v,u,\theta)(t,s_-t)=(v_-,u_-,\theta_-),\\[2mm]
&(v,u,\theta)(0,x)=(v_0,u_0,\theta_0)(x)\rightarrow (v_+,u_+,\theta_+)( x\rightarrow+\infty),
\end{aligned}
\right.
\end{equation}
where $v=\frac{1}{\rho}$ is the specific volume of gas,the pressure $p=\frac{R\theta}{v}$ and the moving boundary has a speed $s_-=-\frac{u_-}{v_-}$.
Furthermore, introduce new variables $\xi=x-s_-t,$ then (\ref{2.20}) turns to
\begin{equation}\label{2.21}
\left\{
\begin{aligned}
&v_t-s_-v_\xi-u_\xi=0,\quad\quad \xi>0,\quad t>0,\\
&u_t-s_-u_\xi+p_\xi=0,\\
&(C_v\theta+\frac{u^2}{2})_t-s_-(C_v\theta+\frac{u^2}{2})_\xi
+(pu)_\xi=k(\frac{\theta_\xi}{v})_\xi,\\
&(v,u,\theta)(t,0)=(v_-,u_-,\theta_-),\\
&(v,u,\theta)(0,\xi)=(v_0,u_0,\theta_0)(\xi)\rightarrow (v_+,u_+,\theta_+)(\xi\rightarrow+\infty).
\end{aligned}
\right.
\end{equation}

And in this new coordinates,  the boundary layer solution $\bar{z}:=(\bar{v},\bar{u},\bar{\theta})(\xi)(\xi=x-s_-t)$ satisfies
\begin{equation}\label{2.22}
\left\{
\begin{aligned}
&-s_-\bar{v}_\xi-\bar{u}_\xi=0, \\
&-s_-\bar{u}_\xi+\bar{p}_\xi=0, \\
&-s_-(C_v\bar{\theta}+\frac{\bar{u}^2}{2})_\xi+(\bar{p} \bar{u})_\xi
=k(\frac{\bar{\theta}_\xi}{\bar{v}})_\xi, \\
&(\bar{v},\bar{u},\bar{\theta})(0)=(v_-,u_-,\theta_-), \quad u_->0, \\
&(\bar{v},\bar{u},\bar{\theta})(+\infty)=(v_+,u_+,\theta_+). \\
\end{aligned}
\right.
\end{equation}

Denote the strength of boundary layer solution as
\begin{equation}\label{2.23}
\bar{\delta}=|\theta_+-\theta_-|.
\end{equation}
For each $z_-=(v_-,u_-,\theta_-),$ we consider the situation of
$z_+\in\Om_{sub}^+\cap\tilde{\Om}_{supper}^+$ and  $z_-$ is located in a small
neighborhood of $z_+$. The neighborhood of $z_+$ denoted by $\Om_+$ later is given by
\begin{equation}\label{2.24}
\begin{aligned}
\Om_+=\{(v,u,\theta)|(v-v_+,u-u_+,\theta-\theta_+)|\leq \delta\}\subset\Om_{sub}^+\cap\tilde{\Om}_{supper}^+
\end{aligned}
\end{equation}
where $\delta$ is a positive constant depending only on $z_+$.

Then by the analysis in Section 2.1, we get the following lemma.

\begin{Lemma}\label{lemma1}(Property of boundary layer)$(\bar{v},\bar{u},\bar{\theta})$ satisfies

(1): If $z_+\in \tilde{\Om}_{supper}^+\bigcap{\Om}_{sub}^{+},$ that is $\sqrt{R\theta_+}<u_+<\sqrt{\ga R\theta_+}$, $\exists \bar{\delta}_0>0$, such that if $\bar{\delta}\lesssim \bar{\delta}_0,$ there exists a unique solution $(\bar{v}_x\lessgtr 0,\bar{u}_x\lessgtr 0,\bar{\theta}_x\gtrless 0)$ for (\ref{2.22})  which  is non-degenerate and satisfies
\begin{equation}\label{2.24-1}
|\frac{d^n}{d\xi^n}(\bar{v}-v_+,\bar{u}-u_+,\bar{\theta}-\theta_+)(\xi)|\lesssim \bar{\delta} e^{-c_0\xi}, \quad
n=0,1,2,3,....
\end{equation}

(2): If $z_+\in \Gamma_{trans}^+,$ that is $u_+=\sqrt{\ga R\theta_+},$ $\exists \bar{\delta}_1$ such that  if $0<\bar{\delta}\lesssim \bar{\delta}_1$, there exists a unique solution ($\bar{v}_x>0,\bar{u}_x>0,\bar{\theta}_x<0$) for
(\ref{2.22}) which is degenerate  and satisfies
\begin{equation}\label{2.25}
|\frac{d^n}{d\xi^n}(\bar{v}-v_+,\bar{u}-u_+,\bar{\theta}-\theta_+)(\xi)|\lesssim
\frac{\bar{\delta}^{n+1}}{1+(\bar{\delta}\xi)^{n+1}}.\quad n=0,1,2,3,....
\end{equation}
\end{Lemma}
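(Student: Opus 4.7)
\emph{Proof plan.} The idea is to identify the Lagrangian profile system (\ref{2.22}) with the Eulerian one (\ref{2.1})--(\ref{2.2}) and then invoke Proposition~\ref{t1}. First I would integrate $(\ref{2.22})_1$ using $(\bar v,\bar u)(0)=(v_-,u_-)$ and $s_-=-u_-/v_-$ to recover the mass-flux identity $\bar\rho\bar u=\rho_+u_+$; integrating $(\ref{2.22})_{2,3}$ similarly produces $(\ref{2.3})_{2,3}$. Hence (\ref{2.22}) is equivalent to the scalar ODE (\ref{2.9})--(\ref{2.10-1}) for $\tilde w_2=\bar\theta/\theta_+$ already analyzed in Section~2.1. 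Choosing $\bar\delta_0$ (respectively $\bar\delta_1$) small places $\tilde w_2(0)=\theta_-/\theta_+$ in the admissible interval $(\tilde w_{2\ast},\tilde w_{2\sup}]$ of Proposition~\ref{t1} in case~(1), and in $(1,\tilde w_{2\sup}]$ in case~(2); existence, uniqueness and the sign of $\bar\theta_\xi$ are then immediate.

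The signs of $\bar v_\xi$ and $\bar u_\xi$ follow from the algebraic relation (\ref{2.7}). A direct differentiation shows $d\tilde w_1/d\tilde w_2>0$ on the branch selected by $u_+>\sqrt{R\theta_+}$ for $\tilde w_2$ near $1$, and since $\bar u=u_+\tilde w_1$ and $\bar v=v_+/\tilde w_1$, this yields $\bar u_\xi$ with the sign of $\tilde w_{2\xi}$ and $\bar v_\xi$ with the opposite sign. Combined with the sign of $H(\tilde w_2)/\kappa=\tilde w_{2\xi}$ recorded in the bullet-point discussion of Section~2.1, this gives the monotonicity clauses of Lemma~\ref{lemma1}.

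To promote the scalar decay (\ref{2.14-4}), (\ref{2.17}) to the vector decay (\ref{2.24-1}), (\ref{2.25}), observe that $\tilde w_1$ is real-analytic in $\tilde w_2$ near $\tilde w_2=1$: the discriminant $G_2(\tilde w_2)^2$ in (\ref{2.12}) satisfies $G_2(1)=|u_+^2-R\theta_+|>0$ in both cases (in case~(2), $u_+^2-R\theta_+=(\gamma-1)R\theta_+>0$). Writing $\tilde w_1-1=F(\tilde w_2-1)$ with $F$ smooth and $F(0)=0$, the chain rule applied to (\ref{2.14-4}) immediately yields the exponential bound (\ref{2.24-1}) for $\bar u-u_+$ and $\bar v-v_+$; the bound on $\bar\theta-\theta_+=\theta_+(\tilde w_2-1)$ is direct.

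The main obstacle is preserving the algebraic rate of (\ref{2.17}) through the composition $F\circ(\tilde w_2-1)$ in the degenerate case~(2). By Fa\`a di Bruno's formula, $\partial_\xi^n F(\tilde w_2-1)$ is a finite sum of terms
\begin{equation*}
F^{(j)}(\tilde w_2-1)\prod_{i=1}^{j}\partial_\xi^{k_i}\tilde w_2,\qquad k_1+\cdots+k_j=n,\ \ j\ge 1.
\end{equation*}
By (\ref{2.17}) each such term is bounded by $\bar\delta^{n+j}(1+\bar\delta\xi)^{-(n+j)}$, and since $j\ge1$ and $\bar\delta/(1+\bar\delta\xi)\le1$, this is dominated by $\bar\delta^{n+1}(1+\bar\delta\xi)^{-(n+1)}$. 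This establishes (\ref{2.25}) for $\bar v-v_+$ and $\bar u-u_+$ and completes the reduction of Lemma~\ref{lemma1} to Proposition~\ref{t1}.
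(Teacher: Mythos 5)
Your overall strategy --- integrate \eqref{2.22} to recover the Rankine--Hugoniot-type identities \eqref{2.3}, reduce to the scalar problem \eqref{2.9}--\eqref{2.10-1}, and then transfer existence, monotonicity and decay from $\tilde w_2$ to $(\bar v,\bar u,\bar\theta)$ through the algebraic relation \eqref{2.6}--\eqref{2.7} --- is exactly the reduction the paper intends (it only remarks that the Lemma ``could be obtained immediately'' from \eqref{2.22} and Proposition~\ref{t1}), and your Fa\`a di Bruno argument for promoting the algebraic decay \eqref{2.17} to \eqref{2.25}, resting on $G_2(1)=|u_+^2-R\theta_+|>0$, is correct and more careful than anything the paper writes down.

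However, the sign analysis in your second paragraph is wrong, and its conclusion contradicts the Lemma you are proving. First, since $\tilde w_1=\bar u/u_+=\rho_+/\bar\rho$ and $\bar v=1/\bar\rho$, one has $\bar v=v_+\tilde w_1$, not $\bar v=v_+/\tilde w_1$; equivalently, $\bar\rho\bar u=\rho_+u_+$ forces $\bar v_\xi$ and $\bar u_\xi$ to have the \emph{same} sign. Second, on the branch of \eqref{2.7} selected by $u_+>\sqrt{R\theta_+}$ one has $\tilde w_1=\bigl((R\theta_++u_+^2)+G_2(\tilde w_2)\bigr)/(2u_+^2)$, and by \eqref{2.12} $G_2'<0$, so $d\tilde w_1/d\tilde w_2=G_2'/(2u_+^2)<0$ (equivalently, implicit differentiation of \eqref{2.6} at $\tilde w_1=1$ gives $d\tilde w_1/d\tilde w_2=-R\theta_+/(u_+^2-R\theta_+)<0$ in the supersonic regime). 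Hence $\bar u_\xi$ and $\bar v_\xi$ carry the sign \emph{opposite} to $\bar\theta_\xi$, which is what the Lemma asserts ($\bar v_x\lessgtr0$, $\bar u_x\lessgtr0$, $\bar\theta_x\gtrless0$, and in the degenerate case $\bar v_x>0$, $\bar u_x>0$, $\bar\theta_x<0$). Your version, in which $\bar u_\xi$ shares the sign of $\bar\theta_\xi$ and $\bar v_\xi$ is opposite to $\bar u_\xi$, would disprove the statement rather than prove it; the two slips must both be corrected. A minor further point worth a sentence in a full write-up: the scalar ODE \eqref{2.9} is posed in the Eulerian variable, while \eqref{2.22} uses the Lagrangian variable $\xi$; the change of independent variable multiplies the right-hand side by the bounded, strictly positive factor $\bar v$, so the zero set, signs and decay rates are preserved (only the constant $c_0$ changes).
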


This Lemma could be obtained immediately from our system (\ref{2.22})  and
Proposition \ref{t1}. In the following text, our discussion will take place in the
$ \tilde{\Om}_{supper}^+\bigcap{\Om}_{sub}^{+},$  and the
boundary layer is non-degenerate. To do so, we define the solution space as:
\begin{equation}\label{2.46}
 	\begin{aligned}
 	 \mathbb{X}_{\frac{1}{4}v_-,\frac{1}{4}\theta_-,M}(0,t):=&\{(\phi,\psi,\zeta)\in C([0,t];H^2(\mathbb{R}_+)),(\phi,\psi,\zeta)_t\in C([0,t];H^1(\mathbb{R}_+)),  \\
 	&(\phi,\psi)_\xi\in L^2(0,t;H^1(\mathbb{R}_+)),(\zeta_\xi,\zeta_t)\in L^2(0,t;H^2(\mathbb{R}_+)), \\
 	&\inf_{[0,t]\times\mathbb{R}_+}v(t,\xi)\geq\frac{1}{4}v_-
 	\inf_{[0,t]\times\mathbb{R}_+}\theta(t,\xi)\geq\frac{1}{4}\theta_-, \\
 	&\sup_{[0,t]\times\mathbb{R}_+}\|(\phi,\psi,\zeta)\|_2+\|(\phi_t,\psi_t,\zeta_t)\|_1\leq M	 \}.
 	\end{aligned}
 	\end{equation}

%%%%%%%%%%%%%%%%%%%%%%%%%%%Theorem 2.1
Then our first main result is as follow:
 \begin{Theorem} Assume that $z_-\in BL(z_+)\cap\Om_+$ and $z_+\in \Om_{sub}^+\cap\tilde{\Om}_{supper}^+$, that is,
$R\theta_-<u_-^2<\gamma R\theta_-(\gamma>1)$,  then
there exist some small positive constants $\delta_1$
 and $ \eta_1$  such that if $\bar{\delta}\lesssim \delta_1$ and

\begin{equation}\label{2.26}
\|(\phi_0,\psi_0,\zeta_0)\|_2+\|(\phi_t,\psi_t,\zeta_t)(0)\|_1\lesssim \eta_1,
\end{equation}
 the inflow problem (\ref{2.21}) has a unique solution $(v,u,\theta)(t,\xi)$
satisfying
\begin{equation}\label{2.27}
(v-\overline{v},u-\overline{u},\theta-\overline{\theta})(t,\xi)\in\mathbb{X}_{\frac{1}{4}v_-,\frac{1}{4}\theta_-,C_1(\eta_1+\bar{\delta})^{\frac{1}{2}}
}(0,+\infty)
\end{equation}
for some positive constant $C_1$. Furthermore, it holds that
\begin{equation}\label{2.28}
\sup_{\xi\geq 0}|(v,u,\theta)(t,\xi)-(\overline{v},\overline{u},\overline{\theta})(t,\xi)|\rightarrow 0, \ as
\  t\rightarrow +\infty.
\end{equation}
\end{Theorem}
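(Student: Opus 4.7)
The plan is the classical local-existence-plus-\emph{a priori}-estimates continuation scheme. I would first introduce the perturbation $(\phi,\psi,\zeta)(t,\xi)=(v-\bar v,u-\bar u,\theta-\bar\theta)(t,\xi)$ and subtract the profile equations (\ref{2.22}) from (\ref{2.21}); since $\bar z$ depends only on $\xi$, the resulting system takes the schematic form
\begin{equation*}
\left\{
\begin{aligned}
&\phi_t-s_-\phi_\xi-\psi_\xi=0,\\
&\psi_t-s_-\psi_\xi+(p-\bar p)_\xi=0,\\
&C_v\zeta_t-s_-C_v\zeta_\xi-\kappa\bigl(\tfrac{\zeta_\xi}{v}\bigr)_\xi+\text{convection}=F(\bar z_\xi)(\phi,\psi,\zeta),
\end{aligned}
\right.
\end{equation*}
with homogeneous data $(\phi,\psi,\zeta)(t,0)=0$ (since $z_-$ is prescribed) and small initial data satisfying (\ref{2.26}). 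All source terms in $F$ carry a factor of $\bar z_\xi$, hence by Lemma \ref{lemma1}(1) decay exponentially as $\bar\delta\,e^{-c_0\xi}$. Local existence in $\mathbb{X}$ follows from standard theory for a $2\times 2$ hyperbolic system coupled to a scalar parabolic equation; the conditions $u_->0$ and $u_-^2>R\theta_-$ render the three Dirichlet traces (\ref{1.14}) well-posed.

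The core of the argument is a closed \emph{a priori} estimate under the ansatz $N(t):=\sup_{[0,t]}(\|(\phi,\psi,\zeta)\|_2+\|(\phi,\psi,\zeta)_t\|_1)\le\varepsilon\ll 1$. The basic $L^2$ estimate I would derive from the relative-entropy functional
\begin{equation*}
\mathcal{E}(v,u,\theta;\bar z)=R\bar\theta\,\Phi\!\bigl(\tfrac{v}{\bar v}\bigr)+C_v\bar\theta\,\Phi\!\bigl(\tfrac{\theta}{\bar\theta}\bigr)+\tfrac12\psi^2,\qquad\Phi(s)=s-1-\ln s,
\end{equation*}
which, tested against the system and integrated over $\R_+$, yields $\|(\phi,\psi,\zeta)\|^2$-control plus dissipation of $\zeta_\xi$ and a favorable signed boundary flux generated by the inflow $u_->0$; the exponential decay of $\bar z_\xi$ absorbs the cross terms once $\bar\delta$ is small. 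Because the momentum equation carries no viscosity, dissipation on $(\phi_\xi,\psi_\xi)$ must be recovered indirectly: using $\psi_t=s_-\psi_\xi-(p-\bar p)_\xi$ together with $(p-\bar p)_\xi=R\bar v^{-1}\zeta_\xi-R\bar\theta\bar v^{-2}\phi_\xi+\text{l.o.t.}$, I would test against $\phi_\xi$ and extract $\int\phi_\xi^2\,d\xi$ from the pressure term, absorbing $\int\zeta_\xi^2$ into the already established parabolic dissipation---this is the Kanel'--Kawashima trick for hyperbolic-parabolic coupling without viscosity. Higher-order $(\xi,t)$-derivative estimates follow by differentiating and repeating; the space (\ref{2.46}) is tailored precisely to reflect the fact that $(\phi,\psi)$ inherit only $H^1$-tangential regularity from the $L^2(H^1)$ dissipation while $\zeta$ enjoys $H^2$ parabolic smoothing.

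The main obstacle---explicitly flagged by the authors at the $J(\tau,0)$ term in their Lemma~3.6---is the control of higher-order boundary traces at $\xi=0$. Second-order integrations by parts produce contributions such as $\phi_\xi(t,0)\psi_\xi(t,0)$ which are not controlled by the interior energy. To handle these I would exploit \emph{interior identities at the boundary}: since $(\phi,\psi,\zeta)(t,0)\equiv 0$ in $t$, also $(\phi,\psi,\zeta)_t(t,0)=0$, and evaluating the perturbation system at $\xi=0$ yields algebraic relations such as $s_-\phi_\xi(t,0)+\psi_\xi(t,0)=0$, together with analogous couplings among $\zeta_\xi(t,0)$ and $\zeta_{\xi\xi}(t,0)$. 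These identities trade uncontrolled spatial traces for controlled temporal ones, which is exactly why the normal-direction estimates on $(\phi_t,\psi_t,\zeta_t)\in C(H^1)$ must be tracked in parallel---a feature absent from the Cauchy-problem analysis of \cite{F-M}.

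Once $N(t)\le C(\eta_1+\bar\delta)^{1/2}$ is closed, the continuation principle yields the global solution in the class stated in (\ref{2.27}). For the asymptotic decay (\ref{2.28}) the closed estimate supplies
\begin{equation*}
\int_0^\infty\bigl(\|(\phi_\xi,\psi_\xi,\zeta_\xi)\|^2+\|\partial_t(\phi_\xi,\psi_\xi,\zeta_\xi)\|^2\bigr)d\tau<\infty,
\end{equation*}
so the standard lemma ``$f\ge 0$, $f,f'\in L^1(\R_+)\Rightarrow f\to 0$'' gives $\|(\phi_\xi,\psi_\xi,\zeta_\xi)(t)\|\to 0$, and the Sobolev embedding $H^1(\R_+)\hookrightarrow L^\infty(\R_+)$ together with the uniform $H^2$ bound completes the proof.
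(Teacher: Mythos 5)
Your proposal matches the paper's strategy in all essentials: the relative-entropy functional $R\bar\theta\Phi(v/\bar v)+\tfrac12\psi^2+C_v\bar\theta\Phi(\theta/\bar\theta)$ for the $L^2$ step (the paper's $E$ in (\ref{3.10})), the cross-product test to wring $\int(\phi_\xi^2+\psi_\xi^2)$ from the pressure term (the paper's Lemma 3.3), parallel estimates on the $t$-derivatives $(\phi_t,\psi_t,\zeta_t)\in C(H^1)$ (Lemmas 3.4--3.5), and the use of the interior relations at $\xi=0$ to tame the higher-order boundary traces (Lemma 3.6, relations (\ref{3.74})--(\ref{3.76})). Note, however, that the paper never proves Theorem 2.1 directly; it establishes the composition-wave Theorem 2.3 and records that Theorems 2.1--2.2 follow by taking $z_m=z_\pm$, so your direct attack on the pure boundary-layer case is the simpler special case. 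Two small imprecisions to correct in your write-up: the boundary flux in the basic $L^2$ estimate is identically zero (all of $\phi,\psi,\zeta$ vanish at $\xi=0$), not merely "favorably signed" --- the nontrivial boundary obstruction first appears at the first-derivative level, where the paper shows the quadratic form $\frac{|s_-|}{2}\bigl(\frac{p_-}{v_-}\phi_\xi^2+\psi_\xi^2\bigr)-\frac{p_-}{v_-}\phi_\xi\psi_\xi$ at $\xi=0$ is positive definite via the discriminant (\ref{3.28}), which is exactly where the hypothesis $u_-^2>R\theta_-$ enters; and the trace term $\phi_\xi(t,0)\psi_\xi(t,0)$ you flag is a first-order, not second-order, contribution --- the interior identities $\psi_{t\xi}(t,0)=-s_-\phi_{t\xi}(t,0)$, $\frac{R}{v_-}\zeta_{t\xi}(t,0)=(\frac{p_-}{v_-}-s_-^2)\phi_{t\xi}(t,0)$ are deployed in the paper to handle the genuinely second-order terms in $J(\tau,0)$, namely $\phi_{t\xi}\phi_{\xi\xi}$, $\psi_{t\xi}\psi_{\xi\xi}$, $\zeta_{t\xi}\zeta_{\xi\xi}$. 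With these bookkeeping corrections, the outline is sound.
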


\subsection{Rarefaction wave}
If $z_+\in R_3(z_-),$ that is, the 3-rarefaction wave $(v^r,u^r,\theta^r)(\frac{x}{t})$
connecting $z_-$ and $z_+$ is the unique weak solution globally in time
to the following Riemann problem:
\begin{equation}\label{2.30}
\left\{
\begin{aligned}
&v_t^r-u_x^r=0, \\
&u_t^r+p_x^r=0,  \\
&(e^r+\frac{u^{r^2}}{2})_t+(p^ru^r)_x=0, \\
&(v^r,u^r,\theta^r)(t,0)=
\begin{cases}
&(v_-,u_-,\theta_-),\quad x<0, \\
&(v_+,u_+,\theta_+),\quad x>0.
\end{cases}
\end{aligned}
\right.
\end{equation}
Here $\theta_-<\theta_+$ and $0<u_-<u_+$. To give the details of the large time behavior of the solutions to the inflow problem (\ref{2.21}), it is necessary to construct a smooth approximation $\tilde{z}:
=(\tilde{v},\tilde{u},\tilde{\theta})(t,x)$ of $(v^r,u^r,\theta^r)(\frac{x}{t})$. As in \cite{H-M-S-2}, consider
the solution to the following Cauchy problem:

\begin{equation}\label{2.31}
\left\{
\begin{aligned}
&w_t+ww_x=0, \\
&w(0,x)=
\begin{cases}
w_-,\quad x<0, \\
w_-+C_q\delta^r\int_0^{\eps x}y^q e^{-y}dy, x>0.
\end{cases}
\end{aligned}
\right.
\end{equation}
Here $\delta^r=w_+-w_->0, q>16$ are two constants, $C_q$ is a constant such that
$C_q\int_0^{+\infty}y^q e^{-y}dy=1,0<\eps<1$ is a small constant which will be determined later. Let
$w_\pm=\lambda_3(v_\pm,u_\pm,\theta_\pm),$ we construct the approximated function $\tilde{z}(t,x)$ by
\begin{equation}\label{2.32}
\left\{
\begin{aligned}
&S^r(\tilde{v},\tilde{u},\tilde{\theta})(t,x)=S^r(v_+,u_+,\theta_+),\\[2mm]
&\lambda_3(\tilde{v},\tilde{u},\tilde{\theta})(t,x)=w(1+t,x),\\[2mm]
&\tilde{u}=u_+-\int_{v_+}^{\tilde{v}}\lambda_3(s,S^r)ds.
\end{aligned}
\right.
\end{equation}
Remind that $\xi=x-s_-t,\tilde{z}(t,\xi)$ satisfy
\begin{equation}\label{2.33}
\left\{
\begin{aligned}
&\tilde{v}_t-s_-\tilde{v}_\xi-\tilde{u}_\xi=0,\\[2mm]
&\tilde{u}_t-s_-\tilde{u}_\xi+\tilde{p}_\xi=0,\\
&(\tilde{e}+\frac{\tilde{u}^2}{2})_t-s_-(\tilde{e}+\frac{\tilde{u}^2}{2})_\xi+(\tilde{p}\tilde{u})_\xi=0,\\
&(\tilde{v},\tilde{u},\tilde{\theta})(t,0)=(v_-,u_-,\theta_-), \quad u_->0, \\
&(\tilde{v},\tilde{u},\tilde{\theta})(0,\xi)=(\tilde{v}_0,\tilde{u}_0,\tilde{\theta}_0)(\xi)\rightarrow
(v_+,u_+,\theta_+)(\xi\rightarrow +\infty).
\end{aligned}
\right.
\end{equation}

\begin{Lemma}\label{lemma2}(Smooth rarefaction wave)(\cite{H-M-S-2})$\tilde{z}(t,\xi)$ satisfies

(1)$\tilde{u}_\xi\geq 0$, $\xi>0,t>0$.

(2)For any p($1\leq p\leq +\infty$), there exists a constant C such that
\begin{equation}\label{2.34}
\begin{aligned}
&\|(\tilde{v}_\xi,\tilde{u}_\xi,\tilde{\theta}_\xi)(t)\|_{L^p}\leq C_{pq}\min\{\delta^r\eps^{1-\frac{1}{p}},
(\delta^r)^\frac{1}{p}(1+t)^{-1+\frac{1}{p}}\}, \\
&\|(\tilde{v}_{\xi\xi},\tilde{u}_{\xi\xi},\tilde{\theta}_{\xi\xi})(t)\|_{L^p} \leq C_{pq}\min
\{\delta^r\eps^{2-\frac{1}{p}},((\delta^r)^{\frac{1}{p}}+(\delta^r)^{\frac{1}{q}})(1+t)^{-1+\frac{1}{q}}\},
\end{aligned}
\end{equation}

 (3)If $\xi+s_-t\leq \lambda_3(v_-,u_-,\theta_-)(1+t),$ then
$(\tilde{v},\tilde{u},\tilde{\theta})(t,\xi)\equiv(v_-,u_-,\theta_-).$

(4)$\lim_{t\rightarrow+\infty}\sup_{\xi\in R_+}|(\tilde{v},\tilde{u},\tilde{\theta})(t,\xi)-(v^r,u^r,\theta^r)(t,\xi)|=0.$
\end{Lemma}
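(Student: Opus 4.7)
The plan is to reduce everything to a careful analysis of the scalar inviscid Burgers problem (\ref{2.31}) and then transport the conclusions to $\tilde z$ via the algebraic relations (\ref{2.32}). First I would check that the initial datum $w(0,x)$ is $C^\infty$ for $x\ne 0$ and is at least $C^q$ across $x=0$ (because of the $y^q e^{-y}$ weight), and that it is non-decreasing with $w(0,-\infty)=w_-$, $w(0,+\infty)=w_+$, with the $C^q$ transition concentrated in a region of width $\sim \varepsilon^{-1}$ where $|w_x|\lesssim \delta^r\varepsilon$. Because the datum is monotone, the characteristics $X(t,y)=y+w(0,y)t$ do not cross, so the unique global smooth solution is defined implicitly by $w(t,X(t,y))=w(0,y)$; this gives the foundation of all subsequent estimates.

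For part (1), differentiating the implicit formula gives $w_x(t,x)=\frac{w_0'(y)}{1+t\,w_0'(y)}\geq 0$ where $y=X^{-1}(t,x)$, so $w(1+t,\cdot)$ is non-decreasing in $x$, and by (\ref{2.32}) the same monotonicity transfers to $\lambda_3(\tilde v,\tilde u,\tilde\theta)$ and hence (via the rarefaction-curve parametrization $\tilde u=u_+-\int_{v_+}^{\tilde v}\lambda_3\,ds$) to $\tilde u_\xi$. For part (3), note that for $y\leq 0$ we have $w_0(y)\equiv w_-$, so the characteristic through such a $y$ is $x=y+w_-(1+t)$; any $(t,\xi)$ with $\xi+s_-t\leq \lambda_3(z_-)(1+t)=w_-(1+t)$ therefore traces back to a $y\leq 0$ and $w(1+t,\xi+s_-t)=w_-$, which by (\ref{2.32}) forces $\tilde z(t,\xi)=z_-$.

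For part (2), which is the main technical step, I would use the identity $w_x=\frac{w_0'(y)}{1+t\,w_0'(y)}$ to obtain the pointwise bound $w_x\leq \min\{\,w_0'(y),\,\tfrac{1}{1+t}\,\}$. Integrating the two bounds over the support where $w$ is non-constant yields the two competing estimates: the uniform bound $\|w_x\|_{L^\infty}\lesssim \delta^r\varepsilon$ and the $L^1$ bound $\|w_x\|_{L^1}=w_+-w_-=\delta^r$; interpolating via H\"older gives the $L^p$ estimate $\|w_x(1+t)\|_{L^p}\lesssim \min\{\delta^r\varepsilon^{1-1/p},(\delta^r)^{1/p}(1+t)^{-1+1/p}\}$. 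Differentiating the implicit formula once more produces $w_{xx}$ in terms of $w_0',w_0''$, and the factor $(1+tw_0')^{-3}$ generates the slight loss $(\delta^r)^{1/q}$ in the second-order bound (this is where the assumption $q>16$ buys enough regularity at the transition). Finally, applying (\ref{2.32}) (which is smooth in $(\tilde v,\tilde u,\tilde\theta)$ away from vacuum and expresses all derivatives of $\tilde z$ as smooth functions of $w$ times derivatives of $w$) transfers these bounds verbatim to $\tilde v_\xi,\tilde u_\xi,\tilde\theta_\xi$ and their second derivatives.

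For part (4), the Burgers solution $w(t,x)$ is known to converge as $t\to\infty$ to the self-similar rarefaction $w^r(x/t)$ (equal to $w_-$ for $x/t\leq w_-$, to $x/t$ for $w_-\leq x/t\leq w_+$, and to $w_+$ for $x/t\geq w_+$); this is a standard consequence of the characteristic representation since $X(t,y)/t\to w_0(y)$ and the image of $w_0$ fills $[w_-,w_+]$ in the limit. Pushing this through (\ref{2.32}) yields the uniform convergence stated in (4). The main obstacle I anticipate is the second-derivative estimate in part (2): controlling $w_{xx}$ requires handling the region near $y=0$ where $w_0'$ vanishes only to order $q$, and choosing the exponents in the min structure so that the correct balance between $\varepsilon$ and $(1+t)^{-1}$ emerges; all other pieces are direct consequences of monotone characteristics.
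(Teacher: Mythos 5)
Your proposal is correct and follows essentially the same route as the cited source \cite{H-M-S-2} (the paper itself only quotes the lemma): solve the Burgers problem (\ref{2.31}) by monotone characteristics, get $w_x=w_0'(y)/(1+sw_0'(y))$, interpolate the $L^1$ and $L^\infty$ bounds for the first-order estimate, track the cubic denominator and the order-$q$ vanishing of $w_0'$ at the transition for the second-order estimate, and transport everything to $\tilde z$ through the algebraic relations (\ref{2.32}). No substantive differences to report.
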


%%%%%%%%%%%%%%%%%%%%%%%%%%%%%%%%%%%%%%%%%%%%%Theorem 2.2
Then our second main result is as follow:
 \begin{Theorem} Assume that $z_-\in R_3(z_+)\cap\Om_+$ and $z_+\in \Om_{sub}^+\cap\tilde{\Om}_{supper}^+$, that is,
$R\theta_-<u_-^2<\gamma R\theta_-(\gamma>1)$,
there exist some small positive constants $\delta_2$ and $ \eta_2$  such that if
 $ \epsilon\lesssim \delta_2$ and
\begin{equation}\label{2.36}
\|(\phi_0,\psi_0,\zeta_0)\|_2+\|(\phi_t,\psi_t,\zeta_t)(0)\|_1 \lesssim \eta_2,
\end{equation}
then the inflow problem (\ref{2.21}) has a unique solution $(v,u,\theta)(t,\xi)$
satisfying

\begin{equation}\label{2.37}
(v-v^r,u-u^r,\theta-\theta^r)(t,\xi)\in\mathbb{X}_{\frac{1}{4}v_-,\frac{1}{4}\theta_-,
C_2(\eta_2+\eps^{\frac{1}{8}})^{\frac{1}{2}}}(0,+\infty)
\end{equation}
for some positive constant $C_2$.
Furthermore, it holds that
\begin{equation}\label{2.38}
\sup_{\xi\geq 0}|(v,u,\theta)(t,\xi)-(v^r,u^r,\theta^r)(t,\xi)|\rightarrow 0, \ as
\  t\rightarrow +\infty.
\end{equation}
\end{Theorem}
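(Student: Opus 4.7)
The plan is to reduce the theorem to a global-in-time a priori estimate for the perturbation around the superposition of the boundary layer $\bar z(\xi)$ and the smooth rarefaction wave $\tilde z(t,\xi)$,
\[
(\phi,\psi,\zeta)(t,\xi) := (v,u,\theta)(t,\xi) - \bigl(\bar v+\tilde v-v_+,\ \bar u+\tilde u-u_+,\ \bar\theta+\tilde\theta-\theta_+\bigr)(t,\xi).
\]
Subtracting (\ref{2.22}) and (\ref{2.33}) from (\ref{2.21}), $(\phi,\psi,\zeta)$ satisfies a quasilinear system with a parabolic equation in $\zeta$ and purely hyperbolic equations in $(\phi,\psi)$, together with inhomogeneities of two types: wave-interaction terms between $\bar z-z_+$ and $\tilde z-z_+$, whose supports are essentially disjoint for large $t$ by Lemma \ref{lemma2}(3); and the term $\kappa(\tilde\theta_\xi/\tilde v)_\xi$ coming from the fact that the rarefaction approximation (\ref{2.33}) is inviscid. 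Local-in-time existence in the class (\ref{2.46}) is obtained by a standard iteration scheme once compatibility at $\xi=0$, $t=0$ is ensured by the bound on $\|(\phi_t,\psi_t,\zeta_t)(0)\|_1$, so Theorem 2.2 reduces through the usual continuation argument to a uniform a priori estimate of the form
\[
\|(\phi,\psi,\zeta)(t)\|_2^2 + \|(\phi_t,\psi_t,\zeta_t)(t)\|_1^2 + \int_0^t \bigl(\|\zeta_\xi\|_2^2 + \mathrm{wave\text{-}weighted\ lower\ order}\bigr)\,d\tau \lesssim \eta_2^2 + \eps^{1/8}.
\]

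The energy estimate proceeds in four successive orders. I would first run a relative-entropy $L^2$ estimate by multiplying the equations by $R\theta_\ast\phi/v$, $\psi$, $\zeta/\theta_\ast$ with $\theta_\ast=\bar\theta+\tilde\theta-\theta_+$; the monotonicity $\tilde u_\xi\geq 0$ from Lemma \ref{lemma2}(1) produces a nonnegative quadratic wave term, while the boundary-layer contribution is controlled by $\bar\delta$ and the inflow boundary trace at $\xi=0$ has a favorable sign. Next, differentiate once in $\xi$ and once in $t$. The essential difficulty is that, without viscosity in the $\phi$- and $\psi$-equations, one cannot obtain direct dissipation of $(\phi_\xi,\psi_\xi)$; instead, one uses the $\psi$-equation as an algebraic identity
\[
\frac{R\theta}{v^2}\phi_\xi = -\psi_t + s_-\psi_\xi - \frac{R}{v}\zeta_\xi + (\mathrm{wave\ terms}),
\]
so that $\|\phi_\xi\|$ is recovered from $\|\psi_t\|$ and $\|\zeta_\xi\|$. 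This is precisely what forces the nonstandard space (\ref{2.46}) in which the normal derivatives of $\zeta$ live in $L^2(0,t;H^2)$ and $(\phi_t,\psi_t,\zeta_t)$ live in $C(H^1)$. Third, differentiating once more mixes $\partial_\xi$ and $\partial_t$; the purely spatial second derivatives of $(\phi,\psi)$ are then read off from the equations differentiated once. Fourth, one combines all four orders, chooses weights by Young's inequality, and absorbs error terms into the left-hand side using smallness of $\eta_2$ and $\eps$.

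The main obstacle, as the introduction emphasizes, is controlling the boundary traces at $\xi=0$ in the higher-order estimates (the quantities $J(\tau,0)$). At orders zero and one the Dirichlet conditions $(\phi,\psi,\zeta)(t,0)=0$ and their time derivatives kill these traces, but the $\partial_\xi$ integration by parts against the convective flux $s_-$ at orders one and two produces boundary integrals involving $(\phi_\xi,\psi_\xi,\phi_{\xi\xi},\psi_{\xi\xi})(t,0)$, which cannot simply be discarded. The strategy is to evaluate the perturbation system itself at $\xi=0$: doing so yields algebraic/ODE relations that express the spatial boundary derivatives of $(\phi,\psi)$ in terms of their time derivatives at $\xi=0$, together with $(\zeta_\xi,\zeta_{\xi\xi})(t,0)$. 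Since time derivatives at the boundary vanish or are controlled through the Dirichlet trace, and $\zeta$-traces are controlled by interior $H^2$-regularity of $\zeta_\xi$ via a one-dimensional trace inequality, these boundary terms are absorbed. Iterating the same device handles the second-order trace.

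It remains to quantify the two external sources. The wave-interaction contribution is estimated using the separation of supports in Lemma \ref{lemma2}(3) together with the exponential decay (\ref{2.24-1}) of $\bar z - z_+$ and the algebraic decay (\ref{2.34}) of $\tilde z - z_+$, yielding a bound of order $\bar\delta\,\eps^{1/8}$; the inviscid inhomogeneity $\kappa(\tilde\theta_\xi/\tilde v)_\xi$ is estimated by $\int_0^\infty\|\tilde\theta_{\xi\xi}\|\,d\tau\lesssim \eps^{1/8}$ after optimizing the two bounds in (\ref{2.34}) over $p$ and the time splitting $t\lessgtr \eps^{-1}$. Together these explain the $\eps^{1/8}$ appearing in (\ref{2.37}). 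Once the a priori bound is closed uniformly in $t$, the continuation argument produces a global solution in $\mathbb{X}_{v_-/4,\theta_-/4,C_2(\eta_2+\eps^{1/8})^{1/2}}$; the asymptotic statement (\ref{2.38}) follows from the standard observation that $\|(\phi_\xi,\psi_\xi,\zeta_\xi)(t)\|\in L^2(0,\infty)$, combined with its $H^1$-boundedness and the Sobolev embedding $H^1\hookrightarrow L^\infty$, forces $\sup_\xi|(\phi,\psi,\zeta)(t,\xi)|\to 0$, after which Lemma \ref{lemma2}(4) replaces $\tilde z$ by $z^r$ in the limit.
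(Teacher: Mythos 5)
Your proposal is correct and follows essentially the same route as the paper: the paper in fact proves only Theorem 2.3 (the composite wave) and obtains Theorem 2.2 as the special case of a trivial boundary layer, and your outline --- local existence by iteration, a relative-entropy $L^2$ estimate, first- and second-order energy estimates in both $\xi$ and $t$, recovery of $\int\|(\phi_\xi,\psi_\xi)\|^2$ through the momentum equation, control of the boundary traces by evaluating the perturbation system at $\xi=0$, and the $\eps^{1/8}$ rate obtained by interpolating (\ref{2.34}) --- matches Lemmas 3.1--3.7. The one tactical divergence is the first-order boundary term: the paper does not eliminate $(\phi_\xi,\psi_\xi)(\tau,0)$ but shows the boundary quadratic form is positive definite precisely because $u_-^2>R\theta_-$ (the discriminant computation (\ref{3.28})), so these traces enter the left-hand side as dissipative terms that are then reused in the second-order and mixed-derivative lemmas; your absorption argument also works, but it likewise requires $p_-/v_-\neq s_-^2$ to solve for the spatial traces in terms of the $\zeta$-traces, so the hypothesis $z_+\in\tilde\Om^+_{supper}$ should be invoked explicitly at that point.
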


\begin{Remark}
Note that the strength of rarefaction wave $\delta^r$ can not be suitably small in Theorem 2.2.
\end{Remark}

\subsection{Composition Waves}
For the left state $z_-\in BLR_3(z_+)\cap\Om_+$, we know that, there exists a unique point
$z_m:=(v_m,u_m,\theta_m)\in R_3(z_+)$ such that
 the BL-solution and the 3-rarefaction wave are connected by $z_m$.
Instead $z_-$ by $z_m$ in (\ref{2.33}), it holds that
\begin{equation}\label{2.41}
u_m=u_+-\int_{v_+}^{v_m}\lambda_3(\eta,S^r)d\eta.
\end{equation}
 For this $z_m$, instead $z_+$ by $z_m$ in  (\ref{2.22}),
we expect that the superposition of this boundary layer and the 3-rarefaction wave is stable. To do this, let
\begin{equation}\label{2.42}
(\hat{v},\hat{u},\hat{\theta})(t,\xi)=(\bar{v},\bar{u},\bar{\theta})(\xi)
+(\tilde{v},\tilde{u},\tilde{\theta})(t,\xi)-(v_m,u_m,\theta_m),
\end{equation}
and satisfies
\begin{equation}\label{2.43}
\left\{
\begin{aligned}
&\hat{v}_t-s_-\hat{v}_\xi-\hat{u}_\xi=0, \quad\xi>0,\quad t>0\\
&\hat{u}_t-s_-\hat{u}_\xi+\hat{p}_\xi=G_1,\\
&C_v\hat{\theta}_t-s_-C_v\hat{\theta}_\xi+\hat{p}\hat{u}_\xi
=k(\frac{\hat{\theta}_\xi}{\hat{v}})_\xi+G_2, \\
&(\hat{v},\hat{u},\hat{\theta})(t,0)=(v_-,u_-,\theta_-),\quad u_->0, \\
&(\hat{v},\hat{u},\hat{\theta})(0,\xi)=(\hat{v}_0,\hat{u}_0,\hat{\theta}_0)(\xi)\rightarrow (v_+,u_+,\theta_+),\quad \xi\rightarrow +\infty.
\end{aligned}
\right.
\end{equation}
where
\begin{equation}\label{2.44}
\begin{aligned}
G_1:&=(\hat{p}-\bar{p}-\tilde{p}+p_m)_\xi,\\
&=O(1)(|\bar{z}_\xi||\tilde{z}-z_m|+|\tilde{z}_\xi||\bar{z}-z_m|) \\
&=O(1)\bar{\delta} e^{-c(|\xi|+t)},\\[2mm]
G_2:&=(\hat{p}\hat{u}_\xi-\bar{p}\bar{u}_\xi-\tilde{p}\tilde{u}_\xi)-k(\frac{\hat{\theta}_\xi}{\hat{v}}
-\frac{\bar{\theta}_\xi}{\bar{v}})_\xi \\
&= O(1)(|\bar{z}_\xi||\tilde{z}-z_m|+|\tilde{z}_\xi||\bar{z}-z_m|)+O(1)(|\tilde{\theta}_{\xi\xi}|
+|\tilde{\theta}_\xi|^2) \\
&=O(1)\bar{\delta} e^{-c(|\xi|+t)}+O(1)(|\tilde{\theta}_{\xi\xi}|+|\tilde{\theta}_\xi|^2).
\end{aligned}
\end{equation}

Let
\begin{equation*}
(\phi,\psi,\zeta)=:(v-\hat{v},u-\hat{u},\theta-\hat{\theta})(t,\xi).
\end{equation*}
For $z_m\in\Om_+$, if $\bar{\delta}+\epsilon$ is small, then $z_-\in \Om_+$. Moreover, when  $\|(\phi_0,\psi_0,\zeta_0)\|_2+\|(\phi_t,\psi_t,\zeta_t)(0)\|_1$ is also small, we have
\begin{equation}\label{2.48}
\frac{3}{4} v_- \leq v_0(\xi)\leq \frac{5}{4}v_-,\quad\quad \frac{3}{4}\theta_-\leq\theta_0(\xi)\leq \frac{5}{4}\theta_-.
\end{equation}

%%%%%%%%%%%%%%%%%%%%%%%%%%%%%%%%%%%%%%%Theorem 2.3
The third main result is given below:
\begin{Theorem}\label{t3}Assume that $z_-\in BLR_3(z_+)\cap\Om_+$ and $z_+\in \Om_{sub}^+\cap\tilde{\Om}_{supper}^+$,
that is, $R\theta_-<u_-^2<\gamma R\theta_-(\ga>1)$,
There exist some small positive constants $\delta_3$ and $ \eta_3$,
such that if $\bar{\delta}+\epsilon \lesssim \delta_3$ and
\begin{equation}\label{2.49}
\|(\phi_0,\psi_0,\zeta_0)\|_2+\|(\phi_t,\psi_t,\zeta_t)(0)\|_1 \lesssim \eta_3,
\end{equation}
then the inflow problem (\ref{2.21}) has a unique solution $(v,u,\theta)(t,\xi)$
satisfying
\begin{equation}\label{2.50}
(v-\hat{v},u-\hat{u},\theta-\hat{\theta})(t,\xi)\in\mathbb{X}_{\frac{1}{4}v_-,\frac{1}{4}\theta_-,C_3(\eta_3+\bar{\delta}+\eps^{\frac{1}{8}})^{\frac{1}{2}}}(0,+\infty),
\end{equation}
for some positive constant $C_3$.
Furthermore, it holds that
\begin{equation}\label{2.51}
\sup_{\xi\geq 0}|(v,u,\theta)(t,\xi)-(\hat{v},\hat{u},\hat{\theta})(t,\xi)|\rightarrow 0, \quad as
\quad t\rightarrow +\infty.
\end{equation}
\end{Theorem}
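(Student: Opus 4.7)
The plan is to establish Theorem \ref{t3} by the standard continuation scheme: short-time existence in the function space $\mathbb{X}_{\frac{1}{4}v_-,\frac{1}{4}\theta_-,M}$ is routine, so the heart of the argument is a closed a priori estimate on the perturbation $(\phi,\psi,\zeta)=(v-\hat v,u-\hat u,\theta-\hat\theta)$. Subtracting (\ref{2.43}) from (\ref{2.21}) gives a quasi-linear system
\begin{equation*}
\phi_t-s_-\phi_\xi-\psi_\xi=0,\qquad \psi_t-s_-\psi_\xi+(p-\hat p)_\xi=-G_1,
\end{equation*}
\begin{equation*}
C_v\zeta_t-s_-C_v\zeta_\xi+(pu_\xi-\hat p\hat u_\xi)=\kappa\bigl(\tfrac{\theta_\xi}{v}-\tfrac{\hat\theta_\xi}{\hat v}\bigr)_\xi-G_2,
\end{equation*}
with homogeneous boundary data $(\phi,\psi,\zeta)(t,0)=0$ and small initial data. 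Under the a priori hypothesis
\begin{equation*}
\sup_{\tau\in[0,T]}\bigl(\|(\phi,\psi,\zeta)(\tau)\|_2+\|(\phi_t,\psi_t,\zeta_t)(\tau)\|_1\bigr)\le\chi\ll 1,
\end{equation*}
the task is to return an improved bound of order $\eta_3+\bar\delta+\epsilon^{1/8}$.

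The basic $L^2$ step is obtained by integrating the relative entropy
\begin{equation*}
\eta := R\hat\theta\,\Phi(v/\hat v)+\tfrac{1}{2}\psi^2+C_v\hat\theta\,\Phi(\theta/\hat\theta),\qquad \Phi(s)=s-\log s-1.
\end{equation*}
This produces the parabolic dissipation $\kappa\hat\theta\,\zeta_\xi^2/(v\theta^2)$, a boundary flux at $\xi=0$ with favorable sign thanks to the inflow condition $u_->0$, and an interior gain of $\tilde u_\xi\Phi$-type coming from the monotonicity of the smoothed rarefaction in Lemma \ref{lemma2}(1) together with the BL structure from Lemma \ref{lemma1}(1). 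The interaction remainder generated by $G_1,G_2$ and by the cross terms between $\bar z$ and $\tilde z$ is integrable in $(t,\xi)$: the pointwise bound $|\bar z_\xi(\xi)||\tilde z-z_m|(t,\xi)\lesssim \bar\delta\,e^{-c(\xi+t)}$ combined with the $L^p$-decay in Lemma \ref{lemma2}(2) produces exactly the factors $\bar\delta+\epsilon^{1/8}$ that appear in (\ref{2.50}).

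The non-standard difficulty is that, because (\ref{1.1}) carries no momentum viscosity, the basic energy yields no direct dissipation of $\psi_\xi$. The hyperbolic regularity is recovered by first estimating the time derivatives $(\phi_t,\psi_t,\zeta_t)$ in $L^2$ and then in $H^1$, by differentiating the perturbation system in $t$ and repeating entropy-type multipliers. Once those are in hand, the spatial derivatives are read off algebraically from the equations themselves: $\psi_\xi$ and $\phi_\xi$ follow from the first two equations in terms of $(\phi_t,\psi_t)$ plus lower-order pieces, and $\zeta_{\xi\xi}$ is extracted from the third equation with the parabolic gain. Differentiating once more in $\xi$ delivers the second-order spatial bounds. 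This is exactly the ``normal derivatives in $C(H^1)$'' requirement built into the solution space (\ref{2.46}), in the spirit of the symmetric hyperbolic-parabolic framework of \cite{Naka-Nishi3}.

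The principal obstacle, highlighted in the introduction as the boundary term $J(\tau,0)$ in Lemma 3.6, is the control of derivative traces at $\xi=0$: since only $v,u,\theta$ are Dirichlet-prescribed, the higher-order traces of $(\phi,\psi,\zeta)$ are constrained but not zero. The plan is to use the \emph{interior identities} obtained by restricting the perturbation PDEs to the boundary, which reduce each occurrence of $\psi_\xi(t,0)$, $\phi_\xi(t,0)$, $\zeta_{\xi\xi}(t,0)$ to time-traces on $\{\xi=0\}$ (these vanish since $u,\theta$ are held at $u_-,\theta_-$) plus a single trace of $\zeta_\xi(t,0)$; the latter is absorbed by the interpolation $|\zeta_\xi(t,0)|^2\lesssim \|\zeta_\xi\|\,\|\zeta_{\xi\xi}\|$ against the parabolic dissipation. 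Once all derivative estimates close in terms of $\eta_3+\bar\delta+\epsilon^{1/8}$, the continuation principle delivers the global solution (\ref{2.50}), and the uniform convergence (\ref{2.51}) follows in the standard fashion from the $L^1_t$-integrability of $\|\zeta_\xi\|^2+\|(\phi,\psi)_\xi\|^2$ together with the uniform $H^2$-bound and $H^1(\R_+)\hookrightarrow L^\infty(\R_+)$.
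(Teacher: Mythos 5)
Your overall architecture (local existence plus continuation, relative entropy for the zeroth order, $t$-differentiated energy estimates to recover normal regularity, reduction of boundary traces via the equations restricted to $\xi=0$ with the non-characteristic condition $R\theta_-<u_-^2$) matches the paper's Lemmas 3.1, 3.4--3.6 in spirit. But there is a genuine gap in how you recover the \emph{spacetime dissipation} of the hyperbolic components. Your plan is to read $\phi_\xi,\psi_\xi$ off ``algebraically'' from the first two equations in terms of $(\phi_t,\psi_t)$ and $\zeta_\xi$. That yields at best a pointwise-in-time bound $\|(\phi_\xi,\psi_\xi)(t)\|\lesssim\|(\phi_t,\psi_t)(t)\|+\|\zeta_\xi(t)\|+\cdots$; it does \emph{not} produce $\int_0^t\|(\phi_\xi,\psi_\xi)(\tau)\|_1^2\,d\tau$, because the $t$-differentiated entropy estimates give only $\sup_t\|(\phi_t,\psi_t,\zeta_t)\|^2$ and $\int_0^t\|\zeta_{t\xi}\|^2\,d\tau$ --- there is no damping, hence no bound on $\int_0^t\|(\phi_t,\psi_t)\|^2\,d\tau$ to feed into the algebraic relation. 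These time-integrated quantities are not optional: the basic estimate already generates terms like $\int_0^t\!\int_{\mathbb{R}_+}|\bar u_\xi|(\phi^2+\zeta^2)$, which, after the weighted Poincar\'e inequality exploiting $(\phi,\zeta)(t,0)=0$, are bounded by $\bar\delta\int_0^t\|(\phi_\xi,\zeta_\xi)\|^2d\tau$ and must be absorbed into a dissipation term on the left; and the convergence (\ref{2.51}) itself requires $\int_0^\infty\|(\phi_\xi,\psi_\xi)\|^2dt<\infty$. The paper supplies this missing dissipation by the cross-multiplier (Kawashima-compensation) arguments of Lemmas 3.3 and 3.7: multiplying $(\ref{3.2})_2$ by $-\frac{\hat p}{2}\phi_\xi$ (resp.\ $-\frac{\hat p}{2}\phi_{\xi\xi}$) and $(\ref{3.2})_3$ by $\psi_\xi$ (resp.\ $\psi_{\xi\xi}$) and combining, which makes $\frac{\hat p}{2}\psi_\xi^2+\frac{\hat p^2}{2v}\phi_\xi^2$ appear with a definite sign modulo the already-controlled $\zeta$-dissipation. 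Some such compensating-function step must be added to your scheme or it will not close.

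A secondary, smaller point: for the first-order traces the paper does not reduce everything to $\zeta_\xi(t,0)$ as you propose; instead it shows that the boundary flux produced by the multipliers is a \emph{positive definite} quadratic form in $(\phi_\xi,\psi_\xi,\zeta_\xi)(\tau,0)$ precisely because the discriminant $\frac{p_-}{v_-}(\frac{p_-}{v_-}-s_-^2)$ is negative when $R\theta_-<u_-^2$ (see (\ref{3.28})--(\ref{3.29})), so the traces become good terms on the left; these trace integrals are then needed again to handle the second-order boundary term $J$, where the mixed traces $\phi_{t\xi},\psi_{t\xi},\zeta_{t\xi}$ at $\xi=0$ do \emph{not} vanish and must be controlled by a separate boundary-positive estimate for the $t$-differentiated system (Lemma 3.5). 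Your reduction-plus-interpolation idea is workable at first order (it uses the same non-degeneracy $s_-^2\neq p_-/v_-$), but be aware that at second order the leftover unknown traces are the mixed ones, so the $H^1$ estimate of the time derivatives must itself come with boundary trace control, not just interior bounds.
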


\begin{Remark}
Note here the sterngth of boundary layer $\bar{\delta}$ should be so small,  the strength of rarefaction wave $\delta^r$ can not be suitably small.	
\end{Remark}

\section{Stability Analysis}
\setcounter{equation}{0}
In this section, we give the proofs of the main theorems. Since the results of Theorem 2.3 cover that of Theorem 2.1 and Theorem 2.2 if
$(v_\pm,u_\pm,\theta_\pm)=(v_m,u_m,\theta_m)$, we only show the asymptotic stability of the composition wave,  that is, Theorem 2.3.

\subsection{Reformed System}
Define  the perturbation function
\begin{equation}\label{3.1}
(\phi,\psi,\zeta)(t,\xi)=(v,u,\theta)(t,\xi)-(\hat{v},\hat{u},\hat{\theta})(t,\xi),
\end{equation}
then the reformed equation is
\begin{equation}\label{3.2}
\left\{
\begin{aligned}
&\phi_t-s_-\phi_\xi-\psi_\xi=0,\quad\xi>0, \quad t>0\\
&\psi_t-s_-\psi_\xi+(\frac{R\zeta}{v})_\xi-(\frac{\hat{p}\phi}{v})_\xi=-G_1,\\
&C_v\zeta_t-s_-C_v\zeta_\xi+p\psi_\xi+\hat{u}_\xi(p-\hat{p})
=\kappa(\frac{\zeta_\xi}{v}-\frac{\hat{\theta}_\xi\phi}{v\hat{v}})_\xi-G_2,\\
&(\phi,\psi,\zeta)(t,0)=(0,0,0), \\
&(\phi,\psi,\zeta)(0,\xi)=(\phi_0,\psi_0,\zeta_0)(\xi)\rightarrow (0,0,0),\quad as \quad \xi\rightarrow +\infty,
 \end{aligned}
\right.
\end{equation}
and the initial data satisfies the compatiable condition $(\phi_0,\psi_0,\zeta_0)(0)=(0,0,0).$

The local existence of  the solution to system (\ref{3.2}) is stated as follows :

%%%%%%%%%%%%%%%%%%%%%%%%%%%%%%%%%%%%%%%%%%%%%%%%%%%%%%%%%%%%%%%Local existence
\begin{Proposition}\label{p1}(Local existence) There exist positive constants $\overline{\delta}_1$,  $\bar{\eta}_1$
and $\overline{C},(\overline{C}\bar{\eta}_1\leq \eta_3)$ such that the following statements hold.
Under the assumption $\bar{\delta}+\epsilon\leq \overline{\delta}_1$, for any constant $M\in (0,\bar{\eta}_1)$, there exists a positive constant $t_0=t_0(M) $ not depending on $\tau$ such that if  $\|(\phi,\psi,\zeta)(\tau)\|_2+\|(\phi_t,\psi_t,\zeta_t)(\tau)\|_1\leq M$
and $\inf_{[0,t]\times\mathbb{R}_+}v(t,\xi)\geq \frac{1}{4}v_-,
 \inf_{[0,t]\times\mathbb{R}_+}\theta(t,\xi)\geq \frac{1}{4}\theta_-$, the problem  (\ref{3.2}) has a unique solution $(\phi,\psi,\zeta)(t,\xi)\in \mathbb{X}_{\frac{1}{4}v_-,\frac{1}{4}\theta_-,\bar{C}M}(\tau,\tau+t_0)$.
\end{Proposition}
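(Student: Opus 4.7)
The plan is to prove this by a standard Picard-type iteration combined with linear theory for the mixed hyperbolic–parabolic system \eqref{3.2}. First I would rewrite \eqref{3.2} as a fixed-point problem: given an iterate $(\phi^{n-1},\psi^{n-1},\zeta^{n-1})$, set $v^{n-1}=\hat v+\phi^{n-1}$, $\theta^{n-1}=\hat\theta+\zeta^{n-1}$, and solve the three linear problems
\begin{equation*}
\phi^n_t-s_-\phi^n_\xi=\psi^{n-1}_\xi,\qquad
\psi^n_t-s_-\psi^n_\xi=\Bigl(\tfrac{\hat p\phi^{n-1}}{v^{n-1}}-\tfrac{R\zeta^{n-1}}{v^{n-1}}\Bigr)_\xi-G_1,
\end{equation*}
\begin{equation*}
C_v\zeta^n_t-s_-C_v\zeta^n_\xi-\kappa\Bigl(\tfrac{\zeta^n_\xi}{v^{n-1}}\Bigr)_\xi
=-p^{n-1}\psi^{n-1}_\xi-\hat u_\xi(p^{n-1}-\hat p)-\kappa\Bigl(\tfrac{\hat\theta_\xi\phi^{n-1}}{v^{n-1}\hat v}\Bigr)_\xi-G_2,
\end{equation*}
subject to zero Dirichlet data at $\xi=0$ and the given datum at $t=\tau$. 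Because $-s_-=u_-/v_->0$, the first two equations are transport equations whose characteristics enter the domain from $\xi=0$, so the Dirichlet boundary condition is of the correct type; the third equation is a linear uniformly parabolic equation as long as $v^{n-1}\ge v_-/4$.

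Next I would run energy estimates for each linearized problem to obtain bounds in the norm defining $\mathbb{X}$. For $(\phi^n,\psi^n)$ multiply by $(\phi^n,\psi^n)$, $(\phi^n_\xi,\psi^n_\xi)$, $(\phi^n_{\xi\xi},\psi^n_{\xi\xi})$ and use integration by parts, using $H^1\hookrightarrow L^\infty$ to bound the nonlinear coefficients in terms of $M$; the $\xi=0$ boundary terms all vanish because the perturbations vanish on the boundary. For $\zeta^n$ apply standard linear parabolic estimates (multiplying by $\zeta^n$, $-\zeta^n_{\xi\xi}$, $\zeta^n_{\xi\xi\xi\xi}$, and by $\zeta^n_t$, $-\zeta^n_{t\xi\xi}$) to get $(\zeta^n,\zeta^n_t)\in C([\tau,\tau+t_0];H^2\times H^1)$ and $(\zeta^n_\xi,\zeta^n_t)\in L^2([\tau,\tau+t_0];H^2)$. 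The bounds for $\phi^n_t$, $\psi^n_t$ in $C(H^1)$ are then read off directly from the equations themselves, so the required time-derivative regularity is automatic once we control $\xi$-derivatives of the right-hand sides. Choosing $t_0=t_0(M)$ small and $\bar\eta_1$ small enough, the Gronwall iteration closes with a constant $\bar C$ independent of $\tau$, giving $(\phi^n,\psi^n,\zeta^n)\in\mathbb{X}_{v_-/4,\theta_-/4,\bar C M}(\tau,\tau+t_0)$ for every $n$. Positivity $v^n\ge v_-/4$, $\theta^n\ge\theta_-/4$ follows from Sobolev embedding and continuity in $t$, since at $t=\tau$ the assumption together with smallness of $M$ gives $v\ge v_-/2$ and $\theta\ge\theta_-/2$.

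Then I would prove contraction in a lower-order norm. Subtracting consecutive iterates and repeating the above estimates on the differences gives
\begin{equation*}
\sup_{[\tau,\tau+t_0]}\|(\phi^{n+1}-\phi^n,\psi^{n+1}-\psi^n,\zeta^{n+1}-\zeta^n)\|_1
\le \tfrac12\sup_{[\tau,\tau+t_0]}\|(\phi^n-\phi^{n-1},\psi^n-\psi^{n-1},\zeta^n-\zeta^{n-1})\|_1
\end{equation*}
after taking $t_0$ smaller if necessary. Completeness of the iteration space yields a limit $(\phi,\psi,\zeta)$ with the required boundedness in the higher norms by weak-$*$ compactness, and the limit satisfies \eqref{3.2} almost everywhere, hence classically after the standard continuity bootstrap. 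Uniqueness follows from the same energy difference estimate applied to any two solutions.

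The main obstacle I anticipate is matching the nonstandard space $\mathbb{X}$, which demands \emph{simultaneously} $H^2$ control of the space profile and $H^1$ control of the time derivatives, even though the first two equations carry no diffusion. The fix is to differentiate the transport equations in $t$ and in $\xi$ and close the estimates by using the equations themselves to convert $t$-derivatives into $\xi$-derivatives plus nonlinear remainders; this requires that the iterates are regular enough at the corner $(t,\xi)=(\tau,0)$, which in turn rests on the compatibility condition $(\phi,\psi,\zeta)(\tau,0)=(0,0,0)$ and the resulting relation $s_-\phi_\xi(\tau,0)+\psi_\xi(\tau,0)=-\phi_t(\tau,0)$ etc.\ that is consistent with the hypothesis $\|(\phi_t,\psi_t,\zeta_t)(\tau)\|_1\le M$. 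Handling this carefully is what requires $\bar C$ to be chosen after, rather than before, the iteration bound.
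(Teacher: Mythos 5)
Your overall strategy (linearize, iterate, uniform bounds, contraction in a lower norm, then pass to the limit) matches the paper's, but the specific linearization you chose does not close. You decouple the hyperbolic part completely, writing $\phi^n_t-s_-\phi^n_\xi=\psi^{n-1}_\xi$ and moving the pressure term $\bigl(\hat p\phi^{n-1}/v^{n-1}-R\zeta^{n-1}/v^{n-1}\bigr)_\xi$ to the right-hand side of the $\psi^n$-equation. Scalar transport equations do not gain derivatives, so to bound $\phi^n$ in $C([\tau,\tau+t_0];H^2)$ you need $\psi^{n-1}_\xi\in L^1_t(H^2)$, i.e.\ $\psi^{n-1}\in L^1_t(H^3)$, and symmetrically the $\psi^n$-estimate needs three derivatives of $\phi^{n-1}$. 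The iteration space only controls $(\phi,\psi)$ in $C(H^2)\cap L^2_t(H^2)$ and there is no parabolic smoothing in these two components, so each step of your scheme loses one derivative and the uniform $H^2$ bound cannot be established. This is the classical obstruction to naive Picard iteration for quasilinear hyperbolic systems, and it is precisely the point your proposal glosses over.

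The paper avoids it by keeping the $\phi_\xi$--$\psi_\xi$ coupling at the \emph{current} iterate: in \eqref{3.4-5} the first two equations read $\phi^{(n)}_t-s_-\phi^{(n)}_\xi-\psi^{(n)}_\xi=0$ and $\psi^{(n)}_t-s_-\psi^{(n)}_\xi-a^{(n-1)}\phi^{(n)}_\xi=\tilde g_1^{(n-1)}$ with $a^{(n-1)}=R(\zeta^{(n-1)}+\hat\theta)/(\phi^{(n-1)}+\hat v)^2$, a symmetrizable $2\times2$ hyperbolic system with frozen coefficients. Testing with $(a^{(n-1)}\phi^{(n)},\psi^{(n)})$ and its $\xi$-derivatives turns the principal coupling into a perfect derivative $(a\phi\psi)_\xi$ plus same-order commutators, so the energy estimate closes at the $H^2$ level without derivative loss; only the source $\tilde g_1^{(n-1)}$, which involves $\zeta^{(n-1)}_\xi$ and genuinely lower-order terms, is lagged, and the extra derivatives of $\zeta^{(n-1)}$ this costs are supplied by the parabolic smoothing of the heat equation --- exactly the term $\int_\tau^{\tau+t_0}\|\zeta^{(n-1)}_{j\xi\xi\xi}\|^2+\|\zeta^{(n-1)}_{jt\xi\xi}\|^2\,ds$ appearing in \eqref{3.4-9}. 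Two smaller points: the paper first mollifies the data into $H^m$, $m\ge6$, which you need before testing the heat equation against high derivatives such as $\zeta^n_{\xi\xi\xi\xi}$; and your claim that the $\xi=0$ boundary terms ``all vanish'' is false at the level of second derivatives ($\phi_{\xi\xi},\psi_{\xi\xi}$ do not vanish on the boundary) --- they instead carry a favorable sign because the relevant quadratic form is positive definite under the supersonic condition $u_-^2>R\theta_-$, which is the same discriminant argument as in \eqref{3.28}--\eqref{3.29}.
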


\begin{proof} Consider system (\ref{3.2}) for any $\tau\geq 0$ in following forms:
	
	\begin{equation}\label{3.4-1}
	\left\{
	\begin{aligned}
	&\phi_t-s_-\phi_\xi-\psi_\xi=0,\quad \xi>0,\quad t>\tau,\\
	&\psi_t-s_-\psi_\xi-\frac{p}{v}\phi_\xi
	=\tilde{g}_1=g_1(\phi,\zeta,\zeta_\xi)-G_1,\\
	&C_v\zeta_t
	-\kappa\frac{\zeta_{\xi\xi}}{(\hat{v}+\phi)}=\tilde{g}_2:
	=g_2(\phi,\zeta,\phi_\xi,\psi_\xi,\zeta_\xi)-G_2,\\
	&\phi(t,0)=0, \psi(t,0)=0,\zeta(t,0)=0,\\
	&(\phi,\psi,\zeta)(\tau,\xi)=(\phi^{\tau},\psi^{\tau},\zeta^{\tau})(\xi)\rightarrow (0,0,0)(\xi\rightarrow +\infty),
	\end{aligned}
	\right.
	\end{equation}
	where
	\begin{equation}\label{3.4-2}
	\begin{aligned}
	&g_1(\phi,\zeta,\zeta_\xi)=\frac{\hat{p}_\xi\phi}{\hat{v}+\phi}
	-\frac{\hat{p}\hat{v}_\xi\phi}
	{(\hat{v}+\phi)^2}-\frac{R\zeta_\xi}{\hat{v}+\phi}+\frac{R\zeta\hat{v}_\xi}{(\hat{v}+\phi)^2}\\
	&g_2(\phi,\zeta,\phi_\xi,\psi_\xi,\zeta_\xi)=-\kappa\frac{\zeta_\xi(\hat{v}+\phi)_\xi}{(\hat{v}+\phi)^2}+s_-C_v\zeta_\xi
	-p\psi_\xi-\hat{u}_\xi(p-\hat{p})-\kappa(\frac{\hat{\theta}_\xi\phi}{v\hat{v}})_\xi
	\end{aligned}
	\end{equation}
Now we approximate $(\phi^{\tau},\psi^{\tau},\zeta^{\tau})(\xi)\in H^2$, by
	$(\phi^{\tau}_{j}, \psi^{\tau}_{j},\zeta^{\tau}_{j})\in H^m\cap H^2$
($m \geq 6$) such that
	\begin{equation}\label{3.4-3}
	(\phi^{\tau}_{j},\psi^{\tau}_{j},\zeta^{\tau}_{j})\rightarrow (\phi^{\tau},\psi^{\tau},\zeta^{\tau})\quad \text{strongly}\quad\quad \text{in}\quad\quad H^m
	\end{equation}
	as $j\rightarrow\infty$ and $\|(\phi^{\tau}_{j},\psi^{\tau}_{j},\zeta^{\tau}_{ j})\|_2+\|(\phi^{\tau}_{\tau j},\psi^{\tau}_{\tau j},\zeta^{\tau}_{\tau j})\|_1\leq CM(C>1).$  Moreover,
$$\inf_{\mathbb{R}_+}(\hat{v}+\phi^{\tau}_{j})(\tau,\xi)\geq \frac{1}{4}v_-, \quad \inf_{\mathbb{R}_+}(\hat{\theta}+\zeta^{\tau}_{j})(\tau,\xi)\geq \frac{1}{4} \theta_-$$
hold for any $j\geq 1.$
	
	We will use the iteration method to prove our Proposition 3.1.  Define the sequence
	$\{(\phi^{(n)}_j,\psi^{(n)}_j,\zeta^{(n)}_j)(t,\xi)\}$ for each $j$ so that
	
	\begin{equation}\label{3.4-4}
	(\phi^{(0)}_j,\psi^{(0)}_j,\zeta^{(0)}_j)(t,\xi)=(\phi^{\tau}_{j},\psi^{\tau}_{j},
	\zeta^{\tau}_{j})(\xi)
	\end{equation}
	and
	$(\phi^{(n)}_j,\psi^{(n)}_j,\zeta^{(n)}_j)(t,\xi)(n\geq1)$ is the solution to the following equation

	\begin{equation}\label{3.4-5}
	\left\{
	\begin{aligned}
	&\phi_{jt}^{(n)}-s_-\phi_{j\xi}^{(n)}-\psi_{j\xi}^{(n)}=0,\quad \xi>0\quad t>\tau,\\
	&\psi_{jt}^{(n)}-s_-\psi_{j\xi}^{(n)}-\frac{R(\zeta^{(n-1)}_j+\widehat{\theta})}{(\phi^{(n-1)}_j+\widehat{v})^2}\phi^{(n)}_{j\xi}
	=\tilde{g}_1^{(n-1)}(\phi^{(n-1)}_j,\zeta^{(n-1)}_j,\zeta^{(n-1)}_{j\xi}),\\
	&C_v\zeta_{jt}^{(n)}-\kappa\frac{\zeta_{j\xi\xi}^{(n)}}{(\hat{v}+\phi_{j}^{(n-1)})}
	=\tilde{g}_2^{(n-1)}(\phi^{(n-1)}_j,\zeta^{(n-1)}_j,\zeta^{(n-1)}_{j\xi}),\\
	&\phi^{(n)}_j(t,0)=0,\quad\psi^{(n)}_j(t,0)=0,\quad\zeta^{(n)}_j(t,0)=0,\\
	&(\phi^{(n)}_j,\psi^{(n)}_j,\zeta^{(n)}_j)(\tau,\xi)=(\phi^{\tau}_{j},\psi^{\tau}_{j},\zeta^{\tau}_{j})(\xi),
	\end{aligned}
	\right.
	\end{equation}
where
	\begin{equation}\label{3.4-6}
	\begin{aligned}
 \tilde{g}_1^{(n-1)}&=:g_1^{(n-1)}(\phi^{(n-1)}_j,\zeta^{(n-1)}_j,\zeta^{(n-1)}_{j\xi})-G_1\\
\tilde{g}_2^{(n-1)}&=:g_2^{(n-1)}(\phi_{j}^{(n-1)},\zeta_j^{(n-1)},\phi_{j\xi}^{(n-1)},\psi_{j\xi}^{(n-1)},\zeta_{j\xi}^{(n-1)})-G_2.
	\end{aligned}
	\end{equation}
  We now assume that $\bar{\eta}_1$ suitably small, if $\tilde{g}_2^{(n-1)}\in C(\tau,\tau+t_0;H^{m-1})$,
 $\zeta^{\tau}_{j}\in H^{m},$ then there exists a unique local solution $\zeta_{j}^{(n)}$ to (\ref{3.4-5}) satisfying
	\begin{equation}\label{3.4-7}
	\zeta^{(n)}_j\in C(\tau,\tau+t_0;H^m)\cap C^1(\tau,\tau+t_0;H^{m-2})\cap L^2(\tau,\tau+t_0;H^{m+1})
	\end{equation}
	
Making use of this, if $(\phi^{(n-1)}_j,\psi^{(n-1)}_j,\zeta^{(n-1)}_j)(t,\xi)
     \in\mathbb{X}_{\frac{1}{4}v_-,\frac{1}{4}\theta_-,\bar{C}M}(\tau,\tau+t_0),$
	from system (\ref{3.4-5}), by Gronwall inequality, we immediately get that for
	$t\in[\tau,\tau+t_0],$
	\begin{equation}\label{3.4-8}
	\begin{aligned}
	&\|\zeta^{(n)}_j(t)\|_2^2+\|\zeta^{(n)}_{jt}(t)\|_1^2+
	\int_\tau^{\tau+t_0}\|\zeta^{(n)}_{\xi}(s)\|_2^2+\|\zeta^{(n)}_t(s)\|_2^2ds\\
	&\leq e^{C(v_-,\theta_-,\bar{\eta}_1)t_0}(\|\zeta^{\tau}_j(\tau)\|_2^2+\|\zeta^\tau_{j\tau}(\tau)\|_1^2
	+C(v_-,\theta_-,\bar{\eta}_1)t_0),
	\end{aligned}
	\end{equation}
	Then a direct computation on $(\ref{3.4-5})_{1,2}$ with (\ref{3.4-8}) also tell us
	
	\begin{equation}\label{3.4-9}
	\begin{aligned}
	&\|(\phi^{(n)}_j,\psi^{(n)}_j)(t)\|_2^2+\|(\phi_{jt}^{(n)},\psi_{jt}^{(n)})(t)\|_1^2 \\[2mm]
	\leq & e^{C(v_-,\theta_-,\bar{\eta}_1)t_0}\{\|(\phi^{\tau}_j,\psi^{\tau}_j,\zeta^{\tau}_j)\|_2^2
	+\|(\phi^\tau_{j\tau},\psi^\tau_{j\tau},\zeta^\tau_{j\tau})\|_1^2\\
	&+C(v_-,\theta_-,\bar{\eta}_1)t_0
	+C(\int_\tau^{\tau+t_0}\|\zeta^{(n-1)}_{j\xi\xi\xi}(s)\|^2+\|\zeta^{(n-1)}_{jt\xi\xi}(s)\|^2ds)\}.
	\end{aligned}
	\end{equation}
	Combining (\ref{3.4-8}) and (\ref{3.4-9}), as long as $t_0$ suitably small, we finally get
	\begin{equation}\label{3.4-10}
	\|(\phi^{(n)}_j,\psi^{(n)}_j,\zeta^{(n)}_j)(t)\|_2+\|(\phi_{jt}^{(n)},\psi_{jt}^{(n)},\zeta_{jt}^{(n)})(t)\|_1\leq \bar{C}M, \quad t\in[\tau,\tau+t_0].
	\end{equation}
If $\bar{\eta}_1$ suitably small, by Sobolev's inequality
 the sequence $(\phi^{(n)}_j,\psi^{(n)}_j,\zeta^{(n)}_j)$ is uniformly bounded in the function space $\mathbb{X}_{\frac{1}{4}v_-,\frac{1}{4}\theta_-, \bar{C}M}(\tau,\tau+t_0).$ By using the same method in (\cite{Kawa}), we can  finally prove that $(\phi^{(n)}_j,\psi^{(n)}_j,\zeta^{(n)}_j)$ has a subsequence $(\phi^{(n')}_j,\psi^{(n')}_j,\zeta^{(n')}_j)\rightarrow (\phi_j,\psi_j,\zeta_j)\in \mathbb{X}_{\frac{1}{4}v_-,\frac{1}{4}\theta_-,\bar{C}M}(\tau,\tau+t_0)$  as $n'\rightarrow \infty$. Again, we let $j\rightarrow\infty,$ we can obtain the desired unique local solution $(\phi,\psi,\zeta)(t,\xi)\in
	\mathbb{X}_{\frac{1}{4}v_-,\frac{1}{4}\theta_-,\bar{C}M}(\tau,\tau+t_0)$ under the assumption $t_0$ is small enough. Thus Proposition \ref{p1} has been proved.

\end{proof}

 Set
\begin{eqnarray}\label{3.5}
\begin{aligned}
N(T):=\sup_{t\in[0,T]}\|(\phi,\psi,\zeta)(t)\|_2+\|(\phi_t,\psi_t,\zeta_t)(t)\|_1,
 \end{aligned}
\end{eqnarray}
Suppose that  $ (\phi,\psi,\zeta)(t,\xi) $  obtained in Proposition 3.1 has been extended to some time $T>t$,
 we want to get the following a priori estimates to obtain a global solution.

\begin{Proposition} (A priori estimates) \label{p2}Under the conditions listed in Theorem \ref{t3},
$(\phi,\psi,\zeta)(t,\xi) \in \mathbb{X}_{\frac{1}{4}v_-,\frac{1}{4}\theta_-,N(T)}(0,T)$ is the solution of
the problem  (\ref{3.2}) obtained in Proposition 3.1 which has been extended to some $T>0$, and there
exists  $\overline{\delta}_2$ and $\overline{\eta}_2$
 such that if $ \epsilon+\bar{\delta}\leq \overline{\delta}_2$  and $N(T)\leq \overline{\eta}_2$,
 then it holds that for $t\in[0,T]$,
\begin{eqnarray}\label{3.6}
   \begin{aligned}
&\|(\phi,\psi,\zeta)(t)\|^2_2+\|(\phi_t,\psi_t,\zeta_t)(t)\|^2_1+\int^t_0\|(\phi_\xi,\psi_\xi,\zeta_\xi,\zeta_{\xi\xi})(\tau)\|^2_1d\tau \\[2mm]
&+\int^t_0\|(\zeta_{t\xi},\zeta_{t\xi\xi})(\tau)\|^2d\tau
+\int^t_0(\psi_\xi^2+\phi_\xi^2+\zeta_\xi^2)(\tau,0)d\tau\\[2mm]
&+\int^t_0(\psi_{\xi\xi}^2+\phi_{\xi\xi}^2+\zeta_{\xi\xi}^2+\psi_{t\xi}^2+\phi_{t\xi}^2+\zeta_{t\xi}^2)(\tau,0)d\tau \\[2mm]
\lesssim & \|(\phi_0,\psi_0,\zeta_0)\|^2_2+\|(\phi_t,\psi_t,\zeta_t)(0)\|^2_1+\bar{\delta}+\epsilon^{\frac{1}{8}}.
\end{aligned}
\end{eqnarray}
\end{Proposition}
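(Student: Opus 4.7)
The plan is to run a continuation argument on the local solution from Proposition \ref{p1}, so the task reduces to closing the a priori estimate (\ref{3.6}) uniformly in $T$ under the bootstrap hypothesis $N(T)\le\overline{\eta}_2$. Smallness of $N(T)$ together with the uniform bounds on $(\hat v,\hat\theta)$ gives $v\sim v_\pm$ and $\theta\sim\theta_\pm$ on $[0,T]\times\mathbb{R}_+$ by Sobolev embedding, so every coefficient is comparable to its far-field value. All source terms decay either exponentially in $|\xi|+t$ (boundary-layer contributions, by Lemma \ref{lemma1}) or like $(1+t)^{-1}$ in suitable norms (rarefaction contributions, by Lemma \ref{lemma2}); after Cauchy--Schwarz and Young these produce no worse than $\bar\delta+\epsilon^{1/8}$ on the right-hand side, the exponent $1/8$ arising by optimizing between the two bounds in (\ref{2.34}).

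\textbf{Basic energy estimate.} First I would multiply (\ref{3.2}) by a standard relative-entropy--type vector (of the form $\hat p\phi/(v\hat v),\psi,\zeta/\theta$) and integrate in $\xi$. The hyperbolic terms assemble into $\partial_t$ of a positive-definite density plus the inflow flux at $\xi=0$; since $s_-=-u_-/v_-<0$ and the Dirichlet condition $(\phi,\psi,\zeta)|_{\xi=0}=0$ annihilates the leading boundary trace, only an easily controlled trace of $\zeta_\xi(\tau,0)$ coming from the parabolic term survives. The heat-conduction term yields the dissipation $\int_0^t\|\zeta_\xi\|^2\,d\tau$, while the coupling $\hat u_\xi(p-\hat p)$ and the inhomogeneities $G_1,G_2$ are absorbed using the decay estimates of Lemmas \ref{lemma1}--\ref{lemma2}. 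This closes the zeroth-order estimate up to the remainder $\bar\delta+\epsilon^{1/8}$.

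\textbf{Recovering hyperbolic dissipation and higher derivatives.} The decisive obstacle is that (\ref{3.2}) supplies no direct viscous dissipation for $\phi,\psi$. To remedy this, I eliminate $\psi_\xi$ between $(\ref{3.2})_1$ and $(\ref{3.2})_2$ to obtain the pointwise identity
\[
\Bigl(s_-^2-\tfrac{\hat p}{v}\Bigr)\phi_\xi=-\psi_t+s_-\phi_t+\Bigl(\tfrac{R\zeta}{v}\Bigr)_\xi-G_1+\text{l.o.t.},
\]
whose coefficient equals $\rho_+(u_+^2-R\theta_+)+o(1)>0$ thanks to the supersonic hypothesis $\tilde M_+>1$, i.e.\ $u_-^2>R\theta_-$. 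Thus $\phi_\xi$, and hence $\psi_\xi$, is controlled by $(\phi_t,\psi_t,\zeta_\xi)$ plus sources. To use this I differentiate the whole system once in $t$ and rerun the entropy-type estimate on $(\phi_t,\psi_t,\zeta_t)$; this furnishes dissipation for $\zeta_{t\xi}$ and, through the identity above, for $(\phi_\xi,\psi_\xi)$ in $L^2_{t,\xi}$. Differentiating once in $\xi$ and repeating yields the remaining second-order bounds for $(\phi_{\xi\xi},\psi_{\xi\xi},\zeta_{\xi\xi})$ and, coupled again with a $t$-differentiation, for $(\phi_{t\xi},\psi_{t\xi},\zeta_{t\xi})$. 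The boundary traces produced by integration by parts at $\xi=0$ carry the favorable sign $-s_->0$ and therefore feed into the left-hand side as bona fide dissipation, yielding the trace integrals $\int_0^t(\phi_\xi^2+\psi_\xi^2+\zeta_\xi^2)(\tau,0)\,d\tau$.

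\textbf{The hard boundary term $J(\tau,0)$.} The most delicate point, flagged in the introduction, is the second-order boundary contribution $J(\tau,0)$ that appears after integration by parts on the twice-differentiated momentum and energy equations, in particular the traces of $\phi_{\xi\xi},\psi_{\xi\xi},\zeta_{\xi\xi}$ and $\phi_{t\xi},\psi_{t\xi},\zeta_{t\xi}$ at $\xi=0$. The key is to exploit the Dirichlet data: since $(\phi,\psi,\zeta)(t,0)=0$ for all $t$, every tangential derivative vanishes there, so $\phi_t|_0=\psi_t|_0=\zeta_t|_0=0$ and likewise at second order. Restricting (\ref{3.2}) and its $t$-differentiated form to $\xi=0$ then produces purely algebraic identities between the normal traces; for instance $(\ref{3.2})_1|_{\xi=0}$ forces $\psi_\xi(\tau,0)=-s_-\phi_\xi(\tau,0)$, while $(\ref{3.2})_{2,3}|_{\xi=0}$ and their $t$-derivatives let one rewrite $\phi_{\xi\xi}(\tau,0),\psi_{\xi\xi}(\tau,0),\zeta_{\xi\xi}(\tau,0),\phi_{t\xi}(\tau,0),\psi_{t\xi}(\tau,0),\zeta_{t\xi}(\tau,0)$ as linear combinations of first-order traces already controlled by the preceding step plus source terms of size $\bar\delta+\epsilon$. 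Substituting these relations into $J(\tau,0)$ recasts it as a quantity absorbable into the left-hand side once $\bar\delta+\epsilon+N(T)$ is small enough. Summing the basic, the $t$-differentiated, and the $\xi$-differentiated estimates and absorbing all small-coefficient terms finally delivers (\ref{3.6}).
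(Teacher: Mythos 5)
Your overall blueprint---relative-entropy basic estimate, a separate mechanism to recover the missing hyperbolic dissipation, algebraic boundary-trace relations from the Dirichlet data to tame $J(\tau,0)$, and smallness absorption---matches the skeleton of the paper's Lemmas 3.1--3.7, and the way you handle the sources (exponential decay from the boundary layer, the $(1+t)^{-1}$-type decay from the rarefaction giving the $\bar\delta+\epsilon^{1/8}$ remainder) and the second-order boundary term (restrict (\ref{3.2}) and its $t$-derivative to $\xi=0$ and substitute, as in (\ref{3.73})--(\ref{3.78})) is essentially the paper's. There is, however, a genuine gap in the step where you recover $\int_0^t\|(\phi_\xi,\psi_\xi)\|^2\,d\tau$.

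You propose to eliminate $\psi_\xi$ between $(\ref{3.2})_1$ and $(\ref{3.2})_2$, obtaining a pointwise identity of the form $(s_-^2-\hat p/v)\phi_\xi = -\psi_t+s_-\phi_t+\cdots$, and then claim that the $t$-differentiated entropy estimate ``furnishes dissipation for $\zeta_{t\xi}$ and, through the identity above, for $(\phi_\xi,\psi_\xi)$ in $L^2_{t,\xi}$.'' This does not close: the $t$-differentiated estimate (the paper's Lemma 3.4) delivers $\sup_t\|(\phi_t,\psi_t,\zeta_t)\|^2$ and $\int_0^t\|\zeta_{t\xi}\|^2\,d\tau$, but \emph{not} $\int_0^t\|(\phi_t,\psi_t)\|^2\,d\tau$, and without the latter the identity gives no $L^2_{t,\xi}$ control of $\phi_\xi$. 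Indeed $\phi_t=s_-\phi_\xi+\psi_\xi$, so any attempt to estimate $\int\|\phi_t\|^2$ independently is circular. If instead you try to multiply the identity by $\phi_\xi$ and integrate, the term $\int\psi_t\phi_\xi$ must be integrated by parts in $t$ and $\xi$, and after substituting $\phi_t=s_-\phi_\xi+\psi_\xi$ back you are left with the quadratic form
\begin{equation*}
\psi_\xi^2 + s_-\,\phi_\xi\psi_\xi + \Bigl(s_-^2-\tfrac{\hat p}{v}\Bigr)\phi_\xi^2,
\end{equation*}
whose positive-definiteness requires $\hat p/v < \tfrac{3}{4}s_-^2$, i.e.\ $u_-^2>\tfrac{4}{3}R\theta_-$, strictly stronger than the hypothesis $u_-^2>R\theta_-$. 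The paper avoids all of this by using the Kawashima cross-term estimates: multiply $(\ref{3.2})_2$ by $-\frac{\hat p}{2}\phi_\xi$ and $(\ref{3.2})_3$ by $\psi_\xi$ (Lemma 3.3), and similarly at second order (Lemma 3.7); after integrating $\psi_t\phi_\xi$ by parts the dangerous $-\frac{\hat p}{2}\psi_\xi^2$ is compensated by $p\psi_\xi^2$ from the energy equation, producing $\frac{\hat p^2}{2v}\phi_\xi^2+\frac{\hat p}{2}\psi_\xi^2$ with no condition on $u_-$.

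Relatedly, you invoke the supersonic condition in the wrong place. In the paper it is used only in the boundary-trace estimates (Lemmas 3.2, 3.5, 3.6), where it makes the discriminant of the quadratic form $\frac{|s_-|}{2}\bigl(\frac{p_-}{v_-}\phi_\xi^2+\psi_\xi^2\bigr)-\frac{p_-}{v_-}\phi_\xi\psi_\xi$ negative, cf.\ (\ref{3.28}); the favorable sign of $-s_-$ alone does not control the off-diagonal term $-\frac{p_-}{v_-}\phi_\xi\psi_\xi$, as your write-up suggests. So the ingredient you need ($u_-^2>R\theta_-$) is correct, but it closes the boundary estimate, not the interior one, and the interior dissipation must come from the cross-term trick rather than from the pointwise elimination.
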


Once Proposition \ref{p2} is proved, we can extend the unique local solution $(\phi,\psi,\zeta)(t,\xi) $
 obtained in Proposition \ref{p1} to $t=\infty$, moreover, estimate (\ref{3.6}) implies that
\begin{eqnarray}\label{3.7}
  \int^\infty_0 \big( \| (\phi_\xi,\psi_\xi,\zeta_\xi)(t) \|^2 + \big| \frac{d}{dt} \| (\phi_\xi,\psi_\xi,\zeta_\xi)(t) \|^2 \big| \big) d \tau
 < + \infty \, ,
\end{eqnarray}
which together with  Sobolev inequality easily leads to the asymptotic
behavior  (\ref{2.51}), this  concludes the proof of Theorem 2.3.
In the rest of this section, our main task is to show the a priori estimates.

\subsection{A Priori Estimates}
In the following part of this section, we mainly proof the  Proposition 3.2, under the assumption $\bar{\delta}+\epsilon\leq \overline{\delta}_2$,
$N(t)\leq \overline{\eta}_2$, $\hat{v},v,\hat{\theta}, \theta$ are uniformly positive on $[0,T]$ by Sobolev's inequality as
\begin{eqnarray}\label{3.8}
   \begin{aligned}
   \inf_{x}\hat{v}\geq \frac{3v_-}{4},\ \inf_{x,t}v\geq \frac{v_-}{4}, \
\inf_{x}\hat{\theta}\geq \frac{3\theta_-}{4}, \ \inf_{x,t}\theta\geq \frac{\theta_-}{4}.
 \end{aligned}
\end{eqnarray}
which will be used later.
At first, we show the basic estimates.

%%%%%%%%%%%%%%%%%%%%%%%%%%%%%%%%%%%%%%%%%%%%%%%%Lemma 3.1
\begin{Lemma} Under the same assumptions listed in Proposition 3.2, if $\bar{\delta},  \epsilon, N(T)$ are suitably small, it holds that
\begin{equation}\label{3.9}
\begin{aligned}
&\|(\phi,\psi,\zeta)(t)\|^2+\IT(\|\sqrt{\tilde{u}_\xi}(\phi,\zeta)(\tau)\|^2+\|\zeta_\xi(\tau)\|^2)d\tau \\
\lesssim & \|(\phi_0,\psi_0,\zeta_0)\|^2+(\overline{\delta}+\epsilon^{\frac{1}{8}})\{\IT\|(\phi_\xi,\psi_\xi)(\tau)\|^2d\tau+1\}.
\end{aligned}
\end{equation}
\end{Lemma}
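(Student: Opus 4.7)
The strategy is the relative-entropy (free-energy) method, which is standard for compressible Navier--Stokes type systems. I would introduce the entropy density
\begin{equation*}
\mathcal{E}(t,\xi) = \frac{\psi^2}{2} + R\hat{\theta}\,\Phi\!\left(\frac{v}{\hat v}\right) + C_v\hat{\theta}\,\Phi\!\left(\frac{\theta}{\hat\theta}\right),\qquad \Phi(s)=s-\ln s-1,
\end{equation*}
which is nonnegative with $\Phi(s)\sim (s-1)^2$ near $s=1$, so that $\mathcal{E}\sim \phi^2+\psi^2+\zeta^2$ pointwise once $N(T)$ is small and the uniform positivity (3.8) is used. Multiplying $(3.2)_2$ by $\psi$ and $(3.2)_3$ by $\zeta/\theta$, and combining with a suitable multiple of $(3.2)_1$ designed so that the terms linear in $(\phi,\zeta)$ telescope into $\partial_t\mathcal{E}$, I expect to obtain a pointwise identity of the shape
\begin{equation*}
\mathcal{E}_t + Q_\xi + \frac{\kappa\hat\theta}{v\theta^2}\zeta_\xi^2 + \tilde u_\xi\,\mathcal{G}(\phi,\zeta) = \mathcal{R}_{BL} + \mathcal{R}_{RW} + \mathcal{R}_G,
\end{equation*}
where $\mathcal{G}(\phi,\zeta)\sim \phi^2+\zeta^2$ is the good rarefaction dissipation (nonnegative thanks to $\tilde u_\xi\geq 0$ in Lemma 2.2(1)), and the three remainders gather respectively the boundary-layer coefficients $\bar z_\xi$, the rarefaction coefficients $\tilde z_\xi$ that do not contribute to $\mathcal{G}$, and the source terms $G_1,G_2$ from (2.44). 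Cubic remainders of order $N(T)(\phi,\psi,\zeta)^2$ produced by Taylor expansion of $p=R\theta/v$ about $(\hat v,\hat\theta)$ are absorbed on the left by smallness of $N(T)$.

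Integrating the identity over $[0,t]\times \R_+$, the boundary flux at $\xi=0$ vanishes since $(\phi,\psi,\zeta)(\cdot,0)=0$, and the far-field flux vanishes by decay. The remainders are then handled as follows. For $\mathcal{R}_{BL}$, the exponential bound $|\partial_\xi^n(\bar z-z_+)|\lesssim \bar\delta e^{-c_0\xi}$ from Lemma 2.1(1), combined with Cauchy--Schwarz and Young's inequality, produces an $O(\bar\delta)$ contribution plus a term of the form $\bar\delta\int_0^t\|(\phi_\xi,\psi_\xi)\|^2 d\tau$, which is allowed on the right-hand side of (3.9). For $\mathcal{R}_{RW}$, I would interpolate between the two alternative $L^p$ bounds on $\tilde z_\xi,\tilde z_{\xi\xi}$ furnished by Lemma 2.2 (one using the amplitude $\epsilon$, the other the time decay $(1+t)^{-1+1/p}$), together with the Gagliardo--Nirenberg bound $\|(\phi,\psi,\zeta)\|_{L^\infty}\lesssim \|(\phi,\psi,\zeta)\|^{1/2}\|(\phi_\xi,\psi_\xi,\zeta_\xi)\|^{1/2}$; balancing the exponents optimally produces the claimed rate $\epsilon^{1/8}\bigl(\int_0^t\|(\phi_\xi,\psi_\xi)\|^2 d\tau+1\bigr)$. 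The $G_1,G_2$ pieces are controlled directly using the pointwise forms in (2.44), giving $O(\bar\delta)+O(\epsilon^{1/8})$.

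The central obstacle is that, because (1.1) has no velocity viscosity, the energy identity yields no left-hand-side control on $\|\phi_\xi\|$ or $\|\psi_\xi\|$. Consequently every occurrence of $\phi_\xi$ or $\psi_\xi$ arising in the remainders must be arranged so that its coefficient is $O(\bar\delta+\epsilon^{1/8})$; the resulting $\int_0^t\|(\phi_\xi,\psi_\xi)\|^2 d\tau$ then survives intact on the right of (3.9), to be closed only later by the higher-order estimates of the subsequent lemmas. Getting precisely the rate $\epsilon^{1/8}$ rather than a weaker one forces the use of the sharper of the two bounds in Lemma 2.2 in each time regime, which is the most delicate bookkeeping step. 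The resulting inequality, together with $\mathcal{E}\sim \phi^2+\psi^2+\zeta^2$, yields (3.9), with the dissipation $\|\sqrt{\tilde u_\xi}(\phi,\zeta)\|^2$ coming from $\iint \tilde u_\xi\,\mathcal{G}(\phi,\zeta)\,d\xi d\tau$ and $\|\zeta_\xi\|^2$ from the heat term $\kappa\hat\theta\zeta_\xi^2/(v\theta^2)$.
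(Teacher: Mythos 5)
Your proposal follows essentially the same route as the paper: the relative-entropy form $E=R\hat\theta\,\Phi(v/\hat v)+\tfrac{\psi^2}{2}+C_v\hat\theta\,\Phi(\theta/\hat\theta)$ is exactly the paper's (3.10), the resulting identity with the heat dissipation $\kappa\hat\theta\zeta_\xi^2/(v\theta^2)$ and the signed rarefaction term $\hat p\,\tilde u_\xi\bigl(\Phi(\theta\hat v/(v\hat\theta))+\gamma\Phi(v/\hat v)\bigr)$ is (3.11), and your treatment of the rarefaction remainders (interpolating the two $L^p$ bounds of Lemma 2.2 against $\|(\phi,\zeta)\|\,\|(\phi_\xi,\zeta_\xi)\|$ to extract $\epsilon^{1/8}$, with the time-integrable weight $(1+\tau)^{-13/8}$ absorbing the non-differentiated factor) and of $G_1,G_2$ matches (3.16)--(3.20).

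The one step you have not justified is the boundary-layer remainder. You assert that $\iint \bar u_\xi(\phi^2+\zeta^2)$ yields ``$O(\bar\delta)$ plus $\bar\delta\int_0^t\|(\phi_\xi,\psi_\xi)\|^2d\tau$'' by ``Cauchy--Schwarz and Young,'' but those tools alone do not deliver this. The generic route, $\int_{\mathbb{R}_+}\bar\delta e^{-c\xi}(\phi^2+\zeta^2)\,d\xi\lesssim\bar\delta\|(\phi,\zeta)\|\,\|(\phi_\xi,\zeta_\xi)\|$, leaves after Young's inequality a term $\bar\delta\int_0^t\|(\phi,\zeta)(\tau)\|^2d\tau$ with no time decay (the boundary layer is stationary, so unlike the rarefaction terms there is no factor $(1+\tau)^{-\alpha}$), and this term cannot be absorbed or bounded by a constant. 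The paper's mechanism is the weighted Poincar\'e inequality (3.14): since $(\phi,\zeta)(\tau,0)=0$, one has $|f(\xi)|\le\sqrt{\xi}\,\|f_\xi\|$, whence
\begin{equation*}
\int_0^t\!\!\int_{\mathbb{R}_+}\bar\delta e^{-c\xi}(\phi^2+\zeta^2)\,d\xi\,d\tau
\lesssim\bar\delta\int_0^t\Bigl(\int_{\mathbb{R}_+}\xi e^{-c\xi}d\xi\Bigr)\|(\phi_\xi,\zeta_\xi)(\tau)\|^2d\tau
\lesssim\bar\delta\int_0^t\|(\phi_\xi,\zeta_\xi)(\tau)\|^2d\tau,
\end{equation*}
which is precisely the admissible form on the right of (3.9). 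You invoked the homogeneous boundary data only to kill the flux at $\xi=0$; it is also indispensable here, and this is the one genuinely non-routine ingredient of the lemma that your write-up omits.
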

\begin{proof} Define the energy form
\begin{equation}\label{3.10}
E=R\hat{\theta}\Phi(\frac{v}{\hat{v}})+\frac{\psi^2}{2}+C_v\hat{\theta}\Phi(\frac{\theta}{\hat{\theta}}),
\end{equation}
where $\Phi(s)=s-1-\ln s$. Obviously, there exists a positive constant C(s) such that

\begin{equation*}
C(s)^{-1}s^2\leq \Phi(s)\leq C(s)s^2,
\end{equation*}
we can get the following estimate
\begin{equation}\label{3.11}
\begin{aligned}
&E_t-s_-E_\xi+\kappa\frac{\hat{\theta}\zeta_\xi^2}{v\theta^2}
+\hat{p}\tilde{u}_\xi(\Phi(\frac{\theta\hat{v}}{v\hat{\theta}})+\ga\Phi(\frac{v}{\hat{v}})) \\
&+((p-\hat{p})\psi-\kappa(\frac{\zeta_\xi}{v}-\frac{\hat{\theta}_\xi\phi}
{v\hat{v}})\frac{\zeta}{\theta})_\xi
=G_3-G_1\psi-G_2\frac{\zeta}{\theta},
\end{aligned}
\end{equation}
where
\begin{equation}\label{3.12}
\begin{aligned}
G_3&=-\hat{p}\bar{u}_\xi(\Phi(\frac{\theta\hat{v}}{v\hat{\theta}})+\ga\Phi(\frac{v}{\hat{v}}))
+[\kappa(\frac{\hat{\theta}_\xi}{\hat{v}})_\xi+G_2][(\ga-1)\Phi(\frac{v}{\hat{v}})
+\Phi(\frac{\theta}{\hat{\theta}})-\frac{\zeta^2}{\theta\hat{\theta}}]\\
&+\kappa\frac{\hat{\theta}_\xi\zeta_\xi\zeta}{v\theta^2}+\kappa(\frac{1}{v}-\frac{1}{\hat{v}})\frac{\zeta\hat{\theta}_\xi^2}
{\theta^2}-\kappa(\frac{1}{v}-\frac{1}{\hat{v}})\frac{\hat{\theta}\hat{\theta}_\xi\zeta_\xi}{\theta^2}.
\end{aligned}
\end{equation}
It is easy to see that
\begin{equation}\label{3.13}
\begin{aligned}
|G_3|\lesssim \frac{1}{4}\frac{\kappa\hat{\theta}\zeta_\xi^2}{v\theta^2}
+|(\bar{u}_\xi,(\bar{\theta}-\theta_m)\tilde{\theta}_\xi+\bar{\theta}_\xi(\tilde{\theta}-\theta_m),
\tilde{\theta}_{\xi\xi}+\tilde{\theta}_\xi^2)|(\phi^2+\zeta^2),
\end{aligned}
\end{equation}
Since
\begin{equation}\label{3.14}
|f(\xi)|=|f(0)+\IX f_y dy|\leq|f(0)|+\sqrt{\xi}\|f_\xi\|,
\end{equation}
 and by the fact that $(\phi,\psi,\zeta)(t,0)=(0,0,0)$, we get
\begin{equation}\label{3.15}
\begin{aligned}
&\IT\int_{R_+}|\bar{u}_\xi|(\phi^2+\zeta^2)d\xi d\tau \\
\lesssim & \bar{\delta}\IT\IXX e^{-c\xi}(\phi^2+\zeta^2)(t,\xi)d\xi d\tau \\
\lesssim & \bar{\delta}\IT\IXX \xi e^{-c\xi}\|(\phi_\xi,\zeta_\xi)(\tau)\|^2d\xi d\tau \\
\lesssim & \bar{\delta}\IT\|(\phi_\xi,\zeta_\xi)(\tau)\|^2d\tau.
\end{aligned}
\end{equation}
By the properties of rarefaction wave as
\begin{equation}\label{3.16}
\begin{aligned}
&\|(\tilde{v}_\xi,\tilde{u}_\xi,\tilde{\theta}_\xi)(t)\|^2\lesssim \va^\frac{1}{8}(1+t)^{-\frac{7}{8}},\\[2mm]
&\|(\tilde{v}_{\xi\xi},\tilde{u}_{\xi\xi},\tilde{\theta}_{\xi\xi})(t)\|_{L^1}\lesssim \va^\frac{1}{8}(1+t)^{-\frac{13}{16}},
\end{aligned}
\end{equation}
we have
\begin{equation}\label{3.17}
\begin{aligned}
&\IT\IXX(|\tilde{\theta}_{\xi\xi}|+|\tilde{\theta}_\xi|^2)(\phi^2+\zeta^2)(t,\xi)d\xi d\tau \\[2mm]
\lesssim &\IT(\|\phi\|\|\phi_\xi\|+\|\zeta\|\|\zeta_\xi\|)(\|\tilde{\theta}_\xi\|^2
+\|\tilde{\theta}_{\xi\xi}\|_{L^1})d\xi d\tau \\[2mm]
\lesssim & \epsilon^{\frac{1}{8}}\IT (1+\tau)^{-\frac{13}{8}}\|(\phi,\zeta)(\tau)\|^2+\|(\phi_\xi,\zeta_\xi)(\tau)\|^2d\tau \\[2mm]
\lesssim & \epsilon^{\frac{1}{8}}\{1+\IT\|(\phi_\xi,\zeta_\xi)(\tau)\|^2d\tau\}.
\end{aligned}
\end{equation}
And the rest term  satisfy
\begin{equation}\label{3.18}
\begin{aligned}
&\IT\IXX((\bar{\theta}-\theta_m)\tilde{\theta}_\xi+\bar{\theta}_\xi(\tilde{\theta}-\theta_m))(\phi^2+\zeta^2)d\xi d\tau\\
\lesssim & \bar{\delta}\IT\IXX e^{-c(\xi+\tau)}(\phi^2+\zeta^2)d\xi d\tau \\
\lesssim & \bar{\delta}\IT(\|\phi\|\|\phi_\xi\|+\|\zeta\|\|\zeta_\xi\|)e^{-c\tau}d\tau \\
\lesssim & \bar{\delta}\{1+\IT\|(\phi_\xi,\zeta_\xi)(\tau)\|^2d\tau\}.
\end{aligned}
\end{equation}
Integrating (\ref{3.11}), and making use of the estimates (\ref{3.13})-(\ref{3.18}), we get
\begin{equation}\label{3.19}
\begin{aligned}
&\|(\phi,\psi,\zeta)(t)\|^2+\IT(\|\zeta_\xi(\tau)\|^2+\|\sqrt{\tilde{u}_\xi}(\phi,\zeta)(\tau)\|^2)d\tau \\
\lesssim &\|(\phi_0,\psi_0,\zeta_0)\|^2+(\bar{\delta}+\epsilon^{\frac{1}{8}})\{1+\IT\|(\phi_\xi,\zeta_\xi)(\tau)\|^2d\tau\}\\[2mm]
&+\IT\IXX|G_1\psi|+|G_2\zeta|d\xi d\tau,
\end{aligned}
\end{equation}
where
\begin{equation}\label{3.20}
\begin{aligned}
&\IT\IXX|G_1\psi|+|G_2\zeta|d\xi d\tau  \\
\lesssim & \IT\IXX\bar{\delta} e^{-c(\xi+\tau)}(\|\psi\|^{\frac{1}{2}}\|\psi_\xi\|^{\frac{1}{2}}+
\|\zeta\|^{\frac{1}{2}}\|\zeta_\xi\|^{\frac{1}{2}})d\xi d\tau\\[2mm]
&+\IT(\|\zeta\|^{\frac{1}{2}}\|\zeta_\xi\|^\frac{1}{2}(\|\tilde{\theta}_\xi\|^2+\|\tilde{\theta}_{\xi\xi}\|_{L^1})d\tau \\
\lesssim & \bar{\delta}\IT e^{-c\tau}\|(\psi,\zeta)(\tau)\|^{\frac{2}{3}}+\|(\psi_\xi,\zeta_\xi)(\tau)\|^2d\tau\\[2mm]
&+\IT \|\zeta(\tau)\|^{\frac{1}{2}}\|\zeta_\xi(\tau)\|^\frac{1}{2}[\epsilon^\frac{1}{8}(1+\tau)^{-\frac{7}{8}}
+\epsilon^\frac{1}{8}(1+\tau)^{-\frac{13}{16}}]d\tau \\
\lesssim & \bar{\delta}\IT e^{-c\tau}(\|(\psi,\zeta)(\tau)\|^2+1)d\tau
+(\bar{\delta}+\epsilon^\frac{1}{8})\IT\|(\psi_\xi,\zeta_\xi)(\tau)\|^2d\tau\\[2mm]
&+\epsilon^\frac{1}{8}\IT(1+\tau)^{-\frac{13}{8}}\|\zeta(\tau)\|^\frac{2}{3}d\tau \\
\lesssim &(\bar{\delta}+\epsilon^{\frac{1}{8}})\{1+\IT\|(\psi_\xi,\zeta_\xi)(\tau)\|^2d\tau\}.
\end{aligned}
\end{equation}
Inserting (\ref{3.20}) into (\ref{3.19}), we can get the estimate (\ref{3.9}) and complete the proof of Lemma 3.1.
\end{proof}

%%%%%%%%%%%%%%%%%%%%%%%%%%%%%%%%%%%%%%%Lemma 3.2
\begin{Lemma} Under the same assumptions listed in Proposition 3.2, if $\bar{\delta},  \epsilon, N(T)$ are suitably small,  then it holds that

\begin{equation}\label{3.21}
\begin{aligned}
&\|(\phi_\xi,\psi_\xi,\zeta_\xi)(t)\|^2+\int^t_0(\psi_\xi^2+\phi_\xi^2+\zeta_\xi^2)(\tau,0)d\tau
+\int^t_0\|\zeta_{\xi\xi}(\tau)\|^2d\tau \\[2mm]
\lesssim & \|(\phi_0,\psi_0,\zeta_0)\|_1^2+(\bar{\delta}+\epsilon^{\frac{1}{8}})
+(\bar{\delta}+\epsilon^{\frac{1}{8}}+N(T))\IT\|(\phi_\xi,\psi_\xi)(\tau)\|^2_1d\tau
\end{aligned}
\end{equation}

\end{Lemma}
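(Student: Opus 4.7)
The plan is to derive a first-order weighted energy estimate by applying $\pa_\xi$ to each equation of (\ref{3.2}) and then pairing with suitably weighted copies of $(\phi_\xi,\psi_\xi,\zeta_\xi)$, in direct analogy with the entropy identity (\ref{3.11}) used for Lemma 3.1. Concretely, I would multiply the $\xi$-derivative of $(\ref{3.2})_1$ by $(R\hat{\theta}/\hat{v}^2)\phi_\xi$, of $(\ref{3.2})_2$ by $\psi_\xi$, and of $(\ref{3.2})_3$ by $\zeta_\xi/\hat{\theta}$. With these weights the principal cross-terms of the form $\psi_{\xi\xi}\phi_\xi$ and $\zeta_{\xi\xi}\psi_\xi$ produced by the pressure gradient $(R\zeta/v-\hat{p}\phi/v)_{\xi\xi}$ and the coupling $(p\psi_\xi)_\xi$ cancel pairwise up to cubic remainders of size $O(N(T))$, exactly as the corresponding pressure terms did at the undifferentiated level in Lemma 3.1.

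After integration over $\mathbb{R}_+$ and summation, the left-hand side yields $\tfrac{d}{dt}\|(\phi_\xi,\psi_\xi,\zeta_\xi)(t)\|^2$ together with the heat-conduction dissipation $\|\zeta_{\xi\xi}\|^2$ coming from $\kappa(\zeta_\xi/v)_{\xi\xi}\cdot\zeta_\xi/\hat{\theta}$ after one integration by parts. The boundary traces $(\phi_\xi^2+\psi_\xi^2+\zeta_\xi^2)(\tau,0)$ that appear in (\ref{3.21}) are extracted separately by using the equations (\ref{3.2}) at $\xi=0$: since $(\phi,\psi,\zeta)(t,0)=(0,0,0)$ forces $(\phi_t,\psi_t,\zeta_t)(t,0)=(0,0,0)$, evaluating at the boundary produces the algebraic relations $|s_-|\phi_\xi(t,0)=\psi_\xi(t,0)$ and $u_-\psi_\xi(t,0)=R\zeta_\xi(t,0)/v_--\hat{p}(t,0)\phi_\xi(t,0)/v_-+v_-G_1(t,0)$, together with a similar relation expressing $\zeta_\xi(t,0)$ in terms of $\zeta_{\xi\xi}(t,0)$; combining these with the half-line trace bound $\zeta_\xi^2(t,0)\leq 2\|\zeta_\xi(t)\|\cdot\|\zeta_{\xi\xi}(t)\|$ controls all three boundary squared traces by the interior dissipation $\|\zeta_{\xi\xi}\|^2$ plus a controllable remainder.

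The right-hand side is estimated at the derivative level using the same devices as Lemma 3.1: background-wave coefficients are decomposed via $\hat{z}=\bar{z}+\tilde{z}-z_m$, with the boundary-layer part decaying like $\bar{\delta}e^{-c\xi}$ by (\ref{2.24-1}) and the rarefaction part controlled through (\ref{2.34}), so Hardy-type manipulations in the spirit of (\ref{3.14})--(\ref{3.18}) supply $(\bar{\delta}+\epsilon^{1/8})$ prefactors; the source terms $G_{1\xi}\psi_\xi$ and $G_{2\xi}\zeta_\xi$ are handled directly from the explicit form (\ref{2.44}) of $G_1,G_2$; the cubic nonlinearities produced by expanding $(R\zeta/v-\hat{p}\phi/v)_{\xi\xi}$ and $\hat{u}_\xi(p-\hat{p})$ are controlled by the Sobolev embedding $H^1\hookrightarrow L^\infty$ together with $N(T)$-smallness.

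The principal obstacle is the absence of interior dissipation for $\phi_\xi$ and $\psi_\xi$, so commutator terms of the form $\phi_{\xi\xi}\psi_\xi$ or $\psi_{\xi\xi}\phi_\xi$ that survive the leading cancellation cannot be absorbed into interior dissipation at this stage. I will treat them by either (a) one further integration by parts so that the extra $\xi$-derivative falls on a background-wave coefficient and picks up smallness from $\bar{\delta}e^{-c\xi}$ or $\|\tilde{z}_\xi\|_{L^\infty}$, or (b) Cauchy--Schwarz followed by deposition on the right of (\ref{3.21}) inside the small-prefactor integral $(\bar{\delta}+\epsilon^{1/8}+N(T))\int_0^t\|(\phi_\xi,\psi_\xi)\|_1^2\,d\tau$, which is to be closed at the next step by the higher-order estimates.
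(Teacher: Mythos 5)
Your interior estimate is essentially the paper's, up to an integration by parts: the paper multiplies the \emph{undifferentiated} equations by $-\tfrac{\hat p}{\hat v}\phi_{\xi\xi}$, $-\psi_{\xi\xi}$, $-\tfrac{\zeta_{\xi\xi}}{\hat\theta}$ (see (\ref{3.22})--(\ref{3.24})), which is your ``differentiate, then pair with weighted first derivatives'' scheme modulo total $\xi$-derivatives, and the cancellation of the $\phi_\xi\psi_{\xi\xi}+\psi_\xi\phi_{\xi\xi}$ and $\zeta_\xi\psi_{\xi\xi}+\psi_\xi\zeta_{\xi\xi}$ pairs, the $\kappa\|\zeta_{\xi\xi}\|^2$ dissipation, and the deposit of the surviving commutators into $(\bar\delta+\epsilon^{1/8}+N(T))\int_0^t\|(\phi_\xi,\psi_\xi)\|_1^2$ all match. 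Where you genuinely diverge is the boundary. Because the two multiplier choices differ by an integration by parts, the convective flux $\tfrac{s_-}{2}\{\tfrac{\hat p}{\hat v}\phi_\xi^2+\psi_\xi^2+C_v\tfrac{\zeta_\xi^2}{\hat\theta}\}_\xi$ changes sign between the two versions: in the paper's arrangement the boundary contribution is the quadratic form $\tfrac{|s_-|}{2}(\tfrac{p_-}{v_-}\phi_\xi^2+\psi_\xi^2)-\tfrac{p_-}{v_-}\phi_\xi\psi_\xi$ sitting on the \emph{left}, and its positive definiteness is exactly the discriminant computation (\ref{3.28}), i.e.\ the supersonic condition $u_-^2>R\theta_-$; only the single cross term $\tfrac{R}{v_-}\zeta_\xi\psi_\xi(\tau,0)$ needs the trace inequality (\ref{3.30}). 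In your arrangement the same traces appear as \emph{sources}, so your extra step --- reducing $\phi_\xi(\tau,0),\psi_\xi(\tau,0)$ to $\zeta_\xi(\tau,0)$ via the compatibility relations and then using $\zeta_\xi^2(\tau,0)\le 2\|\zeta_\xi\|\,\|\zeta_{\xi\xi}\|$ --- is not optional but essential, and it imports into Lemma~3.2 the boundary algebra the paper only needs later (in (\ref{3.74})--(\ref{3.78}) for the second-order estimate). Both routes close; the paper's buys the good-signed boundary integral for free, yours buys a uniform recipe that also handles the $\zeta_{\xi\xi}(\tau,0)\zeta_\xi(\tau,0)$ term coming from integrating the heat flux by parts, provided you use $(\ref{3.2})_3$ at $\xi=0$ as you indicate.

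One point needs care. Your second boundary relation has the signs wrong: from $(\ref{3.2})_2$ at $\xi=0$ one gets $-s_-\psi_\xi(t,0)+\tfrac{R}{v_-}\zeta_\xi(t,0)-\tfrac{p_-}{v_-}\phi_\xi(t,0)=-G_1(t,0)$, and combined with $\psi_\xi(t,0)=-s_-\phi_\xi(t,0)$ this gives $(s_-^2-\tfrac{p_-}{v_-})\phi_\xi(t,0)=-\tfrac{R}{v_-}\zeta_\xi(t,0)-G_1(t,0)$. The coefficient is $(u_-^2-R\theta_-)/v_-^2$, so solvability for $\phi_\xi(t,0)$ in terms of $\zeta_\xi(t,0)$ requires precisely $u_-^2\neq R\theta_-$ --- the same nondegeneracy the paper exploits through the discriminant. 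With your signs as written the coefficient would be $(u_-^2+R\theta_-)/v_-^2$, which never vanishes, so the sign slip hides the one place where the hypothesis $z_+\in\tilde\Omega^+_{supper}$ actually enters the argument. Fix the signs and state where the condition is used; otherwise the proposal is sound.
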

\begin{proof}  Multiplying  $(\ref{3.2})_1$ by $-\frac{\hat{p}}{\hat{v}}\phi_{\xi\xi}$ and $(\ref{3.2})_2$ by $-\psi_{\xi\xi}$, $(\ref{3.2})_3$ by $-\frac{\zeta_{\xi\xi}}{\hat{\theta}}$ and adding the results, we can get
\begin{equation}\label{3.22}
\begin{aligned}
&\frac{1}{2}(\frac{\hat{p}}{\hat{v}}\phi_\xi^2+\psi_\xi^2+C_v\frac{\zeta_\xi^2}{\hat{\theta}})_t+\kappa\frac{\zeta_{\xi\xi}^2}{v\hat{\theta}}
+\frac{s_-}{2}\{\frac{\hat{p}}{\hat{v}}\phi_\xi^2+\psi_\xi^2+C_v\frac{\zeta_\xi^2}{\hat{\theta}}\}_\xi\\
&+\{\frac{\hat{p}}{\hat{v}}\phi_\xi\psi_\xi-\frac{R}{\hat{v}}\zeta_\xi\psi_\xi\}_\xi
-(\frac{\hat{p}}{\hat{v}}\phi_\xi\phi_t+\psi_\xi\psi_t+C_v\frac{\zeta_\xi}{\hat{\theta}}\zeta_t)_\xi
=F_1.
\end{aligned}
\end{equation}
where
\begin{equation}\label{3.23}
\begin{aligned}
F_1&=\frac{1}{2}[(\frac{\hat{p}}{\hat{v}})_t+s_-(\frac{\hat{p}}{\hat{v}})_\xi]\phi_\xi^2+
\frac{C_v}{2}[(\frac{1}{\hat{\theta}})_t+s_-(\frac{1}{\hat{\theta}})_\xi]\zeta_\xi^2\\
&-(\frac{\hat{p}}{\hat{v}})_\xi\phi_t\phi_\xi-C_v(\frac{1}{\hat{\theta}})_\xi\zeta_t\zeta_\xi
+(p-\hat{p})\psi_\xi\frac{\zeta_{\xi\xi}}{\hat{\theta}}+\hat{u}_\xi(p-\hat{p})\frac{\zeta_{\xi\xi}}{\hat{\theta}}\\
&-\kappa(\frac{1}{v})_\xi\zeta_\xi\frac{\zeta_{\xi\xi}}{\hat{\theta}}
+\kappa(\frac{\hat{\theta}_\xi\phi}{v\hat{v}})_\xi\frac{\zeta_{\xi\xi}}{\hat{\theta}}
+G_1\psi_{\xi\xi}+G_2\frac{\zeta_{\xi\xi}}{\hat{\theta}}-(\frac{(R\zeta-\hat{p}\phi)\phi}{v\hat{v}})_\xi\psi_{\xi\xi}\\
&+(\frac{\hat{p}}{\hat{v}})_\xi\phi_\xi\psi_\xi
-(\frac{R}{\hat{v}})_\xi\psi_\xi\zeta_\xi+[(\frac{1}{\hat{v}})_\xi(R\zeta-\hat{p}\phi)
-(\frac{\hat{p}_\xi}{\hat{v}})\phi]\psi_{\xi\xi}.
\end{aligned}
\end{equation}
It is easy to see that

\begin{equation}\label{3.24}
\begin{aligned}
|F_1|&\lesssim(|\bar{v}_\xi|+|\tilde{v}_\xi|+N(T))(\phi_\xi^2+\psi_\xi^2+\zeta_\xi^2+\psi_{\xi\xi}^2+\zeta_{\xi\xi}^2)\\[2mm]
&+(|\tilde{u}_\xi|+|\bar{u}_\xi|)(\phi^2+\zeta^2)+\hat{v}_\xi G_2^2+G_2\zeta_{\xi\xi}+G_1\psi_{\xi\xi},
\end{aligned}
\end{equation}

and
\begin{equation}\label{3.25}
\begin{aligned}
\IT\IXX \hat{v}_\xi G_2^2d\xi d\tau
\lesssim & \IT\IXX(\tilde{v}_\xi+\bar{v}_\xi)(\bar{\delta}e^{-c(\xi+t)}+|\tilde{\theta}_{\xi\xi}|^2
+|\tilde{\theta}_\xi|^4)d\xi d\tau \\
\lesssim &\bar{\delta}+\IT(\|\tilde{\theta}_{\xi\xi}\|_{L^\infty}^2+\|\tilde{\theta}_\xi\|_{L^\infty}^4)
(\|\bar{v}_\xi\|_{L^1}+\|\tilde{v}_\xi\|_{L^1})d\tau  \\
\lesssim &\bar{\delta}+\epsilon^{\frac{1}{8}}\IT(1+\tau)^{-\frac{39}{28}}d\tau \\
\lesssim & \bar{\delta}+\epsilon^{\frac{1}{8}},
\end{aligned}
\end{equation}

\begin{equation}\label{3.26}
\begin{aligned}
&\IT\IXX G_2\zeta_{\xi\xi}+G_1\psi_{\xi\xi}d\xi d\tau \\
\lesssim & \bar{\delta}\IT\IXX e^{-c(\xi+t)}(\zeta_{\xi\xi}+\psi_{\xi\xi})d\xi d\tau+\IT\IXX(|\tilde{\theta}_{\xi\xi}|+|\tilde{\theta}_\xi|^2)\zeta_{\xi\xi}d\xi d\tau \\
\lesssim & \bar{\delta}(1+\IT\|(\zeta_{\xi\xi},\psi_{\xi\xi})(\tau)\|^2d\tau)
+\IT(\|\tilde{\theta}_{\xi\xi}\|+\|\tilde{\theta}_\xi\|_{L^4}^2)\|\zeta_{\xi\xi}(\tau)\|d\tau \\
\lesssim & \bar{\delta}(1+\IT\|(\zeta_{\xi\xi},\psi_{\xi\xi})(\tau)\|^2d\tau)
+\IT \epsilon^{\frac{1}{8}}(1+\tau)^{-\frac{3}{4}}\|\zeta_{\xi\xi}(\tau)\|d\tau \\
\lesssim & \bar{\delta}(1+\IT\|(\zeta_{\xi\xi},\psi_{\xi\xi})(\tau)\|^2d\tau)
+ \epsilon^{\frac{1}{8}}\IT[(1+\tau)^{-\frac{3}{2}}+\|\zeta_{\xi\xi}(\tau)\|^2]d\tau \\
\lesssim &(\bar{\delta}+\epsilon^{\frac{1}{8}})\{1+\IT\|(\zeta_{\xi\xi},\psi_{\xi\xi})(\tau)\|^2d\tau\}.
\end{aligned}
\end{equation}
Integrating (\ref{3.22}) over $[0,t]\times\mathbb{R}_+$, and making use of (\ref{3.9}), (\ref{3.15}), (\ref{3.24})-(\ref{3.26}), we get
\begin{equation}\label{3.27}
\begin{aligned}
&\|(\phi_\xi,\psi_\xi,\zeta_\xi)(t)\|^2+\IT \|\frac{\zeta_{\xi\xi}}{\sqrt{v\hat{\theta}}}(\tau)\|^2d\tau \\
&+\IT[\frac{|s_-|}{2}(\frac{p_-}{v_-}\phi_\xi^2+\psi_\xi^2+C_v\frac{\zeta_\xi^2}{\theta_-})
-\frac{p_-}{v_-}\phi_\xi\psi_\xi+\frac{R}{v_-}\zeta_\xi\psi_\xi](\tau,0)d\tau \\
\lesssim & \|(\phi_0,\psi_0,\zeta_0)\|_1^2
+\bar{\delta}+\eps^{\frac{1}{8}}+(\epsilon^{\frac{1}{8}}
+\bar{\delta}+N(T))\IT\|(\phi_\xi,\psi_\xi)(\tau)\|_1^2d\tau.
\end{aligned}
\end{equation}

Then we should deal with the boundary terms.
Since $z_-\in\Omega_{+}$, see (\ref{2.24}), that is, $R\theta_-<u^2_-<\gamma R\theta_-$,
the discriminant of the quadratic form
\begin{equation*}
\frac{|s_-|}{2}(\frac{p_-}{v_-}\phi_\xi^2+\psi_\xi^2)-\frac{p_-}{v_-}\phi_\xi\psi_\xi
\end{equation*}
is less than zero, i.e.
\begin{equation}\label{3.28}
\begin{aligned}
D&=(\frac{p_-}{v_-})^2-4\times\frac{|s_-|}{2}\frac{p_-}{v_-}\times\frac{|s_-|}{2}\\
&=(\frac{p_-}{v_-})(\frac{p_-}{v_-}-|s_-|^2)=(\frac{p_-}{v_-})(\frac{R\theta_-}{v_-^2}-\frac{u_-^2}{v_-^2})<0,
\end{aligned}
\end{equation}
thus, the binomial expression is positive, we get for some constant $c_0>0$ such that
\begin{equation}\label{3.29}
\begin{aligned}
\IT[\frac{|s_-|}{2}(\frac{p_-}{v_-}\phi_\xi^2+\psi_\xi^2+C_v\frac{\zeta_\xi^2}{\theta_-})-\frac{p_-}{v_-}\phi_\xi\psi_\xi]
(\tau,0) d\tau
\geq c_0 \IT (\phi_\xi^2+\psi_\xi^2+\zeta_\xi^2)(\tau,0)d\tau.
\end{aligned}
\end{equation}
Secondly by the Sobolev inequality, it holds that
\begin{equation}\label{3.30}
\begin{aligned}
&\IT\frac{R}{v_-}(\zeta_\xi\psi_\xi)(\tau,0)d\tau \\[2mm]
\lesssim & \frac{c_0}{4}\IT\psi_\xi^2(\tau,0)d\tau+\IT\|\zeta_\xi(\tau)\|^2_{\infty}d\tau\\[2mm]
\lesssim & \frac{c_0}{4}\IT\psi_\xi^2(\tau,0)d\tau
+\frac{1}{4}\IT\|\frac{\zeta_{\xi\xi}}{\sqrt{v\hat{\theta}}}(\tau)\|^2d\tau+\IT\|\zeta_\xi(\tau)\|^2d\tau.
\end{aligned}
\end{equation}
Inserting (\ref{3.29}), (\ref{3.30}) into (\ref{3.27}) and using the result of (\ref{3.9}),
 we get the estimate of (\ref{3.21}) and  complete the proof of Lemma 3.2.
\end{proof}

As for $\int^t_0\|(\phi_\xi,\psi_\xi)(\tau)\|^2d\tau$, we have following Lemma.

%%%%%%%%%%%%%%%%%%%%%%%%%%%%%%%%%%%%%%%%%%%%%%%%Lemma 3.3
\begin{Lemma}
Under the same assumptions listed in Proposition 3.2,
if $\bar{\delta},\eps, N(T)$ are suitably small, it holds that
\begin{eqnarray}\label{3.31}
\begin{aligned}
&\int^t_0\|(\phi_\xi,\psi_\xi)(\tau)\|^2 d\tau \\[2mm]
\lesssim &\|(\phi_0,\psi_0,\zeta_0)\|^2_1+\bar{\delta}+\epsilon^{\frac{1}{8}}+
(\bar{\delta}+\epsilon^{\frac{1}{8}}+N(T))\IT\|(\phi_{\xi\xi},\psi_{\xi\xi})(\tau)\|^2d\tau.
\end{aligned}
\end{eqnarray}
\end{Lemma}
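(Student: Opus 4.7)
To extract dissipation for $\psi_\xi$, I multiply the energy equation $(3.2)_3$ by $\psi_\xi$ and integrate over $\R_+$. The coercive quantity $\int p\,\psi_\xi^2\,d\xi \geq c\|\psi_\xi\|^2$ emerges on the left-hand side, and rearranging gives
\[
\int p\,\psi_\xi^2\,d\xi = -C_v\int\zeta_t\psi_\xi\,d\xi + s_-C_v\int\zeta_\xi\psi_\xi\,d\xi + \int \kappa\Bigl(\frac{\zeta_\xi}{v}-\frac{\hat\theta_\xi\phi}{v\hat v}\Bigr)_{\!\xi}\psi_\xi\,d\xi - \int\hat u_\xi(p-\hat p)\psi_\xi\,d\xi - \int G_2\psi_\xi\,d\xi.
\]
The diffusive term is the key: expanding $(\zeta_\xi/v)_\xi = \zeta_{\xi\xi}/v - \zeta_\xi v_\xi/v^2$, its leading piece $\int\kappa\zeta_{\xi\xi}\psi_\xi/v\,d\xi$ is bounded by $\|\zeta_{\xi\xi}\|\|\psi_\xi\| \leq \epsilon\|\psi_\xi\|^2 + C\|\zeta_{\xi\xi}\|^2$, and after time-integration Lemma~3.2 supplies $\int_0^t\|\zeta_{\xi\xi}\|^2\,d\tau \lesssim \|(\phi_0,\psi_0,\zeta_0)\|_1^2 + \bar\delta+\epsilon^{1/8} + (\bar\delta+\epsilon^{1/8}+N(T))\int_0^t\|(\phi_\xi,\psi_\xi)\|_1^2\,d\tau$, which is exactly the mechanism producing the small-coefficient second-derivative term on the RHS of (3.31). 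The remaining contributions are controlled by Cauchy--Schwarz and Young's inequality: $\int\zeta_t\psi_\xi\,d\xi$ is rewritten as $-\int\zeta_{t\xi}\psi\,d\xi$ via integration by parts in $\xi$ (the boundary vanishes because $\psi(t,0)=0$) and bounded using the uniform estimate $\|\psi\|\lesssim N(T)$ from Lemma~3.1; the boundary contribution $\kappa\zeta_\xi\psi_\xi/v|_{\xi=0}$ is absorbed by the boundary integrals $\int_0^t(\psi_\xi^2+\zeta_\xi^2)(\tau,0)\,d\tau$ from Lemma~3.2; and the $\hat u_\xi$ and $G_2$ terms yield $\bar\delta+\epsilon^{1/8}$ thanks to the exponential decay of $\bar z$ and the algebraic decay of $\tilde z$.

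\textbf{Estimate for $\phi_\xi$.} For $\int_0^t\|\phi_\xi\|^2\,d\tau$, I multiply $(3.2)_2$ by $-\phi_\xi$ and integrate. Expanding $p-\hat p = R\zeta/v - \hat p\phi/v$, the principal term $-\int(p-\hat p)_\xi\phi_\xi\,d\xi$ produces $\int(\hat p/v)\phi_\xi^2\,d\xi$, giving the $\phi_\xi$-dissipation. The time-derivative term $-\int\psi_t\phi_\xi\,d\xi$ is handled by integrating by parts in $\xi$ (boundary vanishes by $\psi(t,0)=0$) and substituting the continuity equation $\phi_t = s_-\phi_\xi+\psi_\xi$ from $(3.2)_1$, yielding $-\frac{d}{dt}\int\psi\phi_\xi\,d\xi - s_-\int\psi_\xi\phi_\xi\,d\xi - \|\psi_\xi\|^2$; the cross term cancels against its counterpart from the $-s_-\psi_\xi$ term in $(3.2)_2$. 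Rearranging,
\[
\int\frac{\hat p}{v}\phi_\xi^2\,d\xi = \|\psi_\xi\|^2 + \frac{d}{dt}\int\psi\phi_\xi\,d\xi + \mathcal R(t),
\]
with $\mathcal R$ collecting $\int R\zeta_\xi\phi_\xi/v\,d\xi$ (absorbed via Young's into $\tfrac{1}{2}\int(\hat p/v)\phi_\xi^2\,d\xi + C\|\zeta_\xi\|^2$, with $\int_0^t\|\zeta_\xi\|^2\,d\tau \lesssim \bar\delta+\epsilon^{1/8}$ from Lemma~3.1), background-decay terms of order $\bar\delta+\epsilon^{1/8}$, and $\int G_1\phi_\xi\,d\xi$. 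After time-integration, the boundary $[\int\psi\phi_\xi\,d\xi]_0^t$ is dominated by $\|(\phi_0,\psi_0)\|_1^2$ together with the pointwise bound on $\|(\psi,\phi_\xi)(t)\|^2$ from Lemma~3.2.

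\textbf{Closure and main obstacle.} Substituting the $\psi_\xi$-bound into the $\phi_\xi$-bound and absorbing the self-referential $\int_0^t\|(\phi_\xi,\psi_\xi)\|^2\,d\tau$ on the right (valid because $\bar\delta+\epsilon^{1/8}+N(T)$ is small by hypothesis) yields (3.31). The central obstacle is the degeneracy of the hyperbolic subsystem of $(3.2)$: neither $\phi_\xi$ nor $\psi_\xi$ enjoys direct dissipation, so the dissipation for $\psi_\xi$ must be drawn from the sole parabolic equation $(3.2)_3$ through the coupling $p\psi_\xi$, which unavoidably forces the second-order derivative $\zeta_{\xi\xi}$ into the estimate. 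The time-integral $\int_0^t\|\zeta_{\xi\xi}\|^2\,d\tau$ is controlled by Lemma~3.2, but at the cost of introducing the small-coefficient $(\bar\delta+\epsilon^{1/8}+N(T))\int_0^t\|(\phi_{\xi\xi},\psi_{\xi\xi})\|^2\,d\tau$ on the RHS---precisely the form required for the eventual absorption when this lemma is combined with Lemma~3.2 to close the second-derivative estimates of Proposition~3.2.
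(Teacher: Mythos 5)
Your overall strategy coincides with the paper's: the paper multiplies $(\ref{3.2})_3$ by $\psi_\xi$ to extract $\int\hat p\,\psi_\xi^2$ and $(\ref{3.2})_2$ by $-\tfrac{\hat p}{2}\phi_\xi$ to extract $\int\tfrac{\hat p^2}{2v}\phi_\xi^2$, adds the two so the spurious $-\tfrac{\hat p}{2}\psi_\xi^2$ is absorbed, and invokes Lemmas 3.1--3.2 for $\int_0^t\|(\zeta_\xi,\zeta_{\xi\xi})\|^2d\tau$ and the boundary integrals. Most of your proposal reproduces this. However, there is one genuine gap: your treatment of the term $-C_v\int\zeta_t\psi_\xi\,d\xi$. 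You integrate by parts in $\xi$ to get $C_v\int\zeta_{t\xi}\psi\,d\xi$ and bound it by $\|\zeta_{t\xi}\|\,\|\psi\|\lesssim N(T)\|\zeta_{t\xi}\|$. This quantity is not integrable in time: the available dissipation controls $\int_0^t\|\zeta_{t\xi}\|^2d\tau$ (and that only via Lemma 3.4, which logically follows this lemma), not $\int_0^t\|\zeta_{t\xi}\|\,d\tau$, and there is no estimate whatsoever for $\int_0^t\|\psi\|^2d\tau$ --- the only zeroth-order dissipation in Lemma 3.1 is $\|\sqrt{\tilde u_\xi}(\phi,\zeta)\|^2$, which does not even contain $\psi$ and whose weight decays in time. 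So after time integration this term grows like $\sqrt{t}$ and the estimate does not close.

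The paper's fix (see \eqref{3.33}) is to write $C_v\zeta_t\psi_\xi=(C_v\zeta\psi_\xi)_t-C_v\zeta\psi_{t\xi}$, move the total time derivative to the left, integrate the remaining term by parts in $\xi$ to obtain $C_v\zeta_\xi\psi_t$ (plus a vanishing boundary/flux term), and then eliminate $\psi_t$ by the momentum equation $\psi_t=s_-\psi_\xi-(p-\hat p)_\xi-G_1$. Everything is then expressed through $\zeta_\xi,\psi_\xi,\phi_\xi,\zeta_{\xi\xi},G_1$, all of which are time-integrable by Lemmas 3.1--3.2 or absorbable into the dissipation. Note that this substitution reintroduces a coupling $\int\zeta_\xi\,\tfrac{\hat p}{v}\phi_\xi\,d\xi\lesssim\eta\|\phi_\xi\|^2+C_\eta\|\zeta_\xi\|^2$, so the $\psi_\xi$-estimate does not in fact close independently of the $\phi_\xi$-estimate as your ``closure'' paragraph assumes; the two must be combined (as in \eqref{3.34}) before absorbing the small-coefficient $\int_0^t\|(\phi_\xi,\psi_\xi)\|^2d\tau$ terms. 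With these two corrections your argument becomes the paper's proof.
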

\begin{proof}
$(\ref{3.2})_2$ multiplies $-\frac{\widehat{p}}{2}\phi_\xi$, it holds that

\begin{eqnarray}\label{3.32}
\begin{aligned}
&-(\frac{\widehat{p}}{2}\phi_\xi \psi )_t +(\frac{\widehat{p}}{2})_t\psi\phi_\xi
+ (\frac{\widehat{p}}{2}\phi_t \psi )_\xi- \frac{\widehat{p}_\xi}{2} \psi(s_-\phi_\xi+\psi_\xi)
-\frac{\widehat{p}}{2}\psi^2_\xi+\frac{\widehat{p}^2}{2v}\phi^2_\xi\\[2mm]
&-(\frac{R}{v}\zeta)_\xi\frac{\widehat{p}}{2}\phi_\xi+ \frac{\widehat{p}}{2}\phi_\xi(\frac{\widehat{p}}{v})_\xi\phi=G_1\frac{\widehat{p}}{2}\phi_\xi.
\end{aligned}
\end{eqnarray}

$(\ref{3.2})_3$ multiplies $\psi_\xi$, it holds that

\begin{eqnarray}\label{3.33}
\begin{aligned}
&(C_v\zeta \psi_\xi )_t - (C_v\zeta \psi_t )_\xi- C_v\zeta_\xi((p-\widehat{p})_\xi+G_1)+\widehat{p}\psi^2_\xi\\[2mm]
&=\kappa (\frac{\zeta_\xi}{v}- \frac{\widehat{\theta}_\xi \phi}{\widehat{v} v})_\xi\psi_\xi
-(p-\widehat{p}) \widehat{u}_\xi\psi_\xi-(p-\widehat{p})\psi^2_\xi-G_2\psi_\xi.
\end{aligned}
\end{eqnarray}
Combining together, we get

\begin{eqnarray}\label{3.34}
\begin{aligned}
&(C_v\zeta \psi_\xi -\frac{\widehat{p}}{2}\phi_\xi \psi )_t -(C_v\zeta \psi_t-\frac{\widehat{p}}{2}\phi_t \psi )_\xi
+\frac{\widehat{p}}{2}\psi^2_\xi+\frac{\widehat{p}^2}{2v}\phi^2_\xi\\[2mm]
= & \frac{\widehat{p}_\xi}{2} \psi(s_-\phi_\xi+\psi_\xi)-(\frac{\widehat{p}}{2})_t\psi\phi_\xi+(\frac{R}{v}\zeta)_\xi\frac{\widehat{p}}{2}\phi_\xi- \frac{\widehat{p}}{2}\phi_\xi(\frac{\widehat{p}}{v})_\xi\phi
+C_v\zeta_\xi((p-\widehat{p})_\xi+G_1)\\[2mm]
&+\kappa (\frac{\zeta_\xi}{v}- \frac{\widehat{\theta}_\xi \phi}{\widehat{v} v})_\xi\psi_\xi
-(p-\widehat{p}) \widehat{u}_\xi\psi_\xi-(p-\widehat{p})\psi^2_\xi+G_1\frac{\widehat{p}}{2}\phi_\xi-G_2\psi_\xi\\[2mm]
\lesssim & O(1)|\widehat{u}_\xi||(\phi,\psi,\xi)||(\phi_\xi,\psi_\xi,\zeta_\xi)|+O(1)(\overline{\delta}+ \eps^\frac{1}{8}+N(t))|(\phi^2_\xi,\psi^2_\xi,\zeta^2_\xi)|\\[2mm]
&+|\phi_\xi \zeta_\xi|+|\psi_\xi \zeta_{\xi\xi}|+|G_1\phi_\xi|+|G_2\psi_\xi|
+|G_1\zeta_\xi|+|\zeta_\xi|^2.
\end{aligned}
\end{eqnarray}
Integrating above euqation over $[0,t]\times\mathbb{R}_+$, it holds
that

\begin{eqnarray}\label{3.35}
\begin{aligned}
&\int^t_0\int_{\mathbb{R}_+}(\phi^2_\xi+\psi^2_\xi) d\xi d\tau\\[2mm]
\lesssim & \|(\phi_\xi,\psi_\xi,\psi,\zeta)(t)\|^2+ \|(\phi_{0\xi},\psi_{0\xi},\psi_0,\zeta_0)(t)\|^2+\bar{\delta}+\eps^{\frac{1}{8}}\\[2mm]
&+ \int^t_0\|(\zeta_\xi,\zeta_{\xi\xi})(\tau)\|^2d\tau
 +  \int^t_0\int_{\mathbb{R}_+}|\widehat{u}_\xi||(\phi,\xi)|^2d\xi dt\\[2mm]
&+(\frac{1}{4}+\overline{\delta}+\eps^{\frac{1}{8}} +N(T))\int^t_0\int_{\mathbb{R}_+}(\phi^2_\xi+\psi^2_\xi) d\xi d\tau.
\end{aligned}
\end{eqnarray}

Using the result of (\ref{3.9}), (\ref{3.15}) and (\ref{3.21}) into (\ref{3.35}), we could get (\ref{3.31}) under our assumptions and complete the proof of Lemma 3.3.
\end{proof}

Combining the results of Lemma 3.1-Lemma 3.3, we get
\begin{equation}\label{3.39}
\begin{aligned}
&\|(\phi,\psi,\zeta)(t)\|^2_1+\int^t_0(\psi_\xi^2+\phi_\xi^2+\zeta_\xi^2)(\tau,0)d\tau
+\int^t_0\|(\phi_\xi,\psi_\xi,\zeta_\xi,\zeta_{\xi\xi})(\tau)\|^2d\tau \\[2mm]
\lesssim & \|(\phi_0,\psi_0,\zeta_0)\|_1^2
+\bar{\delta}+\epsilon^{\frac{1}{8}}+(\bar{\delta}+\epsilon^{\frac{1}{8}}+N(T))
\IT\|(\phi_{\xi\xi},\psi_{\xi\xi})(\tau)\|^2d\tau.
\end{aligned}
\end{equation}

To control the higher boundary terms later,  we need the estimates of the normal direction.

%%%%%%%%%%%%%%%%%%%%%%%%%%%%%%%%%%%%%%%%%%%%%%%Lemma 3.4
\begin{Lemma}Under the same  assumptions listed in Proposition 3.2, if $\bar{\delta}, \epsilon, N(T)$ are suitably small, it holds that
\begin{equation}\label{3.50}
\begin{aligned}
&\|(\phi_t,\psi_t,\zeta_t)(t)\|^2+\IT\|\zeta_{t\xi}(\tau)\|^2d\tau\\
\lesssim &\|(\phi_0,\psi_0,\zeta_0)\|_2^2+\|(\phi_t,\psi_t,\zeta_t)(0)\|^2+\bar{\delta}+\epsilon^{\frac{1}{8}}
+(\bar{\delta}+\epsilon^{\frac{1}{8}}+N(T))\IT\|(\phi_{\xi\xi},\psi_{\xi\xi})(\tau)\|^2.
\end{aligned}
\end{equation}
\end{Lemma}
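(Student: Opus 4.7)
The plan is to differentiate the system (\ref{3.2}) in $t$ and perform the weighted energy estimate that mirrors the one used for Lemma 3.2, applied now to the unknowns $(\Phi,\Psi,Z):=(\phi_t,\psi_t,\zeta_t)$. Differentiating $(\ref{3.2})_{1,2,3}$ in $t$ yields a system of the same principal type as (\ref{3.2}): hyperbolic in $\Phi,\Psi$ and parabolic in $Z$ through the dissipation $\kappa(Z_\xi/v)_\xi$, with commutator corrections from $\partial_t$ hitting the background coefficients $\hat v,\hat p,\hat\theta$, plus new forcings $G_{1t},G_{2t}$. Testing the $t$-differentiated equations against $\tfrac{\hat p}{\hat v}\Phi$, $\Psi$ and $\tfrac{Z}{\hat\theta}$ respectively, summing and integrating on $[0,t]\times\mathbb{R}_+$ produces on the left $\tfrac{1}{2}\bigl(\tfrac{\hat p}{\hat v}\Phi^2+\Psi^2+C_v\tfrac{Z^2}{\hat\theta}\bigr)$ (integrated in $\xi$) and the dissipation $\kappa\int_0^t\!\!\int_{\mathbb{R}_+}\tfrac{Z_\xi^2}{v\hat\theta}\,d\xi d\tau$. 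Crucially, the time-independent Dirichlet data in (\ref{3.2}) give $(\Phi,\Psi,Z)(t,0)=(0,0,0)$, so every boundary trace from $\xi$-integration by parts vanishes automatically and no discriminant argument of type (\ref{3.28}) is needed. The initial value $\|(\Phi,\Psi,Z)(0)\|^2$ is the term $\|(\phi_t,\psi_t,\zeta_t)(0)\|^2$ on the right of (\ref{3.50}); reading $(\ref{3.2})$ at $t=0$ it is dominated by $\|(\phi_0,\psi_0,\zeta_0)\|_2^2$, explaining the appearance of the $H^2$ norm on the right.

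The error terms are organized as in the proofs of Lemmas 3.1--3.3. Commutators arising from $\partial_t$ of the background $(\hat v,\hat p,\hat\theta)$ can be rewritten, via $\partial_t\hat z=-s_-\partial_\xi\hat z+O(\tilde z_t+s_-\tilde z_\xi)$, as a combination of exponentially decaying factors from $\bar z_\xi$ and rarefaction factors from $\tilde z_\xi$; the former are absorbed through the Hardy-type estimate (\ref{3.14})--(\ref{3.15}) at the cost $\bar\delta\int_0^t\|(\Phi_\xi,\Psi_\xi,Z_\xi)\|^2d\tau$, and the latter through (\ref{3.16})--(\ref{3.17}) at the cost $\epsilon^{1/8}$ times the same quantity. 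Cubic nonlinearities of schematic form $\Phi\,\phi_\xi\,\psi_\xi$ or $Z\,\zeta_\xi\,\phi_\xi$ are bounded by $N(T)$ times the dissipation already controlled in (\ref{3.39}). The forcings $G_{1t}$ and $G_{2t}$, obtained by $t$-differentiating (\ref{2.44}), keep the $O(\bar\delta e^{-c(\xi+t)})$ piece and acquire extra rarefaction contributions of the form $\tilde\theta_{\xi\xi t}+\tilde\theta_{\xi t}\tilde\theta_\xi$, whose time-integrated $L^2_\xi$ norms are $\lesssim\epsilon^{1/8}$ by Lemma \ref{lemma2}. Summing these bounds yields $\|(\Phi,\Psi,Z)(t)\|^2+\int_0^t\|Z_\xi(\tau)\|^2d\tau$ on the left and, on the right, $\|(\phi_0,\psi_0,\zeta_0)\|_2^2+\|(\phi_t,\psi_t,\zeta_t)(0)\|^2+\bar\delta+\epsilon^{1/8}$ plus a small multiple of $\int_0^t\|(\phi_{\xi\xi},\psi_{\xi\xi})(\tau)\|^2d\tau$, which is the form stated in (\ref{3.50}).

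The main obstacle will be the pairing $\int\partial_t(p\psi_\xi)\,Z/\hat\theta\,d\xi$ coming from the third equation, whose dangerous piece $\int p\,\Psi_\xi Z/\hat\theta\,d\xi$ couples $\Psi_\xi$---for which no direct $L^2_tL^2_\xi$ dissipation is available---to $Z$. Using the vanishing trace $\Psi(t,0)=0$, I would integrate by parts in $\xi$ to shift the derivative onto $Z/\hat\theta$: this produces the benign term $-\int p\,\Psi\, Z_\xi/\hat\theta\,d\xi$, absorbed by the $Z_\xi$ dissipation on the left via Cauchy--Schwarz with a small constant, together with a commutator $-\int p_\xi\Psi Z/\hat\theta\,d\xi$ whose coefficient carries the factor $|\hat p_\xi|\lesssim\bar\delta e^{-c\xi}+|\tilde p_\xi|$ and is therefore handled by the same mechanism as in the previous paragraph. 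Closing the inequality then requires only that $\bar\delta+\epsilon+N(T)$ be small, and the residual tail $(\bar\delta+\epsilon^{1/8}+N(T))\int_0^t\|(\phi_{\xi\xi},\psi_{\xi\xi})\|^2 d\tau$ is exactly the quantity to be controlled in the subsequent lemmas, justifying its appearance on the right-hand side of (\ref{3.50}).
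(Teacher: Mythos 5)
Your proposal follows essentially the same route as the paper: differentiate (3.2) in $t$, test with the multipliers $\frac{\hat p}{\hat v}\phi_t$, $\psi_t$, $\frac{\zeta_t}{\hat\theta}$, use the vanishing traces $(\phi_t,\psi_t,\zeta_t)(t,0)=(0,0,0)$ to eliminate all boundary terms, and absorb the commutators and the forcings $G_{1t},G_{2t}$ by the same mechanisms as in Lemmas 3.1--3.3, converting residual $t$-derivatives to $\xi$-derivatives via the equations so that only the tail $(\bar\delta+\epsilon^{1/8}+N(T))\int_0^t\|(\phi_{\xi\xi},\psi_{\xi\xi})\|^2$ survives. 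The paper explicitly omits the details of the error estimate for $F_3$, so your integration by parts handling the $p\,\psi_{t\xi}\,\zeta_t/\hat\theta$ coupling is a useful clarification of a point the paper glosses over, not a different method.
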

\begin{proof} Just let $(\ref{3.2})_{1t}\times\frac{\hat{p}}{\hat{v}}\phi_{t}, (\ref{3.2})_{2t}\times\psi_{t}, (\ref{3.2})_{3t}\times\frac{\zeta_{t}}{\hat{\theta}}$, we get that
\begin{equation}\label{3.51}
\begin{aligned}
&\frac{1}{2}(\frac{\hat{p}}{\hat{v}}\phi_t^2+\psi_t^2+C_v\frac{\zeta_t^2}{\hat{\theta}})_t
-\frac{s_-}{2}(\frac{\hat{p}}{\hat{v}}\phi_t^2+\psi_t^2+C_v\frac{\zeta_t^2}{\hat{\theta}})_\xi\\[2mm]
&-(\frac{\hat{p}}{\hat{v}}\phi_t\psi_t-\frac{R}{\hat{v}}\zeta_t\psi_t)_\xi
-\kappa((\frac{\zeta_\xi}{v}-\frac{\hat{\theta}_\xi\phi}{v\hat{v}})_t\frac{\zeta_t}{\hat{\theta}})_\xi+\kappa\frac{\zeta_{t\xi}^2}{v\hat{\theta}}
=F_3
\end{aligned}
\end{equation}
where
\begin{equation}\label{3.52}
\begin{aligned}
|F_3|&\lesssim(|\tilde{v}_\xi|+|\bar{v}_\xi|+N(T))
(\phi_t^2+\phi_\xi^2+\psi_t^2+\psi_\xi^2+\zeta_\xi^2+\zeta_t^2+\phi_{t\xi}^2+\psi_{t\xi}^2
+\zeta_{t\xi}^2)\\[2mm]
&+|G_{1t}\psi_t|+|G_{2t}\zeta_t|+|\tilde{v}_\xi|(\phi^2+\zeta^2)\\
&\lesssim(|\tilde{v}_\xi|+|\bar{v}_\xi|+N(T))
(\phi_\xi^2+\psi_\xi^2+\zeta_\xi^2+\phi_{\xi\xi}^2+\psi_{\xi\xi}^2+\zeta_{\xi\xi}^2+G_1^2+G_2^2+G_{1\xi}^2
+\zeta_{t\xi}^2)\\[2mm]
&+G_{1t}^2+G_{2t}^2+(|\tilde{v}_\xi|+|\bar{v}_\xi|)(\phi^2+\zeta^2).
\end{aligned}
\end{equation}

Integrating (\ref{3.51}) over $[0,t]\times \mathbb{R}_+$, and noticing that $(\phi_t,\psi_t,\zeta_t)(t,0)=(0,0,0)$.
By using the estimates (\ref{3.39}) and previous results, we get (\ref{3.50}), we omit the details.
\end{proof}

%%%%%%%%%%%%%%%%%%%%%%%%%%%%%%%%%%%%%%%%%%%%%%%%Lemma 3.5
\begin{Lemma} Under the same assumptions listed in Proposition 3.2, if $\bar{\delta},\epsilon, N(T)$ are suitably small,
 it holds that
\begin{equation}\label{3.54}
\begin{aligned}
&\|(\phi_{t\xi},\psi_{t\xi},\zeta_{t\xi})(t)\|^2
+\IT(\phi_{t\xi}^2+\psi_{t\xi}^2+\zeta_{t\xi}^2)(0,\tau)d\tau
+\IT\|\zeta_{t\xi\xi}(\tau)\|^2d\tau\\
\lesssim &\|(\phi_0,\psi_0,\zeta_0)\|_2^2+\|(\phi_t,\psi_t,\xi_t)(0)\|^2_1
+\bar{\delta}+\epsilon^{\frac{1}{8}}\\[2mm]
&+(\bar{\delta}+\epsilon^{\frac{1}{8}}+N(T))\IT\|(\phi_{\xi\xi},\psi_{\xi\xi}
,\zeta_{\xi\xi\xi})(\tau)\|^2.
\end{aligned}
\end{equation}
\end{Lemma}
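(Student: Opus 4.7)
Estimate (3.54) is the $t$-differentiated analogue of the first-order $\xi$-estimate (3.21) proved in Lemma 3.2, so my plan is to replay the argument of Lemma 3.2 after first differentiating the reformed system (3.2) with respect to $t$. Concretely, applying $\partial_t$ to (3.2) produces
\begin{equation*}
\begin{aligned}
&\phi_{tt}-s_-\phi_{t\xi}-\psi_{t\xi}=0,\\
&\psi_{tt}-s_-\psi_{t\xi}+\Bigl(\tfrac{R\zeta}{v}\Bigr)_{t\xi}-\Bigl(\tfrac{\hat p\phi}{v}\Bigr)_{t\xi}=-G_{1t},\\
&C_v\zeta_{tt}-s_-C_v\zeta_{t\xi}+(p\psi_\xi)_t+\bigl(\hat u_\xi(p-\hat p)\bigr)_t=\kappa\Bigl(\tfrac{\zeta_\xi}{v}-\tfrac{\hat\theta_\xi\phi}{v\hat v}\Bigr)_{t\xi}-G_{2t},
\end{aligned}
\end{equation*}
with the boundary conditions $(\phi_t,\psi_t,\zeta_t)(t,0)=(0,0,0)$ inherited by differentiating $(\phi,\psi,\zeta)(t,0)=(0,0,0)$. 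I will then multiply the first equation by $-\tfrac{\hat p}{\hat v}\phi_{t\xi\xi}$, the second by $-\psi_{t\xi\xi}$, and the third by $-\tfrac{\zeta_{t\xi\xi}}{\hat\theta}$, and integrate by parts in $\xi$. Exactly as in (3.22)--(3.23), this produces the principal quantity $\tfrac{1}{2}\partial_t\bigl(\tfrac{\hat p}{\hat v}\phi_{t\xi}^2+\psi_{t\xi}^2+C_v\tfrac{\zeta_{t\xi}^2}{\hat\theta}\bigr)$, the dissipation $\kappa\tfrac{\zeta_{t\xi\xi}^2}{v\hat\theta}$, a boundary contribution at $\xi=0$, and a remainder $F_1^t$ whose structure mirrors (3.24).

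\textbf{Boundary terms.} After integration the boundary quadratic form at $\xi=0$ reads
\begin{equation*}
\tfrac{|s_-|}{2}\Bigl(\tfrac{p_-}{v_-}\phi_{t\xi}^2+\psi_{t\xi}^2+C_v\tfrac{\zeta_{t\xi}^2}{\theta_-}\Bigr)-\tfrac{p_-}{v_-}\phi_{t\xi}\psi_{t\xi}+\tfrac{R}{v_-}\zeta_{t\xi}\psi_{t\xi},
\end{equation*}
which is exactly the form analyzed in (3.28)--(3.30). Because $z_-\in\Omega_+$, the same discriminant computation gives positive-definiteness of the first two terms on the diagonal $(\phi_{t\xi},\psi_{t\xi})$; then the cross term $\tfrac{R}{v_-}\zeta_{t\xi}\psi_{t\xi}$ is absorbed by a Sobolev/Cauchy inequality that yields a small fraction of $\int_0^t\psi_{t\xi}^2(\tau,0)\,d\tau$ plus a small fraction of $\int_0^t\|\zeta_{t\xi\xi}\|^2\,d\tau$ and an $\int_0^t\|\zeta_{t\xi}\|^2\,d\tau$ piece already controlled by Lemma~3.4. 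This yields the boundary integral $\int_0^t(\phi_{t\xi}^2+\psi_{t\xi}^2+\zeta_{t\xi}^2)(\tau,0)\,d\tau$ appearing on the left-hand side of (3.54).

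\textbf{Interior remainder.} The interior remainder $F_1^t$ contains (i) terms of the form $(\bar v_\xi+\tilde v_\xi+N(T))\cdot(\phi_{t\xi}^2+\psi_{t\xi}^2+\zeta_{t\xi}^2+\phi_{t\xi\xi}^2+\psi_{t\xi\xi}^2+\zeta_{t\xi\xi}^2)$ analogous to the first group in (3.24); (ii) cubic interaction terms from $(p\psi_\xi)_t$, $(\hat u_\xi(p-\hat p))_t$, and $(\hat\theta_\xi\phi/(v\hat v))_{t\xi}$ that mix lower-order spatial derivatives with $\phi_t,\psi_t,\zeta_t$; and (iii) forcing contributions $G_{1t}\psi_{t\xi\xi}$, $G_{2t}\zeta_{t\xi\xi}/\hat\theta$. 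Using (2.24-1) one has $\bar z_t\equiv -s_-\bar z_\xi$, so $G_{1t}$ and $G_{2t}$ still enjoy the exponential decay $\bar\delta e^{-c(\xi+t)}$ from the boundary layer part plus rarefaction corrections of the form $\tilde\theta_{t\xi\xi}+\tilde\theta_\xi\tilde\theta_{t\xi}$, whose time-integrated $L^2$ norms are bounded by $\epsilon^{1/8}$ times an integrable decay in $\tau$ exactly as in (3.25)--(3.26). Group (i) is absorbed by the smallness factor $\bar\delta+\epsilon^{1/8}+N(T)$ producing the third term on the right of (3.54); groups (ii)--(iii) contribute the constant $\bar\delta+\epsilon^{1/8}$ together with absorbable fractions of $\int_0^t\|(\phi_{\xi\xi},\psi_{\xi\xi},\zeta_{\xi\xi\xi})(\tau)\|^2\,d\tau$.

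\textbf{Main obstacle.} The point I expect to be most delicate is handling the highest-derivative terms $\phi_{t\xi\xi}$ and $\psi_{t\xi\xi}$ that appear from $(R\zeta/v)_{t\xi}$ and $(\hat p\phi/v)_{t\xi}$ after pairing with $\psi_{t\xi\xi}$: there is no parabolic dissipation for $\phi_t,\psi_t$, so these terms must either be moved to boundary form via integration by parts (using $(\phi_t,\psi_t)(t,0)=0$ to kill the boundary remnant) or bounded by $\zeta_{t\xi\xi}^2/(v\hat\theta)$ with a small prefactor plus $\|(\phi_{\xi\xi},\psi_{\xi\xi})\|^2$ multiplied by $\bar\delta+\epsilon^{1/8}+N(T)$. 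The term $\zeta_{\xi\xi\xi}$ on the right of (3.54) arises precisely from such a commutator, namely from reorganising $(\zeta_\xi/v)_{t\xi}=\zeta_{t\xi\xi}/v+\cdots$ and shifting a spatial derivative off $v_t=s_-v_\xi+u_\xi$; tracking it carefully (rather than losing it) is what ultimately allows the smallness factor to swallow the bad top-order contribution. Once these are controlled, integration in $t$ combined with Lemmas 3.1--3.4 yields (3.54).
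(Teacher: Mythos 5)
Your proposal coincides with the paper's own proof of Lemma 3.5: the paper likewise differentiates (\ref{3.2}) in $t$, multiplies $(\ref{3.2})_{1t}$, $(\ref{3.2})_{2t}$, $(\ref{3.2})_{3t}$ by $-\frac{\hat p}{\hat v}\phi_{t\xi\xi}$, $-\psi_{t\xi\xi}$, $-\frac{\zeta_{t\xi\xi}}{\hat\theta}$ respectively, integrates, and then treats the boundary quadratic form in $(\phi_{t\xi},\psi_{t\xi},\zeta_{t\xi})$ by the same discriminant argument as (\ref{3.28})--(\ref{3.30}). The one place where the paper is slightly more explicit is in disposing of the top-order terms paired with $\psi_{t\xi\xi}$: it substitutes $\psi_{t\xi\xi}=\phi_{tt\xi}-s_-\phi_{t\xi\xi}$ from the $t,\xi$-differentiated continuity equation before integrating by parts, which is exactly the maneuver you outline informally under ``main obstacle.''
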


\begin{proof} Let $(\ref{3.2})_{1t}\times-\frac{\hat{p}}{\hat{v}}\phi_{t\xi\xi}, (\ref{3.2})_{2t}\times-\psi_{t\xi\xi}, (\ref{3.2})_{3t}\times-\frac{\zeta_{t\xi\xi}}{\hat{\theta}}$, we get that
\begin{equation}\label{3.55}
\begin{aligned}
&\frac{1}{2}(\frac{\hat{p}}{\hat{v}}\phi_{t\xi}^2+\psi_{t\xi}^2+C_v\frac{\zeta_{t\xi}^2}{\hat{\theta}})_t
+\kappa\frac{\zeta_{t\xi\xi}^2}{v\hat{\theta}}\\[2mm]
&+\{\frac{s_-\hat{p}}{2\hat{v}}\phi_{t\xi}^2+\frac{s_-}{2}\psi_{t\xi}^2
+\frac{s_-}{2}C_v\frac{\zeta_{t\xi}^2}{\hat{\theta}}+\frac{\hat{p}}{\hat{v}}\phi_{t\xi}\psi_{t\xi}
-\frac{R}{v}\zeta_{t\xi}\psi_{t\xi}\}_\xi\\
&-(\frac{\hat{p}}{\hat{v}}\phi_{tt}\phi_{t\xi}+\psi_{tt}\psi_{t\xi}+C_v\zeta_{tt}
\frac{\zeta_{t\xi}}{\hat{\theta}})_\xi=F_4,
\end{aligned}
\end{equation}
where
\begin{equation}\label{3.56}
\begin{aligned}
&F_4=-(\frac{\hat{p}}{\hat{v}})_\xi\phi_{tt}\phi_{t\xi}+\frac{1}{2}((\frac{\hat{p}}{\hat{v}})_t+s_-
(\frac{\hat{p}}{\hat{v}})_\xi)\phi_{t\xi}^2+\frac{1}{2}C_v((\frac{1}{\hat{\theta}})_t
+s_-(\frac{1}{\hat{\theta}})_\xi)\zeta_{t\xi}^2\\
&-C_v(\frac{1}{\hat{\theta}})_\xi\zeta_{tt}\zeta_{t\xi}+(\frac{\hat{p}}{\hat{v}})_\xi\phi_{t\xi}\psi_{t\xi}
-(\frac{R}{v})_\xi\zeta_{t\xi}\psi_{t\xi}+\frac{\hat{p}\phi}{v\hat{v}}\phi_{t\xi}\psi_{t\xi\xi}
+\frac{p\zeta}{\theta\hat{\theta}}\psi_{t\xi}\zeta_{t\xi\xi}\\
&+((\frac{R}{v})_t\zeta_\xi+(\frac{R}{v})_\xi\zeta_t+(\frac{R}{v})_{t\xi}\zeta
-(\frac{\hat{p}}{v})_t\phi_\xi-(\frac{\hat{p}}{v})_\xi\phi_t
-(\frac{\hat{p}}{v})_{t\xi}\phi+G_{1t})\psi_{t\xi\xi}\\
&+(p_t\psi_\xi+(\hat{u}_\xi(p-\hat{p}))_t+k(\frac{\hat{\theta}_\xi\phi}{v\hat{v}})_{t\xi}-
\kappa[(\frac{1}{v})_{t\xi}\zeta_\xi+(\frac{1}{v})_t\zeta_{\xi\xi}+(\frac{1}{v})_\xi\zeta_{t\xi}]
+G_{2t})\frac{\zeta_{t\xi\xi}}{\hat{\theta}}
\end{aligned}
\end{equation}
 Using the relationship
$\psi_{t\xi\xi}=\phi_{tt\xi}-s_-\phi_{t\xi\xi}$ and  previous estimates,  we have

\begin{equation}\label{3.57}
\begin{aligned}
&|\IT\int_{R_+}F_4dxd\tau|\\
\lesssim &\|(\phi_0,\psi_0,\zeta_0)\|_2^2+\|(\phi_t,\psi_t,\zeta_t(0)\|_1^2+\bar{\delta}+\eps^{\frac{1}{8}}
+N(T)\|\phi_{t\xi}\|^2+\bar{\delta}\IT\psi_{t\xi}^2(\tau,0)d\tau
\\[2mm]
&+(\bar{\delta}+\eps^{\frac{1}{8}}+N(T))\IT\|(\phi_{\xi\xi},\psi_{\xi\xi},\zeta_{\xi\xi\xi},\zeta_{t\xi\xi})(\tau)\|^2d\tau.
\end{aligned}
\end{equation}
Integrating (\ref{3.55}) over $[0,t]\times \mathbb{R}_+$ and note that $\partial^i_t(\phi,\psi,\zeta)(\tau,0)=(0,0,0)(i=1,2)$.
By using (\ref{3.39}), we get
\begin{equation}\label{3.58}
\begin{aligned}
&\|(\phi_{t\xi},\psi_{t\xi},\zeta_{t\xi})(t)\|^2+\IT\|\frac{\zeta_{t\xi\xi}}{v\hat{\theta}}(\tau)\|^2d\tau\\
&+\IT\{\frac{|s_-|}{2}(\frac{p_-}{v_-}\phi_{t\xi}^2+\psi_{t\xi}^2
+C_v\frac{\zeta_{t\xi}^2}{\theta_-})
-\frac{p_-}{v_-}\phi_{t\xi}\psi_{t\xi}-\frac{R}{v_-}\zeta_{t\xi}\psi_{t\xi})\}(\tau,0)d\tau\\
\lesssim &\|(\phi_0,\psi_0,\zeta_0)\|_2^2+\|(\phi_t,\psi_t,{\zeta}_t)(0)\|^2_1
+\bar{\delta}\IT\psi_{t\xi}^2(\tau,0)d\tau+\bar{\delta}+\eps^{\frac{1}{8}}\\
&+(\bar{\delta}+\eps^{\frac{1}{8}}+N(T))
\IT\|(\phi_{\xi\xi},\psi_{\xi\xi},\zeta_{\xi\xi\xi})(\tau)\|^2d\tau.
\end{aligned}
\end{equation}

Similar as (\ref{3.29}) and (\ref{3.30}), we have
\begin{equation}\label{3.59}
\begin{aligned}
&\IT[\frac{|s_-|}{2}(\frac{p_-}{v_-}\phi_{t\xi}^2+\psi_{t\xi}^2
+C_v\frac{\zeta_{t\xi}^2}{\theta_-})-\frac{p_-}{v_-}\phi_{t\xi}\psi_{t\xi}](\tau,0)d\tau\\[2mm]
\geq &c_0 \IT(\phi_{t\xi}^2+\psi_{t\xi}^2+\zeta_{t\xi}^2)(\tau,0)d\tau,
\end{aligned}
\end{equation}
and
\begin{equation}\label{3.60}
\begin{aligned}
&\IT\frac{R}{v_-}(\zeta_{t\xi}\psi_{t\xi})(\tau,0)d\tau \\
\lesssim & \frac{c_0}{4} \IT
\psi_{t\xi}^2(\tau,0)d\tau+\IT\|\zeta_{t\xi}(\tau)\|^2d\tau
+\frac{1}{4}\IT\|\frac{\zeta_{t\xi\xi}}{\sqrt{v\hat{\theta}}}(\tau)\|^2d\tau,
\end{aligned}
\end{equation}
inserting (\ref{3.59}) and (\ref{3.60}) into (\ref{3.58}) and using (\ref{3.50}),
 we finally get
(\ref{3.54}) and complete the proof of Lemma 3.5.
\end{proof}

By these results prepared, we can deal with the higher order estimates.

%%%%%%%%%%%%%%%%%%%%%%%%%%%%%%%%%%%%%%%%%%%%%%%%%%%%%%%%Lemma 3.6
\begin{Lemma}Under the same assumptions listed in Proposition 3.2, if $\bar{\delta},  \epsilon, N(T)$ are suitably small,  then it holds that
\begin{equation}\label{3.62}
\begin{aligned}
&\|(\phi_{\xi\xi},\psi_{\xi\xi},\zeta_{\xi\xi})(t)\|^2
+\IT|(\phi_{\xi\xi},\psi_{\xi\xi},\zeta_{\xi\xi})|^2(\tau,0)d\tau
+\IT\|\frac{\zeta_{\xi\xi\xi}}{v\hat{\theta}}(\tau)\|^2d\tau\\
\lesssim &\|(\phi_0,\psi_0,\zeta_0)\|_2^2+\|(\phi_t,\psi_t,\xi_t)(0)\|^2_1+\bar{\delta}+\epsilon^{\frac{1}{8}}\\[2mm]
&+(\bar{\delta}+\epsilon^{\frac{1}{8}}+N(T))\IT\|(\phi_\xi,\psi_\xi)(\tau)\|_1^2d\tau.
\end{aligned}
\end{equation}
\end{Lemma}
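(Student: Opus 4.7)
The plan is to promote Lemma 3.2 one order higher in $\xi$. Differentiate the reformed system (3.2) once in $\xi$, obtaining an analogous hyperbolic--parabolic system for $(\phi_\xi,\psi_\xi,\zeta_\xi)$ whose right-hand side collects derivatives of the background profile $(\hat v,\hat u,\hat\theta)$ together with $G_{1\xi}$ and $G_{2\xi}$. Test the three resulting equations by $-\frac{\hat p}{\hat v}\phi_{\xi\xi\xi}$, $-\psi_{\xi\xi\xi}$, and $-\frac{\zeta_{\xi\xi\xi}}{\hat\theta}$ respectively and integrate by parts over $[0,t]\times\mathbb{R}_+$. The left-hand side then assembles the norm $\|(\phi_{\xi\xi},\psi_{\xi\xi},\zeta_{\xi\xi})(t)\|^2$, the parabolic dissipation $\int_0^t\|\zeta_{\xi\xi\xi}/\sqrt{v\hat\theta}\|^2 d\tau$, and a boundary quadratic form in $(\phi_{\xi\xi},\psi_{\xi\xi},\zeta_{\xi\xi})(\tau,0)$ structurally identical to the one appearing in (3.27).

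That boundary form is handled exactly as in Lemma 3.2: the discriminant computation (3.28), which uses $R\theta_-<u_-^2<\gamma R\theta_-$ from $z_-\in\Omega_+$, makes the principal piece
\begin{equation*}
\frac{|s_-|}{2}\Bigl(\frac{p_-}{v_-}\phi_{\xi\xi}^2+\psi_{\xi\xi}^2+C_v\frac{\zeta_{\xi\xi}^2}{\theta_-}\Bigr)-\frac{p_-}{v_-}\phi_{\xi\xi}\psi_{\xi\xi}
\end{equation*}
pointwise coercive, bounding $c_0(\phi_{\xi\xi}^2+\psi_{\xi\xi}^2+\zeta_{\xi\xi}^2)(\tau,0)$ from below, while the cross term $\frac{R}{v_-}\zeta_{\xi\xi}\psi_{\xi\xi}(\tau,0)$ is split off by Young into $\frac{c_0}{4}\psi_{\xi\xi}^2(\tau,0)+\|\zeta_{\xi\xi}\|_{L^\infty}^2$, and the Sobolev-interpolated trace $\|\zeta_{\xi\xi}\|_{L^\infty}^2\lesssim \|\zeta_{\xi\xi}\|\|\zeta_{\xi\xi\xi}\|+\|\zeta_{\xi\xi}\|^2$ is absorbed into the already-established parabolic dissipation and small multiples of the left-hand side.

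The analytic heart of the lemma --- and the obstacle explicitly flagged by the author --- lies in the commutator remainder $F_2$ on the right. It contains four kinds of terms: (i) profile-derivative products $(\bar z_\xi,\tilde z_\xi)\cdot(\phi,\psi,\zeta,\text{derivatives})$, handled by (2.24-1), (2.34) together with the exponential and $L^1$ decays used in (3.15)--(3.18); (ii) $N(T)$-small factors times $\|(\phi_{\xi\xi},\psi_{\xi\xi},\zeta_{\xi\xi\xi})\|$, absorbed to the left; (iii) the differentiated sources $G_{1\xi}$, $G_{2\xi}$, whose rarefaction-induced pieces $|\tilde\theta_{\xi\xi\xi}|+|\tilde\theta_\xi\tilde\theta_{\xi\xi}|$ are estimated via (2.34) in the same style as (3.26); and (iv) the higher-order boundary leftovers $J(\tau,0)$ generated by integration by parts, where the test function $\phi_{\xi\xi\xi}, \psi_{\xi\xi\xi}$ at $\xi=0$ would naively require traces we do not directly control. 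The key device for (iv) is the \emph{interior-relation trick}: since $(\phi,\psi,\zeta)(\tau,0)=0$ forces $(\phi_t,\psi_t,\zeta_t,\phi_{tt},\psi_{tt},\zeta_{tt})(\tau,0)=0$, evaluating (3.2) and its $t$-derivative on the boundary $\xi=0$ yields algebraic identities expressing the unwanted $(\phi_{\xi\xi},\psi_{\xi\xi})(\tau,0)$ traces in terms of $(\phi_{t\xi},\psi_{t\xi},\zeta_{t\xi})(\tau,0)$ traces supplied by Lemma 3.5 and the already-estimated $\zeta_{\xi\xi}(\tau,0)$ coming from $(\ref{3.2})_3$ evaluated at $\xi=0$. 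This is precisely the place where the normal-direction estimates of Lemmas 3.4--3.5 earn their place in the scheme.

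Combining the coercive boundary form, the absorption of the cross term, and the $F_2$ bound produces (3.62), with $\int_0^t\|(\phi_\xi,\psi_\xi)(\tau)\|_1^2 d\tau$ surviving on the right. The subsequent Proposition 3.2 closes the loop by absorbing this factor back into the cumulative dissipation from Lemmas 3.1--3.3, provided $\bar\delta,\epsilon,N(T)$ are chosen small enough. The main technical difficulty throughout is the boundary bookkeeping in step (iv); once the interior relations reduce all unwanted traces to time-derivative traces, the remaining estimates are a systematic application of the decay bounds and Sobolev inequalities developed in the preceding lemmas.
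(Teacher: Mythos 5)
Your plan coincides with the paper's proof in all of its main structural choices: the same multipliers $-\frac{\hat p}{\hat v}\phi_{\xi\xi\xi}$, $-\psi_{\xi\xi\xi}$, $-\frac{\zeta_{\xi\xi\xi}}{\hat\theta}$ applied to the $\xi$-differentiated system, the same coercive boundary quadratic form handled by the discriminant condition $R\theta_-<u_-^2<\gamma R\theta_-$ with the cross term $\frac{R}{v_-}\zeta_{\xi\xi}\psi_{\xi\xi}$ split by Young and a Sobolev trace into the $\zeta_{\xi\xi\xi}$ dissipation, and --- most importantly --- the same ``interior-relation trick'': evaluating \eqref{3.2} and its $t$- and $\xi$-derivatives at $\xi=0$ (the paper's (3.74)--(3.78)) to convert the mixed flux traces $\phi_{t\xi}\phi_{\xi\xi}$, $\psi_{t\xi}\psi_{\xi\xi}$, $\zeta_{\xi\xi}\zeta_{t\xi}$ into a good-signed term $-s_-\phi_{t\xi}^2(\tau,0)$ plus pieces controlled by the normal-direction estimates of Lemmas 3.4--3.5. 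That is exactly where those lemmas enter in the paper as well.

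There is, however, one interior term your plan does not account for and which, as written, would fail. Expanding the quasilinear pressure perturbation produces the contribution $\bigl(\frac{(R\zeta-\hat p\phi)\phi}{v\hat v}\bigr)_{\xi\xi}\psi_{\xi\xi\xi}$, whose leading part is $[\cdots]\,\phi_{\xi\xi}\psi_{\xi\xi\xi}$ with a bounded but nonconstant coefficient. You cannot absorb this into the left-hand side (there is no $L^2_{t,\xi}$ control of $\psi_{\xi\xi\xi}$ anywhere in the scheme --- the solution space only gives $\psi_\xi\in L^2(0,t;H^1)$), and naive integration by parts trades $\psi_{\xi\xi\xi}$ for $\phi_{\xi\xi\xi}\psi_{\xi\xi}$, which is equally uncontrolled. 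The paper's resolution (its (3.66)--(3.68)) is to substitute $\psi_{\xi\xi\xi}=\phi_{t\xi\xi}-s_-\phi_{\xi\xi\xi}$ from the twice-differentiated mass equation, so that $\phi_{\xi\xi}\psi_{\xi\xi\xi}$ becomes $\tfrac12(\phi_{\xi\xi}^2)_t-\tfrac{s_-}{2}(\phi_{\xi\xi}^2)_\xi$ modulo lower-order commutators, i.e.\ a total derivative absorbable into $\|\phi_{\xi\xi}(t)\|^2$ and the boundary form. Your categories (i)--(iv) of the remainder $F_2$ miss this device; everything else in your outline is sound and matches the paper.
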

\begin{proof} Multiplying $(\ref{3.2})_{1\xi}$ by $ -\frac{\hat{p}}{\hat{v}}\phi_{\xi\xi\xi}$, $(\ref{3.2})_{2\xi}$ by $-\psi_{\xi\xi\xi}$, $(\ref{3.2})_{3\xi}$ by $-\frac{\zeta_{\xi\xi\xi}}{\hat{\theta}}$ and adding the results, we can get

\begin{equation}\label{3.63}
\begin{aligned}
&\frac{1}{2}(\frac{\hat{p}}{\hat{v}}\phi_{\xi\xi}^2+\psi_{\xi\xi}^2+C_v\frac{\zeta_{\xi\xi}^2}{\hat{\theta}})_t
+\frac{s_-}{2}(\frac{\hat{p}}{\hat{v}}\phi_{\xi\xi}^2+\psi_{\xi\xi}^2
+C_v\frac{\zeta_{\xi\xi}^2}{\hat{\theta}})_\xi  \\
&+(\frac{\hat{p}}{\hat{v}}\phi_{\xi\xi}\psi_{\xi\xi}-\frac{R}{\hat{v}}\psi_{\xi\xi}\zeta_{\xi\xi})_\xi
-(\frac{\hat{p}}{\hat{v}}\phi_{t\xi}\phi_{\xi\xi}+\psi_{t\xi}\psi_{\xi\xi}
+C_v\zeta_{t\xi}\frac{\zeta_{\xi\xi}}{\hat{\theta}})_\xi
+\kappa\frac{\zeta_{\xi\xi\xi}^2}{v\hat{\theta}} \\
=& F_2+G_{2\xi}\frac{\zeta_{\xi\xi\xi}}{\hat{\theta}}+G_{1\xi}\psi_{\xi\xi\xi}
+(\frac{R}{\hat{v}})_{\xi\xi}\zeta\psi_{\xi\xi\xi}-(\frac{\hat{p}}{\hat{v}})_{\xi\xi}\phi\psi_{\xi\xi\xi} \\
&+2(\frac{R}{\hat{v}})_\xi\zeta_\xi\psi_{\xi\xi\xi}-2(\frac{\hat{p}}{\hat{v}})_\xi\phi_\xi\psi_{\xi\xi\xi}
-(\frac{(R\zeta-\hat{p}\phi)\phi}{v\hat{v}})_{\xi\xi}\psi_{\xi\xi\xi},
\end{aligned}
\end{equation}
where
\begin{equation}\label{3.63-1}
\begin{aligned}
|F_2|&\lesssim (|\hat{v}_\xi|+N(T))|(\phi_\xi,\psi_\xi,\zeta_\xi,\phi_{\xi\xi},\psi_{\xi\xi},\zeta_{\xi\xi},\zeta_{\xi\xi\xi})|^2 \\[2mm]
&+(|\bar{u}_\xi|+|\tilde{u}_\xi|)(\phi^2+\zeta^2)+|\hat{v}_\xi|G_{2\xi}^2.
\end{aligned}
\end{equation}
Firstly, we should deal with the high derivative terms. Integration by parts, we have
\begin{equation}\label{3.64}
\begin{aligned}
&\IT\IXX \{G_{1\xi}\psi_{\xi\xi\xi}
+(\frac{R}{\hat{v}})_{\xi\xi}\zeta\psi_{\xi\xi\xi}-(\frac{\hat{p}}{\hat{v}})_{\xi\xi}\phi\psi_{\xi\xi\xi}
+2(\frac{R}{\hat{v}})_\xi\zeta_\xi\psi_{\xi\xi\xi}-2(\frac{\hat{p}}{\hat{v}})_\xi\phi_\xi\psi_{\xi\xi\xi}\}
d\xi d\tau \\
=&\IT\IXX [G_{1\xi}\psi_{\xi\xi}
+(\frac{R}{\hat{v}})_{\xi\xi}\zeta\psi_{\xi\xi}-(\frac{\hat{p}}{\hat{v}})_{\xi\xi}\phi\psi_{\xi\xi}
+2(\frac{R}{\hat{v}})_\xi\zeta_\xi\psi_{\xi\xi}-2(\frac{\hat{p}}{\hat{v}})_\xi\phi_\xi\psi_{\xi\xi}]_\xi d\xi d\tau \\
&-\IT\IXX[G_{1\xi}+(\frac{R}{\hat{v}})_{\xi\xi}\zeta-(\frac{\hat{p}}{\hat{v}})_{\xi\xi}\phi
+2(\frac{R}{\hat{v}})_\xi\zeta_\xi-2(\frac{\hat{p}}{\hat{v}})_\xi\phi_\xi]_\xi\psi_{\xi\xi} \\
\lesssim & (\bar{\delta}+\epsilon)\IT(e^{-c\tau}+\zeta_\xi(\tau,0)+\phi_\xi(\tau,0))\psi_{\xi\xi}(\tau,0)d\tau
+\IT\IXX|G_{1\xi\xi}\psi_{\xi\xi}|d\xi d\tau \\
&+\IT\IXX|\hat{v}_\xi|(\phi_\xi^2+\zeta_\xi^2+\phi_{\xi\xi}^2
+\psi_{\xi\xi}^2+\zeta_{\xi\xi}^2)+(|\hat{v}_\xi|^2+|\hat{v}_{\xi\xi}|)(\phi^2+\zeta^2)d\xi d\tau \\
\lesssim &(\bar{\delta}+\epsilon)[1+\IT(\zeta_\xi^2+\phi_\xi^2+\psi_{\xi\xi}^2)(\tau,0)d\tau]
+\IT\IXX|G_{1\xi\xi}\psi_{\xi\xi}|d\xi d\tau \\
&+(\bar{\delta}+\eps)\IT\|(\phi_\xi,\psi_\xi,\zeta_\xi)(\tau)\|_1^2d\tau
+\IT\IXX(|\hat{v}_\xi|^2+|\hat{v}_{\xi\xi}|)(\phi^2+\zeta^2)d\xi d\tau.
\end{aligned}
\end{equation}
Integrating (\ref{3.63}) and making use of (\ref{3.63-1}), (\ref{3.64}), we get

\begin{equation}\label{3.65}
\begin{aligned}
&\|(\phi_{\xi\xi},\psi_{\xi\xi},\zeta_{\xi\xi})(t)\|^2+\IT\|\frac{\zeta_{\xi\xi\xi}}{v\hat{\theta}}(\tau)\|^2d\tau
+\IT J(\tau,0)d\tau \\
\lesssim & \|(\phi_0,\psi_0,\zeta_0)\|_2^2+(\bar{\delta}+\epsilon^\frac{1}{8})\{1+\IT(\zeta_\xi^2+\phi_\xi^2+\psi_{\xi\xi}^2)(\tau,0)d\tau\}\\
&+(\bar{\delta}+\eps^{\frac{1}{8}}+N(T))\IT\|(\phi_\xi,\psi_\xi)(\tau)\|_1^2d\tau\\
&+\IT\IXX|\hat{v}_\xi|^2G_{2\xi}^2+|G_{2\xi}\frac{\zeta_{\xi\xi\xi}}{\hat{\theta}}|
+|G_{1\xi\xi}\psi_{\xi\xi}|d\xi d\tau   \\
&+\IT\IXX(|\bar{u}_\xi|+|\tilde{u}_\xi|+|\tilde{v}_\xi\bar{v}_\xi|+|\tilde{v}_\xi|^2+|\tilde{v}_{\xi\xi}|)
(\phi^2+\zeta^2)d\xi d\tau \\
&-\IT\IXX(\frac{(R\zeta-\hat{p}\phi)\phi}{v\hat{v}})_{\xi\xi}\psi_{\xi\xi\xi}d\xi d\tau,
\end{aligned}
\end{equation}
where
\begin{eqnarray*}
\begin{aligned}
J:&=-\frac{s_-p_-}{2 v_-} \phi^2_{\xi\xi}
-\frac{s_-}{2} \psi^2_{\xi\xi}-\frac{p_-}{v_-}\phi_{\xi\xi}\psi_{\xi\xi }+\frac{R}{v_-} \zeta_{\xi\xi}\psi_{\xi\xi}
-\frac{s_- C_v}{2 \theta} \zeta^2_{\xi\xi}\\[2mm]
&+\frac{p_-}{v_-} \phi_{t\xi} \phi_{\xi\xi}+ \psi_{t\xi} \psi_{\xi\xi} +\frac{C_v}{\theta_-} \zeta_{\xi\xi} \zeta_{t\xi}.
\end{aligned}
\end{eqnarray*}

For the last term on the right hand side of (\ref{3.65}), we estimate it in the following
\begin{equation}\label{3.66}
\begin{aligned}
&\IT\IXX(\frac{(R\zeta-\hat{p}\phi)\phi}{v\hat{v}})_{\xi\xi}\psi_{\xi\xi\xi}d\xi d\tau \\
=&\IT\IXX[\frac{(R\zeta-2\hat{p}\phi)}{v\hat{v}}
-\frac{(R\zeta-\hat{p}\phi)\phi}{\hat{v}v^2}]\phi_{\xi\xi}
\psi_{\xi\xi\xi}+2[(\frac{R\zeta-\hat{p}\phi}{v\hat{v}})_\xi\phi_\xi\psi_{\xi\xi}]_\xi \\
&-2[(\frac{R\zeta-\hat{p}\phi}{v\hat{v}})_\xi\phi_\xi]_\xi\psi_{\xi\xi}
+[\frac{(R\zeta-\hat{p}\phi)\phi}{\hat{v}}(2\frac{v_\xi^2}{v^3}-\frac{\hat{v}_{\xi\xi}}{v^2})\psi_{\xi\xi}]_\xi\\
&-[\frac{(R\zeta-\hat{p}\phi)\phi}{\hat{v}}(2\frac{v_\xi^2}{v^3}-\frac{\hat{v}_{\xi\xi}}{v^2})]_\xi\psi_{\xi\xi}
+2[(\frac{1}{v})_\xi(\frac{R\zeta-\hat{p}\phi}{\hat{v}})_\xi\phi\psi_{\xi\xi}]_\xi \\
&-2[(\frac{1}{v})_\xi(\frac{R\zeta-\hat{p}\phi}{\hat{v}})_\xi\phi]_\xi\psi_{\xi\xi}
+(\frac{1}{v}[(\frac{R\zeta}{\hat{v}})_{\xi\xi}+2(\frac{\hat{p}}{\hat{v}})_\xi\phi_\xi
+(\frac{\hat{p}}{\hat{v}})_{\xi\xi}\phi]\phi\psi_{\xi\xi})_\xi \\
&-\{(\frac{1}{v}[(\frac{R\zeta}{\hat{v}})_{\xi\xi}+2(\frac{\hat{p}}{\hat{v}})_\xi\phi_\xi
+(\frac{\hat{p}}{\hat{v}})_{\xi\xi}\phi]\phi\}_\xi\psi_{\xi\xi}d\xi d\tau \\
=&\IT\IXX\{[\frac{(R\zeta-2\hat{p}\phi)}{v\hat{v}}
-\frac{(R\zeta-\hat{p}\phi)\phi}{\hat{v}v^2}]\phi_{\xi\xi}
\psi_{\xi\xi\xi}+H\}d\xi d\tau,
\end{aligned}
\end{equation}
where
\begin{equation}\label{3.67}
\begin{aligned}
\IT\IXX Hd\xi d\tau  &\lesssim (\overline{\delta}+\epsilon^{\frac{1}{8}})\{1
+\IT(\phi_\xi^2+\psi_{\xi\xi}^2)(\tau,0)d\tau \}\\
&+(\epsilon^{\frac{1}{8}}+\bar{\delta}+N(T))\IT(\|(\phi_\xi,\psi_\xi,\zeta_\xi)(\tau)\|_1^2+\|\zeta_{\xi\xi\xi}(\tau)\|^2)d\tau,
\end{aligned}
\end{equation}
and
\begin{equation}\label{3.68}
\begin{aligned}
&\IT\IXX[\frac{(R\zeta-2\hat{p}\phi)}{v\hat{v}}
-\frac{(R\zeta-\hat{p}\phi)\phi}{\hat{v}v^2}]\phi_{\xi\xi}\psi_{\xi\xi\xi}d\xi d\tau \\
=&\IT\IXX[\frac{(R\zeta-2\hat{p}\phi)}{v\hat{v}}
-\frac{(R\zeta-\hat{p}\phi)\phi}{\hat{v}v^2}]\phi_{\xi\xi}(\phi_{t\xi\xi}-s_-\phi_{\xi\xi\xi})d\xi d\tau \\
=&\IT\IXX[\frac{1}{2}(\frac{(R\zeta-2\hat{p}\phi)}{v\hat{v}}
-\frac{(R\zeta-\hat{p}\phi)\phi}{\hat{v}v^2})\phi_{\xi\xi}^2]_t \\
&-[\frac{1}{2}s_-(\frac{(R\zeta-2\hat{p}\phi)}{v\hat{v}}
-\frac{(R\zeta-\hat{p}\phi)\phi}{\hat{v}v^2})\phi_{\xi\xi}^2]_\xi \\
&-[\frac{1}{2}(\frac{(R\zeta-2\hat{p}\phi)}{v\hat{v}}
-\frac{(R\zeta-\hat{p}\phi)\phi}{\hat{v}v^2})]_t\phi_{\xi\xi}^2 \\
&+[\frac{1}{2}s_-(\frac{(R\zeta-2\hat{p}\phi)}{v\hat{v}}
-\frac{(R\zeta-\hat{p}\phi)\phi}{\hat{v}v^2})]_\xi\phi_{\xi\xi}^2 d\xi d\tau \\
\lesssim &\|(\phi_0,\psi_0,\zeta_0)\|_2^2+\bar{\delta}+\epsilon^\frac{1}{8}
+N(T)\|\phi_{\xi\xi}\|^2\\
&+(\bar{\delta}+\epsilon^\frac{1}{8}+N(T))\{\IT\|(\phi_{\xi\xi},\zeta_{\xi\xi\xi})
(\tau)\|^2d\tau\},
\end{aligned}
\end{equation}
Here we use the fact $\phi_{t\xi\xi}-s_-\phi_{\xi\xi\xi}-\psi_{\xi\xi\xi}=0$ in (\ref{3.68}).
Similar as (\ref{3.25}),(\ref{3.26}), we have
\begin{equation}\label{3.69}
\begin{aligned}
&\IT\IXX\hat{v}_\xi G_{2\xi}^2+|G_{2\xi}\frac{\zeta_{\xi\xi\xi}}{\hat{\theta}}|
+|G_{1\xi\xi}\psi_{\xi\xi}|d\xi d\tau  \\
\lesssim &(\overline{\delta}+\epsilon^{\frac{1}{8}})\{1+\IT\|(\psi_{\xi\xi},\zeta_{\xi\xi\xi})(\tau)\|^2d\tau\}.
\end{aligned}
\end{equation}
Making use of (\ref{3.9}), (\ref{3.15}), (\ref{3.17})  again, we have
\begin{equation}\label{3.70}
\begin{aligned}
&\IT\IXX(|\bar{u}_\xi|+|\tilde{u}_\xi|+|\tilde{v}_\xi\bar{v}_\xi|+|\tilde{v}_\xi|^2+|\tilde{v}_{\xi\xi}|)
(\phi^2+\zeta^2)d\xi d\tau \\
\lesssim &\|(\phi_0,\psi_0,\zeta_0)\|_2^2+(\epsilon^{\frac{1}{8}}+\overline{\delta})
\{1+N(T)+\IT\|(\phi_\xi,\psi_\xi,\zeta_\xi)(\tau)\|^2d\tau\}.
\end{aligned}
\end{equation}
Inserting (\ref{3.66})-(\ref{3.70}) into (\ref{3.65}) and
using (\ref{3.39}), it holds that
\begin{equation}\label{3.71}
\begin{aligned}
&\|(\phi_{\xi\xi},\psi_{\xi\xi},\zeta_{\xi\xi})(t)\|^2
+\IT\|\frac{\zeta_{\xi\xi\xi}}{v\hat{\theta}}(\tau)\|^2d\tau
+\IT J(\tau,0)d\tau \\
\lesssim &\|(\phi_0,\psi_0,\zeta_0)\|_2^2+(\bar{\delta}+\eps^{\frac{1}{8}})\{1
+\int_{0}^{t}\psi_{\xi\xi}^2(\tau,0)d\tau\}\\
&+(\bar{\delta}+\epsilon^{\frac{1}{8}}+N(T))
\IT\|(\phi_\xi,\psi_\xi)(\tau)\|_1^2d\tau.
\end{aligned}
\end{equation}
Then, we should estmiate the boundary term $J$.
Differentiating $(\ref{3.2})_1$ and  $(\ref{3.2})_2$ by t and  choosing $\xi=0$, by the boundary condition
\begin{eqnarray}\label{3.73}
\begin{aligned}
\partial^i_t(\phi,\psi,\zeta)(t,0)=(0,0,0),(i=0,1,2),
\end{aligned}
\end{eqnarray}
we get
\begin{eqnarray}\label{3.74}
\left\{
\begin{array}{lll}
-s_-\phi_{t\xi}(t,0)-\psi_{t\xi}(t,0)=0,  \\[2mm]
-s_-\psi_{t\xi}(t,0)-\frac{p_-}{v_-}\phi_{t\xi}(t,0)=-\frac{R}{v_-}\zeta_{t\xi}(t,0),
\end{array}
\right.
\end{eqnarray}
which means that
\begin{eqnarray}\label{3.75}
\begin{aligned}
&\psi_{t\xi}(t,0)=-s_-\phi_{t\xi}(t,0), \\[2mm]
&\frac{R}{v_-}\zeta_{t\xi}(t,0)=(-s_-^2+\frac{p_-}{v_-})\phi_{t\xi}(t,0).
\end{aligned}
\end{eqnarray}
On the other hand, differentiating $(\ref{3.2})_1$ and  $(\ref{3.2})_2$ by $\xi$ and  choosing $\xi=0$, it holds
\begin{eqnarray}\label{3.76}
\left\{
\begin{array}{l}
\phi_{t\xi}(t,0)=s_-\phi_{\xi\xi}(t,0)+\psi_{\xi\xi}(t,0),  \\[2mm]
\psi_{t\xi}(t,0)=(s_-\psi_{\xi\xi}+\frac{p_-}{v_-}\phi_{\xi\xi}-\frac{R}{v_-}\zeta_{\xi\xi})(t,0)+O(1)(\bar{\delta}+N(t)+\eps^{\frac{1}{8}})|(\phi_\xi,\zeta_\xi)(t,0)|\\[2mm]
+O(1)\bar{\delta}e^{-ct}.
\end{array}
\right.
\end{eqnarray}
Reminding that $s_-<0$,  we divided the boundary term $J(t,0)$ into three parts:
\begin{eqnarray*}
\begin{aligned}
J_1&:=-\frac{s_-p_-}{2 v_-} \phi^2_{\xi\xi}(t,0)-\frac{p_-}{v_-} \phi_{\xi\xi}\psi_{\xi\xi}(t,0)
-\frac{s_-}{2} \psi^2_{\xi\xi}(t,0)-\frac{s_- C_v}{2\theta_-}\zeta^2_{\xi\xi}(t,0),\\[2mm]
J_2&:=\frac{R}{v_-} \zeta_{\xi\xi}\psi_{\xi\xi}(t,0)
+\frac{C_v}{\theta_-} \zeta_{\xi\xi} \zeta_{t\xi}(t,0),\\[2mm]
J_3&:=\frac{p_-}{v_-} \phi_{t\xi} \phi_{\xi\xi}(t,0)+ \psi_{t\xi} \psi_{\xi\xi}(t,0).
\end{aligned}
\end{eqnarray*}
Similar as (\ref{3.29}), we can get

\begin{eqnarray}\label{3.76-1}
\begin{aligned}
\int^t_0 J_1(\tau,0)d\tau\geq c_0\int^t_0 (\phi^2_{\xi\xi}+\psi^2_{\xi\xi}+\zeta^2_{\xi\xi})(\tau,0)d\tau.
\end{aligned}
\end{eqnarray}
As for $J_2$, by (\ref{3.75}) and (\ref{3.76}), it holds that
\begin{eqnarray}\label{3.77}
\begin{aligned}
&\int^t_0 J_2(\tau,0)d\tau
\lesssim \int^t_0|\zeta_{\xi\xi}||(\phi_{\xi\xi},\psi_{\xi\xi})|(\tau,0)d\tau \\[2mm]
&\lesssim \frac{c_0}{4}\int^t_0(\phi^2_{\xi\xi}+\psi^2_{\xi\xi})(\tau,0)d\tau
+\frac{1}{4}\int^t_0\|\frac{\zeta_{\xi\xi\xi}}{v\hat{\theta}}(\tau)\|^2d\tau+\int^t_0\|\zeta_{\xi\xi}(\tau)\|^2d\tau,
\end{aligned}
\end{eqnarray}
As for $J_3$,
since $\psi_{t\xi}(t,0)=-s_-\phi_{t\xi}(t,0)$, it holds that
\begin{eqnarray}\label{3.78}
\begin{aligned}
J_3&=\phi_{t\xi}(\frac{p_-}{v_-} \phi_{\xi\xi}-s_-\psi_{\xi\xi})(t,0)\\[2mm]
&=\phi_{t\xi}(-s_-\psi_{\xi\xi}-s^2_-\phi_{\xi\xi})(t,0)+(\frac{p_-}{v_-}+s^2_-) \phi_{t\xi}\phi_{\xi\xi}(t,0)\\[2mm]
&=-s_-\phi^2_{t\xi}(t,0)+(\frac{p_-}{v_-}+s^2_-)\frac{R}{p_--s^2_-v_-} \phi_{\xi\xi}\zeta_{t\xi}(t,0).
\end{aligned}
\end{eqnarray}
Inserting (\ref{3.76-1})-(\ref{3.78}) into (\ref{3.71}) and using (\ref{3.39}), then we get
\begin{equation}\label{3.79}
\begin{aligned}
&\|(\phi_{\xi\xi},\psi_{\xi\xi},\zeta_{\xi\xi})(t)\|^2+\IT\|\frac{\zeta_{\xi\xi\xi}}{v\hat{\theta}}(\tau)\|^2d\tau
+ \frac{c_0}{4}\int^t_0(\phi^2_{\xi\xi}+\psi^2_{\xi\xi}+\zeta^2_{\xi\xi})(\tau,0)d\tau\\[2mm]
\lesssim &\|(\phi_0,\psi_0,\zeta_0)\|_2^2+\bar{\delta}+\eps^{\frac{1}{8}}
+(\bar{\delta}+\epsilon^{\frac{1}{8}}+N(T))
\IT\|(\phi_\xi,\psi_\xi)(\tau)\|_1^2d\tau
+\int^t_0(\phi_{\xi\xi}\zeta_{t\xi})(\tau,0)d\tau\\[2mm]
\lesssim &\|(\phi_0,\psi_0,\zeta_0)\|_2^2+\bar{\delta}+\eps^{\frac{1}{8}}
+(\bar{\delta}+\epsilon^{\frac{1}{8}}+N(T))
\IT\|(\phi_\xi,\psi_\xi)(\tau)\|_1^2d\tau\\[2mm]
&+\frac{c_0}{8}\int^t_0\phi^2_{\xi\xi}(\tau,0)d\tau+\int^t_0\|(\zeta_{t\xi},\zeta_{t\xi\xi})(\tau)\|^2d\tau,
\end{aligned}
\end{equation}
Using the results of Lemma 3.4-Lemma 3.5, under our smallness assumptions,  we can get (\ref{3.62}) and complete the proof of Lemma 3.6.
\end{proof}

Combining the results of Lemma 3.1-Lemma 3.6, we get
\begin{equation}\label{3.80}
\begin{aligned}
&\|(\phi,\psi,\zeta)(t)\|^2_2+\|(\phi_t,\psi_t,\zeta_t)(t)\|^2_1
+\int^t_0\|(\phi_\xi,\psi_\xi)(\tau)\|_1^2+\|\zeta_\xi(\tau)\|_2^2d\tau \\[2mm]
&+\int^t_0\|(\psi_{t\xi},\phi_{t\xi},\zeta_{t\xi},\zeta_{t\xi\xi})(\tau)\|^2d\tau
+\int^t_0(\psi_\xi^2+\phi_\xi^2+\zeta_\xi^2)(\tau,0)d\tau\\[2mm]
&+\int^t_0(\psi_{\xi\xi}^2+\phi_{\xi\xi}^2+\zeta_{\xi\xi}^2+\psi_{t\xi}^2+\phi_{t\xi}^2+\zeta_{t\xi}^2)(\tau,0)d\tau \\[2mm]
\lesssim & \|(\phi_0,\psi_0,\zeta_0)\|_2^2+\|(\phi_t,\psi_t,\xi_t)(0)\|^2_1+\bar{\delta}+\epsilon^{\frac{1}{8}}\\[2mm]
&+(\bar{\delta}+\epsilon^{\frac{1}{8}}+N(T))\IT\|(\phi_{\xi\xi},\psi_{\xi\xi})(\tau)\|^2d\tau.
\end{aligned}
\end{equation}

At last, we turn to  estimate $\IT\|(\phi_{\xi\xi},\psi_{\xi\xi})(\tau)\|^2d\tau$.

%%%%%%%%%%%%%%%%%%%%%%%%%%%%%%%%%%%%%%%%%%Lemma 3.7
\begin{Lemma}Under the same
assumptions listed in Proposition 3.2, if $\delta, \eps, N(T)$ are suitably small,  it holds that
\begin{eqnarray}\label{3.83}
\begin{aligned}
\int^t_0\|(\phi_{\xi\xi},\psi_{\xi\xi})(\tau)\|^2d\tau
\lesssim \|(\phi_0,\psi_0,\zeta_0)\|_2^2+\|(\phi_t,\psi_t,\xi_t)(0)\|^2_1+\bar{\delta}+\epsilon^{\frac{1}{8}}.
\end{aligned}
\end{eqnarray}
\end{Lemma}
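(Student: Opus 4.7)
The plan is to estimate $\phi_{\xi\xi}$ and $\psi_{\xi\xi}$ \emph{algebraically} from the second and third equations of (\ref{3.2}), exploiting the fact that although $\phi,\psi$ are hyperbolic and therefore have no intrinsic dissipation, the momentum and energy equations allow one to express their second space derivatives in terms of quantities for which Lemmas~3.3--3.6 already provide $L^2_{t,\xi}$ control. Residual $(\phi_{\xi\xi},\psi_{\xi\xi})$ contributions on the right-hand side will then be absorbed using the smallness of $\bar{\delta}+\epsilon^{1/8}+N(T)$.

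Concretely, first I would rewrite $(\ref{3.2})_3$ in the form
\begin{equation*}
p\psi_\xi \;=\; \kappa\Big(\frac{\zeta_\xi}{v}\Big)_\xi - C_v\zeta_t + s_-C_v\zeta_\xi - \kappa\Big(\frac{\hat\theta_\xi\phi}{v\hat v}\Big)_\xi - \hat u_\xi(p-\hat p)-G_2,
\end{equation*}
which isolates $\psi_\xi$ in terms of $\zeta$-quantities and known forcing. Since $p$ is bounded away from zero by (\ref{3.8}), differentiating this identity in $\xi$ and solving for $\psi_{\xi\xi}$ yields the pointwise bound
\begin{equation*}
|\psi_{\xi\xi}|^2 \;\lesssim\; |\zeta_{\xi\xi\xi}|^2+|\zeta_{t\xi}|^2+|\zeta_{\xi\xi}|^2+|G_{2\xi}|^2+\text{(l.o.t.)},
\end{equation*}
where the lower-order terms are products of $|\psi_\xi|,|\zeta_\xi|,|\phi|,|\phi_\xi|$ with first or second derivatives of the backgrounds $\bar{z},\tilde{z}$. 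A parallel argument applied to $(\ref{3.2})_2$, isolating $\tfrac{\hat p}{v}\phi_\xi$ and then differentiating in $\xi$, gives
\begin{equation*}
|\phi_{\xi\xi}|^2 \;\lesssim\; |\psi_{t\xi}|^2+|\psi_{\xi\xi}|^2+|\zeta_{\xi\xi}|^2+|G_{1\xi}|^2+\text{(l.o.t.)}.
\end{equation*}

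Next I would integrate both pointwise inequalities over $[0,t]\times\mathbb{R}_+$. The lower-order background-dependent products are handled by the decay arguments of (\ref{3.15})--(\ref{3.18}) and (\ref{3.25})--(\ref{3.26}), contributing $O(\bar{\delta}+\epsilon^{1/8})$. The principal dissipative terms $\int_0^t \|\zeta_{\xi\xi\xi}\|^2 d\tau$, $\int_0^t \|\zeta_{t\xi}\|^2 d\tau$, $\int_0^t \|\psi_{t\xi}\|^2 d\tau$, and $\int_0^t \|\zeta_{\xi\xi}\|^2 d\tau$ are all controlled by the combined estimate (\ref{3.80}), which bounds them by the initial data plus $\bar{\delta}+\epsilon^{1/8}$ plus a small multiple of $\int_0^t \|(\phi_{\xi\xi},\psi_{\xi\xi})\|^2 d\tau$. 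Collecting everything, one obtains an inequality of the shape
\begin{equation*}
\int_0^t \|(\phi_{\xi\xi},\psi_{\xi\xi})(\tau)\|^2 d\tau \;\lesssim\; \|(\phi_0,\psi_0,\zeta_0)\|_2^2+\|(\phi_t,\psi_t,\zeta_t)(0)\|_1^2 + \bar{\delta}+\epsilon^{1/8} + C(\bar{\delta}+\epsilon^{1/8}+N(T))\int_0^t \|(\phi_{\xi\xi},\psi_{\xi\xi})(\tau)\|^2 d\tau,
\end{equation*}
and the last term is absorbed into the left-hand side under the smallness assumption $\bar{\delta}+\epsilon+N(T)\ll 1$, producing (\ref{3.83}).

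The main difficulty I anticipate is bounding the forcing derivatives $G_{1\xi}$ and $G_{2\xi}$ in $L^2_{t,\xi}$, since they involve terms such as $\tilde\theta_{\xi\xi\xi}$ and products $\tilde\theta_\xi\tilde\theta_{\xi\xi}$ coming from the smoothed rarefaction wave; these must be controlled by the sharp decay estimates (\ref{3.16}) of $\tilde{z}$ combined with Sobolev embedding, in the same spirit as the treatment of (\ref{3.26}). Once those are shown to be of order $\bar{\delta}+\epsilon^{1/8}$, the final absorption closes the bootstrap cleanly and completes the proof of Proposition \ref{p2}.
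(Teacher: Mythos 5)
Your reduction of $\psi_{\xi\xi}$ through the energy equation is sound: differentiating $(\ref{3.2})_3$ in $\xi$ and dividing by $p$ does give $\|\psi_{\xi\xi}\|^2\lesssim\|\zeta_{\xi\xi\xi}\|^2+\|\zeta_{t\xi}\|^2+\|\zeta_{\xi\xi}\|^2+\cdots$ up to terms absorbable by $N(T)$, and all of these are genuinely dissipated by Lemmas 3.2, 3.4 and 3.6. The gap is in your second step. To extract $\phi_{\xi\xi}$ from the momentum equation you must pay with $\psi_{t\xi}$: indeed $\frac{\hat p}{v}\phi_{\xi\xi}=\psi_{t\xi}-s_-\psi_{\xi\xi}+\frac{R}{v}\zeta_{\xi\xi}+(\text{l.o.t.})$, so $\int_0^t\|\psi_{t\xi}\|^2d\tau$ and $\int_0^t\|\phi_{\xi\xi}\|^2d\tau$ are \emph{equivalent} modulo quantities already controlled. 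But $\int_0^t\|\psi_{t\xi}(\tau)\|^2d\tau$ is not available before Lemma 3.7: Lemma 3.5 only provides $\sup_t\|\psi_{t\xi}(t)\|^2$ and the boundary trace $\int_0^t\psi_{t\xi}^2(\tau,0)d\tau$, not the interior space--time integral. Its appearance on the left of (\ref{3.80}) is not actually produced by Lemmas 3.1--3.6 (and if one took (\ref{3.80}) literally, Lemma 3.7 would already follow by absorption and would be redundant). So your step 2 assumes what is to be proved, and no rearrangement of the algebra avoids this: eliminating $\psi_{t\xi}$ via the mass equation just reintroduces $\phi_{t\xi}=s_-\phi_{\xi\xi}+\psi_{\xi\xi}$, which is again the unknown quantity.

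This is precisely why the paper uses a cross (Matsumura--Nishida type) estimate rather than pointwise algebra: multiplying $(\ref{3.2})_{2\xi}$ by $-\frac{\hat p}{2}\phi_{\xi\xi}$ turns the dangerous product $\psi_{t\xi}\phi_{\xi\xi}$ into the exact time derivative $-(\frac{\hat p}{2}\phi_{\xi\xi}\psi_\xi)_t$ plus, via $\phi_{t\xi\xi}=s_-\phi_{\xi\xi\xi}+\psi_{\xi\xi\xi}$ and an integration by parts in $\xi$, a $-\frac{\hat p}{2}\psi_{\xi\xi}^2$ deficit together with boundary terms; adding $(\ref{3.2})_{3\xi}\times\psi_{\xi\xi}$ supplies $+\hat p\,\psi_{\xi\xi}^2$ and leaves the positive form $\frac{\hat p}{2}\psi_{\xi\xi}^2+\frac{\hat p^2}{2v}\phi_{\xi\xi}^2$ under $\int_0^t$, with the boundary contributions handled by (\ref{3.76}) and the traces already estimated in Lemmas 3.2 and 3.6. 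If you wish to keep part of your route, your step 1 can stand as an alternative control of $\int_0^t\|\psi_{\xi\xi}\|^2d\tau$, but step 2 must be replaced by such a cross-term (or compensating-function) argument; as written the proof does not close.
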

\begin{proof}
$(\ref{3.2})_{2\xi}$ multiplies $-\frac{\widehat{p}}{2}\phi_{\xi\xi}$, it holds that
\begin{eqnarray}\label{3.84}
\begin{aligned}
&-(\frac{\widehat{p}}{2}\phi_{\xi\xi} \psi_\xi )_t + (\frac{\widehat{p}}{2}\phi_{t\xi} \psi_\xi )_\xi- \frac{\widehat{p}_\xi}{2} \psi_\xi(s_-\phi_{\xi\xi}+\psi_{\xi\xi})
-\frac{\widehat{p}}{2}\psi^2_{\xi\xi}+\frac{\widehat{p}^2}{2 v}\phi^2_{\xi\xi}\\[2mm]
=&-\frac{\widehat{p}_t}{2}\phi_{\xi\xi}\psi_\xi+\big\{(\frac{R}{v}\zeta)_{\xi\xi}- (\frac{\widehat{p}}{v})_{\xi\xi}\phi-2(\frac{\widehat{p}}{v})_\xi\phi_\xi+G_{1\xi}\big\}\frac{\widehat{p}}{2}\phi_{\xi\xi}.
\end{aligned}
\end{eqnarray}
$(\ref{3.2})_{3\xi}$ multiplies $\psi_{\xi\xi}$, it holds that
\begin{eqnarray}\label{3.85}
\begin{aligned}
&(C_v\zeta_\xi \psi_{\xi\xi} )_t - (C_v\zeta_\xi\psi_{t\xi} )_\xi+\widehat{p}\psi^2_{\xi\xi}+\widehat{p}_\xi\psi_\xi\psi_{\xi\xi}\\[2mm]
=&\big\{\kappa (\frac{\zeta_\xi}{v}- \frac{\widehat{\theta}_\xi \phi}{\widehat{v} v})_\xi
-(p-\widehat{p}) u_\xi-G_2\big\}_\xi\psi_{\xi\xi}+C_v\zeta_{\xi\xi}\big[(p-\widehat{p})_\xi+G_1\big]_\xi.
\end{aligned}
\end{eqnarray}
Combining (\ref{3.84}) and (\ref{3.85}) together, we get
\begin{eqnarray}\label{3.86}
\begin{aligned}
&(C_v\zeta_\xi \psi_{\xi\xi}-\frac{\widehat{p}}{2}\phi_{\xi\xi} \psi_\xi )_t - (C_v\zeta_\xi\psi_{t\xi} -\frac{\widehat{p}}{2}\phi_{t\xi} \psi_\xi)_\xi+\frac{\widehat{p}}{2}\psi^2_{\xi\xi}+\frac{\widehat{p}^2}{2 v}\phi^2_{\xi\xi}\\[2mm]
=&-\frac{\widehat{p}_t}{2}\phi_{\xi\xi}\psi_\xi+\frac{\widehat{p}_\xi}{2} \psi_\xi(s_-\phi_{\xi\xi}-\psi_{\xi\xi})
+\big\{(\frac{R}{v}\zeta)_{\xi\xi}- (\frac{\widehat{p}}{v})_{\xi\xi}\phi-2(\frac{\widehat{p}}{v})_\xi\phi_\xi+G_{1\xi}\big\}\frac{\widehat{p}}{2}\phi_{\xi\xi}\\[2mm]
&+\big\{\kappa (\frac{\zeta_\xi}{v}- \frac{\widehat{\theta}_\xi \phi}{\widehat{v} v})_\xi
-(p-\widehat{p}) u_\xi-G_2\big\}_\xi\psi_{\xi\xi}+C_v\zeta_{\xi\xi}\big[(p-\widehat{p})_\xi+G_1\big]_\xi.
\end{aligned}
\end{eqnarray}
Integrating it over $[0,t]\times\mathbb{R}_+$, by  (\ref{3.76}), it holds that
\begin{eqnarray}\label{3.87}
\begin{aligned}
&\int^t_0\int_{\mathbb{R}_+}\|(\phi_{\xi\xi},\psi_{\xi\xi})(\tau)\|^2 d\tau\\[2mm]
\lesssim & \|(\phi_0,\psi_0,\zeta_0)\|_2^2+\|(\phi_t,\psi_t,\xi_t)(0)\|^2_1+\bar{\delta}+\eps^{\frac{1}{8}}
+  \int^t_0\int_{\mathbb{R}_+}|\widehat{u}_\xi||(\phi,\xi)|^2d\xi dt \\[2mm]
&+\int_0^t\|\zeta_{\xi}(\tau)\|_2^2d\tau+\int_0^t(\psi_\xi^2+\phi_{\xi\xi}^2+\psi_{\xi\xi}^2)(\tau,0)d\tau\\[2mm]
&+(\frac{1}{4}+\bar{\delta} +\eps^{\frac{1}{8}}+N(T))\int^t_0\|(\phi_\xi,\psi_\xi)(\tau)\|^2_1 d\tau.
\end{aligned}
\end{eqnarray}
Using the result of (\ref{3.9}), (\ref{3.15}) and (\ref{3.80}), we get (\ref{3.83}) and complete the proof of Lemma 3.7.
\end{proof}

Combining the results of Lemma 3.1-Lemma 3.7,   we finally get (\ref{3.6}) and complete the proof of Proposition 3.2.\\

 Acknowledgments: The authors are grateful to Professor S.Nishibata, Professors Feimin Huang and Huijiang Zhao with
their support and advices. This work was supported by the Fundamental Research
 grants from the  Science
Foundation of Hubei Province under the contract 2018CFB693.

\end{document}